\newtheorem{theorem}{Theorem}[section]
\newtheorem{lemma}[theorem]{Lemma}
\newtheorem{proposition}[theorem]{Proposition}
\newtheorem{remark}[theorem]{Remark}
\numberwithin{equation}{section}
\newtheorem{definition}[theorem]{Definition}
\newtheorem{example}[theorem]{Example}
\newtheorem{corollary}[theorem]{Corollary}
\begin{document}
\title[Initialized and ACM line bundles on  sextic surfaces]
{On initialized and ACM line bundles over a smooth sextic surface in $\mathbb P^3$}

\author[D. Bhattacharya]{Debojyoti Bhattacharya }
\address{Indian Institute of Science Education and Research,Thiruvananthapuram,
Maruthamala PO, Vithura,
Thiruvananthapuram - 695551, Kerala, India}
\email{debojyotibhattacharya15@iisertvm.ac.in}

\date{05.07.2021}

\begin{abstract}
Let $X \subset \mathbb P^3$ be a smooth hypersurface of degree $6$ over  complex numbers.  In this paper, we give a characterization of  initialized and ACM  bundles of rank $1$ on $X$ with respect to the line bundle given by a smooth hyperplane section of $X$.
\end{abstract}

\keywords{ACM line bundle, sextic hypersurface, curve, Clifford-index, gonality, $1$-connectedness}

\subjclass[2010]{14J70.14J60}



\maketitle\section{Introduction and the statement of the main result}

Let $X \subset \mathbb P^n$ be a smooth projective variety and $H$ be the very ample line bundle given by a smooth hyperplane section $C$ of $X$. If the coordinate ring of $X$ is Cohen-Macaulay, then a vector bundle $\mathcal F$ on $X$ is called \textit{arithmetically Cohen-Macaulay}(ACM) if it has no intermediate cohomology, i.e. if :\

$H^i(\mathcal F \otimes H^{\otimes t})=0$, for $1 \leq i\leq \text{dim}(X)-1$ and for all $t \in \mathbb Z$.\

It is always interesting to ask whether one can classify ACM bundles on a fixed variety. On a projective space $\mathbb P^n$, a higher rank vector bundle is ACM iff it's obtained as a direct sum of line bundles on $\mathbb P^n$(see \cite{Horrocks}). This is not necessarily true for more general polarized smooth projective varieties (e.g. hypersurfaces of degree $d\geq 2$ in $\mathbb P^n$). In this direction, considerable efforts have  been driven towards a classification of ACM bundles on hypersurfaces $X^{(d)} \subset \mathbb P^n$  of degree $d$. For $d=2$, ACM bundles are classified by Kn\"{o}rrer (see \cite{quadric}). For $d=3$, Casanellas and Hartshorne (see \cite{H3}) have constructed a $n^2 + 1$-dimensional family of rank $n$ indecomposable ACM vector bundles on $X^{(d)}$ with Chern classes $c_1 = nH$ and  $c_2 = \frac{1}{2}(3n^2-n)$ for $n \geq 2$ and Faenzi (see \cite{F3}) gave a complete classification of rank $2$ ACM bundles on $X^{(d)}$. For $d=4, n=3$, Coskun, Kulkarni and Mustopa have constructed a $14$-dimensional family of simple Ulrich bundles on $X^{(d)}$ of rank $2$ with $c_1 = H^{\otimes 3}$ and $c_2 = 14$, in the case where $X^{(d)}$ is a Pfaffian quartic surface (see \cite{CKM}). For $d=4,n=3$, Casnati has classified indecomposable  ACM bundles of rank $2$ on $X^{(d)}$, where $X^{(d)}$ is general determinantal (see \cite{Casnati}). For $d=5,n=3$, rank $2$ ACM  bundles on general such $X^{(d)}$ are classified by Chiantini and Faenzi (see \cite{CF5}). For $d=6,n=3$, rank $2$ ACM  bundles on general such $X^{(d)}$  are classified by M. Patnott (see \cite{MP6}). 


 In this context a characterization of ACM  bundles of rank $1$ on $X$ is useful for the construction of higher rank indecomposable ACM bundles on $X$. In this direction Pons-Llopis and Tonini have classified ACM line bundles on a DelPezzo
surface $X$ with respect to the anti-canonical line bundle on $X$ (see \cite{delpezzo}). F.Chindea has classified ACM line bundles on a complex polarized elliptic ruled surface (see \cite{Chindea}). Recently K.Watanabe has classified ACM line bundles on smooth quartic hypersurfaces in $\mathbb P^3$ (see \cite{W4}), on polarized $K3$ surfaces (see \cite{WP4}) and on smooth quintic hypersurfaces in $\mathbb P^3$ (see \cite{W5}). This motivates us to extend the study related to the classification of initialized (i.e., line bundles $\mathcal L$ with $H^0(X,\mathcal L) \neq 0, H^0(X,\mathcal L \otimes H^*) = 0$) and ACM line bundles on  smooth sextic hypersurfaces in $\mathbb P^3$. Our main result is as follows :\\

\begin{theorem}\label{T1}
Let $X \subset \mathbb P^3$ be a smooth sextic hypersurface. Let $H$ be the hyperplane class of $X$ and $C \in |H|$ be a smooth member. Let $D$ be a non-zero effective divisor on $X$ of arithmetic genus $P_a(D)$. Then $\mathcal O_X(D)$ is initialized and ACM if and only if the following conditions are satisfied :\\

\hbox{$(i)$ either $2C.D-9 \leq P_a(D) \leq 2C.D-4 $ or $P_a(D)= 2C.D-2$.}.







\hbox{$(ii)$ If $P_a(D)=2C.D-2$, then $C.D=1$.}.

\hbox{$(iii)$ If $P_a(D) =2C.D-4$, then the following occurs:}

\hbox{$(a)$ $C.D=2$ or $5 \leq C.D \leq 6$ or $12 \leq C.D \leq 15$.}

\hbox{$(b)$ If $C.D \in \{6,12,13,14,15\}$, then $h^0(\mathcal O_C(D-C)) =0$ and if $12 \leq C.D \leq 15$, then  $h^0(\mathcal O_C(D-2C)) =0$.}

\hbox{$(c)$ If $C.D=12$, then $h^0(\mathcal O_C(2C-D))=0$ and if   $C.D=15$, then $h^0(\mathcal O_C(D))=6$.}.

\hbox{$(iv)$ If $P_a(D) =2C.D-5$, then the following occurs :}

\hbox{$(a)$ $3 \leq C.D \leq 7$ or $10 \leq C.D \leq 14$.}

\hbox{$(b)$ If $C.D \in \{6,7, 10,11,12,13,14\}$, then $h^0(\mathcal O_C(D-C)) =0$ and for $C.D \in \{11,12\}$, $  h^0(\mathcal O_C(2C-D))=0$.}

\hbox{$(c)$ If $C.D=6$, then $(h^0(\mathcal O_C(D)),h^0(\mathcal O_C(2C-D)) \neq (3,3)$ and $h^0(\mathcal O_C(C-D))=0$.}

\hbox{$(d)$ If $12 \leq C.D \leq 14$, then $h^0(\mathcal O_C(D))=5$.}.

\hbox{$(v)$ If $P_a(D)=2C.D-6$, then the following occurs :}

\hbox{$(a)$ $3 \leq C.D \leq 13$.}

\hbox{$(b)$ If $6 \leq C.D \leq 13$, then $h^0(\mathcal O_C(D-C))=0$ and if $10 \leq C.D \leq 12$, then $h^0(\mathcal O_C(2C-D)) =0$.}

\hbox{$(c)$ If $3 \leq C.D \leq 4$, then $(h^0(\mathcal O_C(C-D)), h^0(\mathcal O_C(2C-D))) \neq (1,3)$ and $h^0(\mathcal O_C(C-D))=0$ for $C.D=6$.}


\hbox{$(d)$ If $C.D =7$, then  $(h^0(\mathcal O_C(D)), h^0(\mathcal O_C(2C-D))) \neq (3,2)$.}

\hbox{$(e)$ If $C.D=6$, then  $(h^0(\mathcal O_C(D)), h^0(\mathcal O_C(2C-D))) \neq (i,j)$, where $i,j \in \{2,3\}, i+j \in \{5,6\}$.}

\hbox{$(f)$ If $C.D =5$, then  $(h^0(\mathcal O_C(D)), h^0(\mathcal O_C(2C-D))) \neq (2,3)$ and  if  $11 \leq C.D \leq 13$, then $h^0(\mathcal O_C(D))=4$.}.




\hbox{$(vi)$ If $P_a(D)=2C.D-7$, then the following occurs:}

\hbox{$(a)$ $4 \leq C.D \leq 12$.}

\hbox{$(b)$ If $6 \leq C.D \leq 12$, then $h^0(\mathcal O_C(D-C)) =0$ and if $10 \leq C.D \leq 12$, then $h^0(\mathcal O_C(2C-D))=0$.}

\hbox{$(c)$ If $8 \leq C.D \leq 9$, then $(h^0(\mathcal O_C(D)), h^0(\mathcal O_C(2C-D))) \neq (3,1)$.}


\hbox{$(d)$ If $C.D=7$, then $(h^0(\mathcal O_C(D)), h^0(\mathcal O_C(2C-D))) \neq (i,j)$, where $i \in \{2,3\}, j\in \{1,2\}$, $i+j \in \{4,5\}$.}

\hbox{$(e)$ If $C.D=6$, then  $(h^0(\mathcal O_C(D)), h^0(\mathcal O_C(2C-D))) \neq (i,j)$, where $i,j \in \{1,2,3\}, i+j \in \{4,5,6\}$.}

\hbox{$(f)$ If $C.D=5$, then $(h^0(\mathcal O_C(D)), h^0(\mathcal O_C(2C-D))) \neq (i,j)$, where $i \in \{1,2\}, j\in \{2,3\}, i+j \in\{4,5\}$.}

\hbox{$(g)$ If $C.D=4$, then $h^0(\mathcal O_C(2C-D)) \neq 3$.}

\hbox{$(h)$ If $C.D \in \{4,6\}$, then $h^0(\mathcal O_C(C-D)) =0$ and if $10 \leq C.D \leq 12$, then $h^0(\mathcal O_C(D))=3$.}.


\hbox{$(vii)$ If $P_a(D)=2C.D-8$, then the following occurs :}

\hbox{$(a)$ $4 \leq C.D \leq 11$.}

\hbox{$(b)$ If $5 \leq C.D \leq 9$, then  $(h^0(\mathcal O_C(D)), h^0(\mathcal O_C(2C-D))) \neq (2,1)$ and if $6 \leq C.D \leq 9$, then $h^0(\mathcal O_C(D)) \neq 3$.}

\hbox{$(c)$ If $4 \leq C.D \leq 6$, then  $h^0(\mathcal O_C(C-D))=0$ and if $5 \leq C.D \leq 7$, then $h^0(\mathcal O_C(2C-D)) \notin \{2,3\}$.}

\hbox{$(d)$ If $C.D=4$, then $h^0(\mathcal O_C(2C-D)) =1$ and if $10 \leq C.D \leq 11$, then $(h^0(\mathcal O_C(D)), h^0(\mathcal O_C(2C-D))=(2,0)  $.}.



\hbox{$(viii)$ If $P_a(D)=2C.D-9$, then the following occurs :}

\hbox{$(a)$ $5 \leq C.D \leq 10$, $h^0(\mathcal O_C(D))=1$ and $h^0(\mathcal O_C(2C-D)) =0$.}

\hbox{$(b)$ If $5 \leq C.D \leq 6$, then $h^0(\mathcal O_C(C-D))=0$.}.



\end{theorem}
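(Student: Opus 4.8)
The plan is to reduce the ACM condition, which for the surface $X$ (of dimension $2$) is the single vanishing $H^1(X,\mathcal{O}_X(D+tC))=0$ for all $t\in\mathbb{Z}$, to a finite list of cohomological conditions on the hyperplane section $C$. First I would record the basic invariants: by adjunction $K_X=\mathcal{O}_X(2C)$, so $C$ is a smooth plane sextic with $g(C)=10$, $C^2=6$, with $\mathcal{O}_C(C)$ the $g^2_6$ and $K_C=\mathcal{O}_C(3C)$, while $\chi(\mathcal{O}_X)=11$ gives Riemann--Roch on $X$ in the form $\chi(\mathcal{O}_X(D))=10+P_a(D)-2\,C.D$ (using the $1$-connectedness of effective divisors to read off $P_a(D)$ from the structure sequence of $D$). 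The central device is the restriction sequence
$$0 \to \mathcal{O}_X(D+(t-1)C) \to \mathcal{O}_X(D+tC) \to \mathcal{O}_C(D+tC) \to 0,$$
from which one sees, by downward induction starting from Serre vanishing for $t\gg 0$, that $\mathcal{O}_X(D)$ is ACM if and only if the restriction map $r_t\colon H^0(\mathcal{O}_X(D+tC))\to H^0(\mathcal{O}_C(D+tC))$ is surjective for every $t$; equivalently, writing $a_t=h^0(\mathcal{O}_X(D+tC))$ and $b_t=h^0(\mathcal{O}_C(D+tC))$ and noting $\ker r_t=H^0(\mathcal{O}_X(D+(t-1)C))$, that $a_t-a_{t-1}=b_t$ for all $t$.

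Next I would translate the initialization hypothesis. Since $D$ is effective, $a_0\ge 1$, and $H^0(\mathcal{O}_X(D-C))=0$ means $a_{-1}=0$; combined with the ACM identity this already forces $h^0(\mathcal{O}_X(D))=h^0(\mathcal{O}_C(D))$ and pins each $a_t$ down as a partial sum of the $b_s$. Serre duality on $X$, namely $H^i(\mathcal{O}_X(D+tC))\cong H^{2-i}(\mathcal{O}_X((2-t)C-D))^{*}$, together with Serre duality on $C$, $h^1(\mathcal{O}_C(D+tC))=h^0(\mathcal{O}_C((3-t)C-D))$, then let me express every relevant quantity in terms of the finitely many numbers $h^0(\mathcal{O}_C(sC-D))$ and $h^0(\mathcal{O}_C(D+tC))$ for small $s,t$ — precisely the invariants $h^0(\mathcal{O}_C(D))$, $h^0(\mathcal{O}_C(D-C))$, $h^0(\mathcal{O}_C(D-2C))$, $h^0(\mathcal{O}_C(C-D))$, $h^0(\mathcal{O}_C(2C-D))$ that appear in the statement. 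Because $\deg\mathcal{O}_C(D+tC)=C.D+6t$, only the $t$ with $0\le C.D+6t\le 18$ require scrutiny, so the problem is genuinely finite.

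The arithmetic backbone is Riemann--Roch on $C$, $b_t-h^1(\mathcal{O}_C(D+tC))=C.D+6t-9$, fed into the ACM identity; eliminating the $a_t$ against $\chi$ on $X$ yields the coarse constraint that $\chi(\mathcal{O}_X(D))=10+P_a(D)-2\,C.D$ lie in a bounded window, which I expect to reproduce exactly the range $(i)$, that is $P_a(D)-2\,C.D\in\{-9,\dots,-4\}\cup\{-2\}$. To sharpen this to the sub-conditions $(ii)$--$(viii)$, the decisive input is the geometry of the plane sextic $C$: its Clifford index is $2$ and its gonality is $5$, so Clifford's theorem in the sharp form $h^0(M)-1\le\tfrac12\deg M$ for special $M$, refined to $\deg M-2(h^0(M)-1)\ge 2$ whenever $M$ contributes to the Clifford index, bounds each $h^0(\mathcal{O}_C(sC-D))$ and each $b_t$. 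The forbidden pairs such as $(h^0(\mathcal{O}_C(D)),h^0(\mathcal{O}_C(2C-D)))\neq(3,3)$ are precisely the numerical configurations that either break this Clifford bound or are incompatible with surjectivity of some $r_t$; isolating them, case by case on the value of $P_a(D)$, is the substance of the theorem.

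For each fixed value of $P_a(D)-2\,C.D$ I would run the same loop: determine the admissible range of $C.D$ from non-negativity of all the $a_t$, then use Clifford's theorem together with the explicit linear systems on the plane sextic (the multiples $\mathcal{O}_C(nC)$ with $h^0=\binom{n+2}{2}-\binom{n-4}{2}$, the gonality pencils, and the $g^2_6$) to decide which $h^0$-values on $C$ are actually attainable, discarding those that violate $a_t-a_{t-1}=b_t$. The converse reverses this bookkeeping: granting the numerical conditions, the same identities reconstruct $h^1(\mathcal{O}_X(D+tC))=0$ for all $t$ and $h^0(\mathcal{O}_X(D-C))=0$. The hard part will be the borderline cases where Clifford's inequality is an equality or near-equality: there one cannot argue numerically alone and must invoke the specific projective geometry of $C$ — the projective normality of its plane embedding and the base-point-free pencil trick — to establish or obstruct surjectivity of the restriction maps, and this is where the long and delicate portion of the case analysis will reside.
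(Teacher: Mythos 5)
Your skeleton is the paper's: reduce ACMness to surjectivity of the restriction maps $r_t\colon H^0(\mathcal O_X(D+tC))\to H^0(\mathcal O_C(D+tC))$ for a bounded window of $t$ (the paper packages this as Proposition \ref{suff}/Corollary \ref{C1}), record $\chi(\mathcal O_X(D))=P_a(D)-2C.D+10$, and run a case analysis on $P_a(D)-2C.D$ using bounds on linear systems on the plane sextic $C$. All your invariants ($K_X=2C$, $g(C)=10$, $\mathrm{gon}(C)=5$, $\mathrm{Cliff}(C)=2$, $K_C=\mathcal O_C(3C)$) are correct. But the plan has two concrete gaps. First, the toolkit you propose for the eliminations --- Clifford's theorem sharpened by $\mathrm{Cliff}(C)=2$, plus the multiples of $\mathcal O_C(1)$, the gonality pencils and ``projective normality/base-point-free pencil trick'' for borderline cases --- is not enough. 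Several exclusions require the Coppens--Kato/Hartshorne bound for arbitrary $g^r_n$ on a plane sextic (Proposition \ref{H1}, Corollary \ref{wH1}): e.g.\ a $g^3_9$ or a $g^4_{10}$ satisfies $\deg-2r\ge 2$ and so is not ruled out by the Clifford index, yet must be excluded to kill the cases $C.D\in\{8,9\}$ with $P_a(D)=2C.D-5$ and $C.D=10$ with $h^0(\mathcal O_C(D))=5$ in Case~(i). Other exclusions are not curve-theoretic at all: e.g.\ $P_a(D)=2C.D-2$ with $C.D=11$ is killed because a divisor $\Gamma$ on $X$ with $C.\Gamma=1$ and $h^0(\mathcal O_X(\Gamma))>0$ forces $h^0(\mathcal O_X(\Gamma))=1$ and $\Gamma^2\ge -4$ (Lemma \ref{L2}, applied to $2C-D$), a surface-level fact invisible from $C$.

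Second, and more seriously, the sufficiency direction does not simply ``reverse the bookkeeping.'' The vanishing $h^1(\mathcal O_X(-D))=0$ (equivalently $h^1(\mathcal O_X(D+2C))=0$) requires $h^0(\mathcal O_X(C-D))=h^0(\mathcal O_C(C-D))$, and the numerical hypotheses of the theorem do not determine the surface-level quantity $h^0(\mathcal O_X(C-D))$ in the low-degree cases where $h^0(\mathcal O_C(C-D))$ need not vanish (namely $C.D=5$ with $P_a(D)\in\{2C.D-5,2C.D-6,2C.D-7\}$ and $C.D=6$ with $P_a(D)=2C.D-4$). The paper needs a separate mechanism here (Proposition \ref{VAN}): if $h^1(\mathcal O_X(-D))\neq 0$ then $D$ is not $1$-connected, and a Riemann--Roch estimate on a decomposition $D=D_1+D_2$ with $D_1.D_2\le 0$ yields a contradiction. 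Relatedly, your parenthetical appeal to ``the $1$-connectedness of effective divisors'' is false as a blanket statement --- effective divisors on $X$ need not be $1$-connected, which is exactly why this extra argument is unavoidable --- though it is harmless where you use it, since $P_a(D)$ is defined by adjunction.
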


\subsection{The plan of the paper}

Our plan of the paper is as follows :

Section \ref{S2} is dedicated to showcase the notions and results that will be useful in the next section. This section can be roughly seen as divided into three parts. In the first part, we begin by recalling definitions and results concerning some important invariants of line bundles and smooth plane curves. Here we also document results related to the existence  of certain divisors on smooth plane curves. The second part  deals with the results on line bundles over a smooth hypersurface in $\mathbb P^3$. To be precise, after mentioning basic facts on line bundles on smooth hypersurfaces, we note down  a sufficient condition for the non-negativity of arithmetic genus of an effective divisor on surfaces of degree $d \geq 4$. We then give a  characterization of non-zero effective divisors  on a smooth sextic surface having $C.D=1$ and $2$ respectively, where $C$ is a smooth hyperplane section of the surface. In the third part,  we  first define initializedness and ACMness. Consequently, we document a result concerning an upperbound of global sections of such bundles on a smooth sextic.  After mentioning a brief remark about the characterization of Ulrich line bundles on a smooth sextic surface we move onto preparing a sufficient condition for ACMness of a divisor on a smooth surface of degree $d$. We end this section by documenting the definition and a useful result on $1$-connected divisor.\


In section \ref{S3}, we study necessary and sufficient condition for a line bundle given by a non-zero effective divisor  on a smooth sextic hypersurface to be initialized and ACM. In particular, towards the necessary condition we first study the possibilities in terms of $C.D$ and arithmetic genus $P_a(D)$. This is one of the most important steps towards proving Theorem \ref{T1} and a major part of this article is devoted to obtain a optimal picture of possibilities of arithmetic genus and intersection numbers through a detailed case by case analysis (cf. Theorem \ref{P3.1}). Here we mainly use the  tools showcased in section \ref{S2}, part $1$ and $2$. We then equip the already obtained necessary condition with some cohomological restrictions to obtain the only if part of the promised Theorem \ref{T1}.  Then  initializedness is established for the divisors satisfying any of those conditions. Before proving ACMness, using the techniques related to $1$-connectedness, we prove a proposition regarding vanishing of $h^1(\mathcal O_X(-D))$ for some divisors (see Proposition \ref{VAN}). This proposition coupled with techniques from section \ref{S2} enables us to prove ACMness for the divisors satisfying any of the conditions mentioned in the promised Theorem \ref{T1}. We mention that Section \ref{S2} third part plays the most crucial role in proving ACMness. We end this section by pointing out a concrete situation of the existence of a non-ACM line bundle on some smooth sextic surface.\\


 

\section*{Notations and Conventions}
All curves and surfaces are always smooth, projective over $\mathbb C$. We denote the canonical bundle of a smooth variety $X$ to be $K_X$. For a divisor (or the corresponding line bundle)  $\mathcal L$ on a variety $X$, we denote by $|\mathcal L|$  the linear system of $\mathcal L$ and denote the dual of a line bundle $\mathcal L$ by $\mathcal L^*$. In this context, $\text{deg}(\mathcal L)$ will always stand for the degree of the line bundle $\mathcal L$. For a divisor $\Delta$ on a curve $C$, $r(\Delta):= h^0(\mathcal O_C(\Delta))-1$ is the projective dimension of the complete linear system $|\Delta|$. A linear system on (a curve) $C$ having degree $d$ and projective dimension $r$ is denoted by $g^r_d$.\


We denote  $X^{(d)}$ by a smooth hypersurface of degree $d$ in $\mathbb P^3$. For a hyperplane section $C$ of $X^{(d)}$, we denote the class of it in $\text{Pic}(X^{(d)})$ by $H$. For an integer $t$, $H^{\otimes t}$ is  denoted as $\mathcal O_{X^{(d)}}(t)$. For a vector bundle $\mathcal E$ on $X^{(d)}$, we will write $\mathcal E \otimes \mathcal O_{X^{(d)}}(t) =\mathcal E(t)$.\ 

For a real number $x$, we denote by $[x]$ the greatest integer less than or equal to $x$.\

By S.E.S, we mean short exact sequence.\

\section{Technical Preliminaries}\label{S2}

In this section we will recall and establish few preliminary results that we need in the  next section. We begin by recalling the definitions of some important invariants of  smooth projective curves of genus  $g \geq 1$.\

\begin{definition}(gonality)
Let $C$ be a  curve of genus $g \geq 1$. The minimal degree of surjective morphisms from $C$ to $\mathbb P^1$ is called the gonality of $C$, and denoted by $\text{gon}(C)$ :
\begin{align*}
\text{gon}(C) :&= \text{min}\{\text{deg}(f) \mid f : C \to \mathbb P^1, \text{a surjective morphism}\}\\
&=  \text{min}\{d  \mid \exists \hspace{1mm} g^1_d \hspace{1mm} \operatorname{on} \hspace{1mm} C\}
\end{align*}
A curve of gonality $k$ is called a $k$-gonal curve. \
\end{definition}

The gonality is a classical invariant. We now mention another important invariant of smooth curves of genus $g \geq 1$, which is known as the Clifford index of a curve. Before recalling its definition, we note down a crucial motivating theorem:\

\begin{theorem}\label{CT}(Clifford's Theorem)
Let $\Delta$ be an  effective  divisor on a curve $C$. If $\Delta$ is special, i.e. $h^1(\mathcal O_C(\Delta)) \neq 0$, then $r(\Delta) \leq \frac{\text{deg}(\Delta)}{2}$. Furthermore, equality occurs if and only if either $\Delta =0$ or $\Delta = K_C$ or $C$ is hyperelliptic and $\Delta$ is a multiple of the unique $g^1_2$ on $C$.
\end{theorem}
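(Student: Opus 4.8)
The statement is the classical theorem of Clifford, so the plan is to reproduce its standard proof, organised around the sub-additivity of the invariant $r$ under addition of effective divisors. First I would use Serre duality to translate the hypothesis that $\Delta$ is special: $h^1(\mathcal O_C(\Delta)) = h^0(\mathcal O_C(K_C - \Delta)) \neq 0$ means that $K_C - \Delta$ is linearly equivalent to an effective divisor $E$. Thus both $\Delta$ and $E$ are effective and $\Delta + E \sim K_C$. Riemann--Roch then records the identity
\[
r(\Delta) - r(K_C - \Delta) = \deg(\Delta) - g + 1,
\]
where $g$ is the genus; this is the only arithmetic input, and it will be combined at the end with a purely geometric inequality.

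The geometric core is the following lemma, which I would prove first: for any two effective divisors $D, E$ on $C$ one has $r(D) + r(E) \le r(D + E)$. The proof is by induction on $r(D)$. If $r(D) = 0$, then multiplication by a section of $\mathcal O_C(D)$ cutting out $D$ gives an injection $H^0(\mathcal O_C(E)) \hookrightarrow H^0(\mathcal O_C(D+E))$, so $r(E) \le r(D+E)$. For the inductive step, the same injection $H^0(\mathcal O_C(D)) \hookrightarrow H^0(\mathcal O_C(D+E))$ shows $r(D+E) \ge r(D) \ge 1$, so both $|D|$ and $|D+E|$ are positive-dimensional with finite base loci; I would pick a point $p \in C$ lying in neither base locus, so that $r(D-p) = r(D) - 1$ and $r(D+E-p) = r(D+E) - 1$, with $D-p$ still effective. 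Applying the inductive hypothesis to $D - p$ and $E$ gives $r(D) - 1 + r(E) \le r(D+E) - 1$, which is the claim.

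Applying this lemma with $E = K_C - \Delta$ yields $r(\Delta) + r(K_C - \Delta) \le r(K_C) = g - 1$. Adding this inequality to the Riemann--Roch identity above eliminates $r(K_C - \Delta)$ and gives $2\,r(\Delta) \le \deg(\Delta)$, the asserted bound.

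It remains to analyse equality, which I expect to be the main obstacle. The cases $\Delta = 0$ and $\Delta = K_C$ obviously realise equality, so the real work is the converse: assuming $2\,r(\Delta) = \deg(\Delta)$ with $\Delta \neq 0, K_C$, I must produce a $g^1_2$ and show $\Delta$ is a multiple of it. First I would rule out the boundary degrees, since $\deg(\Delta) = 0$ forces $\Delta = 0$, while $\deg(\Delta) = 2g-2$ together with $|\Delta| \subseteq |K_C|$ of equal dimension forces $\Delta \sim K_C$; hence $0 < \deg(\Delta) < 2g-2$ and both $r(\Delta) \ge 1$ and $r(K_C - \Delta) \ge 1$. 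The delicate point is that equality in Clifford's bound forces equality throughout the induction in the lemma, and this rigidity constrains the base-point behaviour of $|\Delta|$ very strongly. I would exploit this by a reduction argument: peeling off a suitable pair of points $p+q$ from $\Delta$ so as to preserve equality while dropping $r$ by one, thereby reducing inductively to the case $r(\Delta) = 1$, $\deg(\Delta) = 2$, i.e.\ a genuine $g^1_2$, which exhibits $C$ as hyperelliptic; tracking the reduction backwards then identifies $\Delta$ with the appropriate multiple of that pencil. Making the choice of $p+q$ precise, and verifying that the reduced divisor again satisfies equality rather than merely the inequality, is where the argument requires the most care.
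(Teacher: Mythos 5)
The paper does not prove this statement at all: it simply cites Hartshorne, \emph{Algebraic Geometry}, Theorem IV.5.4, so the only meaningful comparison is with that standard proof. Your argument for the inequality is complete and correct, and it is essentially the textbook argument: Serre duality produces an effective $E$ with $\Delta + E \sim K_C$, the subadditivity $r(D) + r(E) \le r(D+E)$ for effective divisors (proved by your induction on $r(D)$, removing a point outside the two finite base loci) gives $r(\Delta) + r(K_C - \Delta) \le g-1$, and adding Riemann--Roch yields $2r(\Delta) \le \deg(\Delta)$. Up to that point there is nothing to object to.

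The equality case, however, contains a genuine gap, and it is exactly the one you flag yourself. The claim that one can ``peel off a suitable pair of points $p+q$ from $\Delta$ so as to preserve equality while dropping $r$ by one'' does not follow from the subadditivity lemma: that lemma only controls the \emph{inequality}, and removing two general points of $\Delta$ typically destroys equality (indeed, if it always worked one could descend on \emph{any} curve and conclude every curve with a nontrivial equality case is hyperelliptic, which is the whole content of the theorem, so the descent step cannot be free). The standard repair is different in structure: one takes $\Delta$ of \emph{minimal degree} among divisors $\neq 0, K_C$ achieving equality, and shows $\deg \Delta = 2$ by choosing $E \in |K_C - \Delta|$, points $P \in \operatorname{supp}E$, $Q \notin \operatorname{supp}E$, a member $D' \in |\Delta|$ through $P$ and $Q$, and applying the refined inequality $\dim|D'| + \dim|E| \le \dim|D' \wedge E| + \dim|D' \vee E|$ (Hartshorne's Lemma IV.5.5, where $\wedge$ and $\vee$ denote the pointwise min and max of the two effective divisors) to the divisor $D' \wedge E$, which is nonzero, of strictly smaller degree, and again attains equality --- contradicting minimality. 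This refined lemma is the missing ingredient; your simple subadditivity is too coarse to force the rigidity you need. Separately, even once $C$ is known to be hyperelliptic, the assertion that $\Delta$ is a multiple of the $g^1_2$ requires its own argument (via the structure of special divisors on a hyperelliptic curve, e.g.\ geometric Riemann--Roch on the rational normal curve that is the canonical image); ``tracking the reduction backwards'' is not available because the reduction was never constructed. In short: inequality proved, equality case not.
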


\begin{proof}
See \cite{AG}, Theorem $5.4$.\
\end{proof}

In the context of Theorem \ref{CT}, we recall the definition of the Clifford index.\

\begin{definition}(Clifford index)\

$(i)$ Let $\Delta$ be an effective divisor on $C$. The Clifford index of $\Delta$ is defined by $\text{Cliff}(\Delta):= \text{deg}(\Delta)-2r(\Delta)$.\

$\Delta$ is said to contribute to the Clifford index if both $h^0(\mathcal O_C(\Delta)) \geq 2$ and $h^1(\mathcal O_C(\Delta)) \geq 2$ hold.\

$(ii)$ We define the Clifford index of a  curve of genus $g \geq 1$ as :\

$\text{Cliff}(C) := \text{min}\{\text{Cliff}(\Delta) \mid \Delta \hspace{1mm} \text{contributes  to the Clifford index}\}$.\

\end{definition}


The following classical result gives us the exact formula for the gonality and Clifford index of a smooth plane curve of degree $d \geq 5$ in terms of its degree $d$.\

\begin{lemma}\label{gon}[\cite{TH}, Lemma $0.3$]
Let $C$ be a smooth plane curve of degree $d \geq 5$. Then the gonality and the Clifford index of $C$ is determined only by the degree of $C$:
\begin{align*}
\text{gon}(C)=d-1, \text{Cliff}(C)=d-4.
\end{align*}
\end{lemma}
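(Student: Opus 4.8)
The plan is to establish the two formulas $\operatorname{gon}(C) = d-1$ and $\operatorname{Cliff}(C) = d-4$ for a smooth plane curve $C \subset \mathbb P^2$ of degree $d \geq 5$. Since this is cited from \cite{TH}, the natural approach is to reconstruct the classical argument, which rests on the geometry of the projection maps from $C$ together with the adjunction-theoretic fact that the canonical series of a smooth plane curve is cut out by curves of degree $d-3$. First I would record the genus $g = \binom{d-1}{2}$ and the key observation that $K_C = \mathcal O_C(d-3)$, i.e. the canonical embedding is governed by the restriction of $\mathcal O_{\mathbb P^2}(d-3)$.

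For the gonality, the plan is a two-sided estimate. For the upper bound $\operatorname{gon}(C) \leq d-1$, I would exhibit an explicit pencil: projecting $C$ from a point $p \in C$ gives a morphism $C \to \mathbb P^1$ of degree $d-1$ (the $d$ lines through $p$ meet $C$ in $d$ points, one of which is $p$ itself with multiplicity absorbed by the tangent direction), so the divisors cut by lines through $p$ minus the base point $p$ form a $g^1_{d-1}$. For the lower bound $\operatorname{gon}(C) \geq d-1$, I would invoke the fact, attributable to the theory of plane curves, that any $g^1_k$ on $C$ with $k \leq d-2$ must be cut out by a pencil of lines (since a linear series of small degree on a plane curve is forced to be contained in the restriction of lines), and no pencil of lines gives degree less than $d-1$ after removing base points; this rules out gonality below $d-1$. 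The clean way to phrase the lower bound is via the classical result that a base-point-free $g^1_k$ embeds $C$ birationally so as to force $k \geq d-1$ by degree considerations on the image.

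For the Clifford index, I would use the relation between gonality and Clifford index together with Lemma-style inequalities: in general $\operatorname{gon}(C) - 3 \leq \operatorname{Cliff}(C) \leq \operatorname{gon}(C) - 2$, so the gonality computation immediately pins $\operatorname{Cliff}(C)$ to be either $d-4$ or $d-3$. To resolve the ambiguity and conclude $\operatorname{Cliff}(C) = d-4$, I would produce an explicit divisor computing the Clifford index: taking $\Delta$ to be a plane-curve divisor such as the one cut by a conic or by an appropriate low-degree curve (concretely, a divisor of the form cut by lines through two points, or the $g^2_d$ given by $\mathcal O_C(1)$ itself), I compute $\operatorname{Cliff}(\Delta) = \deg(\Delta) - 2r(\Delta)$ and check it equals $d-4$ while verifying that $\Delta$ genuinely contributes to the Clifford index, i.e. $h^0 \geq 2$ and $h^1 \geq 2$. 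The restriction $\mathcal O_C(1)$ giving a $g^2_d$ is the cleanest candidate: here $\deg = d$, $r = 2$, so $\operatorname{Cliff} = d-4$, and one checks $h^1(\mathcal O_C(1)) = g - (d-2) \geq 2$ for $d \geq 5$.

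The main obstacle I expect is the lower bound on gonality, equivalently the sharp half of the Clifford index computation — proving that no pencil of degree $< d-1$ and no divisor of Clifford index $< d-4$ exists. This is precisely where the special geometry of plane curves enters and cannot be replaced by the general Brill--Noether or Clifford bounds, which are too weak; one genuinely needs a structural result (of the type in \cite{TH}) saying that low-degree linear series on a smooth plane curve are cut out by the linear system of lines, so that their degrees and dimensions are constrained by the projective embedding. Since the statement is quoted as a known lemma from \cite{TH}, in the paper itself this lower bound would simply be cited rather than reproved; my reconstruction above indicates how one would fill it in if needed.
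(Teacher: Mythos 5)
The paper does not prove this lemma at all --- it is quoted directly from Harui \cite{TH}, Lemma~$0.3$ --- so there is no in-paper argument to compare against; your reconstruction is the standard proof and is correct in outline. The upper bound via projection from a point of $C$, the identification $K_C=\mathcal O_C(d-3)$, and the verification that $\mathcal O_C(1)$ is a $g^2_d$ contributing to the Clifford index with $\text{Cliff}(\mathcal O_C(1))=d-4$ and $h^1(\mathcal O_C(1))=g-d+2\geq 2$ are all accurate. The one step you rightly flag as the genuine content --- that $C$ carries no $g^1_k$ with $k\leq d-2$ --- does not in fact need to be imported from outside this paper: it follows from Proposition~\ref{H1}, which is already quoted here. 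Indeed, for $1\leq n\leq d-2$ one writes $n=kd-e$ with $k=1$ and $e=d-n\geq 2$; if $e>k+1$ the proposition gives $r\leq\frac{(k-1)(k+2)}{2}=0$, and if $e=k+1=2$ it gives $r\leq\frac{k(k+3)}{2}-e=0$, so no $g^1_n$ exists and $\text{gon}(C)\geq d-1$. The only mild caveat in your sketch is that the inequality $\text{gon}(C)-3\leq\text{Cliff}(C)$ is itself the nontrivial Coppens--Martens theorem rather than a formal consequence of the definitions; it is legitimate to invoke, but one can alternatively obtain the lower bound $\text{Cliff}(C)\geq d-4$ directly from Proposition~\ref{H1} by the same kind of case analysis on $k$ and $e$, staying entirely within tools the paper already assumes.
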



The following general remark helps estimate an upper bound of the dimension of global sections of line bundles on a  smooth hyperplane section of $X^{(d)}$. \

\begin{remark}\label{R1}
let $C$ be a smooth hyperplane section of $X^{(d)}$. Then  $|\mathcal O_C(1)|$ gives an embedding $C \hookrightarrow \mathbb P^2$. Since
$C$ is a plane curve of degree $d$, by Lemma \ref{gon}, $C$ is a $(d-1)$-gonal curve. Therefore, it can be established by induction that, if $\mathcal L$ is a line bundle on $C$ satisfying $h^0(\mathcal L) \geq 2$, then $\text{deg}(\mathcal L) \geq h^0(\mathcal L) +(d-1) -2$, i.e. $\text{deg}(\mathcal L) \geq h^0(\mathcal L)+d-3$. In particular, for $d=6$ one has, if $\mathcal L$ is a line bundle on $C$ satisfying $h^0(\mathcal L) \geq 2$, then  $\text{deg}(\mathcal L) \geq h^0(\mathcal L)+3$. Note that, for $d=5$, we recover \cite{W5}, Remark $2.1$.\
\end{remark}

We now mention an interesting result which deals with the existence(/non-existence) of certain line bundles on smooth plane curves of degree $d$. We will extensively use this result in the next section.\


\begin{proposition}\label{H1}
Let $g^r_n$ be a linear system on $C$(not necessarily very special). Write $g(C)=\frac{(d-1)(d-2)}{2}$.\

$(i)$ If $n >d(d-3)$, then $r=n-g$ (the non-special case).\

$(ii)$ If $n \leq d(d-3)$, then write $n=kd-e$ with $0 \leq k \leq d-3, 0 \leq e <d$, one has\

$\begin{cases} r \leq \frac{(k-1)(k+2)}{2}, &\text{ if } e >k+1 \\  r \leq \frac{k(k+3)}{2}-e, &\text{ if } e \leq k+1 \end{cases}$
\end{proposition}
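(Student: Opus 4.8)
The plan is to first reduce to complete linear systems and then treat the two parts separately. Any $g^r_n$ sits inside the complete linear system $|L|$ of the line bundle $L$ of degree $n$ it defines, with $r\le r(L)=h^0(L)-1$; so it suffices to bound $r(\mathcal O_C(\Delta))$ for an effective divisor $\Delta$ of degree $n$, and I assume the system is complete from now on. By adjunction $K_C=\mathcal O_C(d-3)$, whence $\deg K_C=2g-2=d(d-3)$. This immediately settles part $(i)$: if $n>d(d-3)=2g-2$ then $\deg(K_C-\Delta)<0$, so $h^1(\mathcal O_C(\Delta))=h^0(\mathcal O_C(K_C-\Delta))=0$ and Riemann--Roch gives $h^0(\mathcal O_C(\Delta))=n-g+1$, i.e. $r=n-g$.

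For part $(ii)$ I would compare $|\Delta|$ against the complete systems $|\mathcal O_C(j)|$ cut out by plane curves (recall $\frac{k(k+3)}{2}=\binom{k+2}{2}-1=\dim|\mathcal O_{\mathbb P^2}(k)|$ and $\frac{(k-1)(k+2)}{2}=\dim|\mathcal O_{\mathbb P^2}(k-1)|$, since $h^0(\mathcal O_C(j))=\binom{j+2}{2}$ for $0\le j\le d-1$). Fix a general line $\ell$ meeting $C$ transversally in the reduced divisor $D=C\cap\ell\sim H$ of degree $d$, avoiding base points, and consider the descending filtration $\mathcal O_C(\Delta)\supset\mathcal O_C(\Delta-H)\supset\cdots\supset\mathcal O_C(\Delta-kH)$. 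For each $j$ the sequence
\[ 0\to\mathcal O_C(\Delta-(j+1)H)\to\mathcal O_C(\Delta-jH)\to\mathcal O_C(\Delta-jH)\otimes\mathcal O_D\to 0 \]
identifies the increment $c_j:=h^0(\mathcal O_C(\Delta-jH))-h^0(\mathcal O_C(\Delta-(j+1)H))$ with the rank of the evaluation map of $H^0(\mathcal O_C(\Delta-jH))$ at the $d$ collinear points of $D$. Since $\deg(\Delta-kH)=-e\le 0$, the filtration telescopes to $h^0(\mathcal O_C(\Delta))=\sum_{j=0}^{k-1}c_j$ when $e\ge 1$, with an extra contribution $h^0(\mathcal O_C(\Delta-kH))\le 1$ when $e=0$.

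The core estimate is $c_j\le (k-j)+1$. Heuristically the image of the evaluation map consists of the values on the $d$ collinear points of $D$ of sections of a bundle of ``degree $\approx (k-j)d$''; restricted to the line $\ell\cong\mathbb P^1$ these behave like forms of degree $k-j$, which take at most $(k-j)+1$ independent values on collinear points. Making this rigorous for an \emph{arbitrary} $\Delta$ (not just $\mathcal O_C(k-j)$) is where the generality of $\ell$ enters, through a uniform-position argument governing the module $M/\bar\lambda M$ over the coordinate ring of $D$, whose Hilbert function is $\min(i+1,d)$. Granting it, $\sum_{j=0}^{k-1}\big((k-j)+1\big)=\frac{k(k+3)}{2}$, so $r\le\frac{k(k+3)}{2}-1$ when $e\ge 1$ and $r\le\frac{k(k+3)}{2}$ when $e=0$; this already gives the stated bound for $e\le 1$.

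To extract the full $-e$ correction and the dichotomy I would refine the bottom of the filtration, where $\deg(\Delta-(k-1)H)=d-e$ is small: imposing the $d$ collinear points of $D$ on a system of such low degree further depresses the last increments, cutting the total by $e-1$ beyond the generic count and yielding $r\le\frac{k(k+3)}{2}-e$ precisely while $e\le k+1$. Once $e>k+1$ this saturates (one can lose at most the top increment's worth, $k+1$, so $\frac{k(k+3)}{2}-(k+1)=\frac{(k-1)(k+2)}{2}$), and comparing instead with $|\mathcal O_C(k-1)|$ augmented by $d-e$ base points shows the constant bound $\frac{(k-1)(k+2)}{2}$ is the right one. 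I expect the main obstacle to be exactly this boundary analysis: rigorously establishing $c_j\le(k-j)+1$ via uniform position, and then isolating the exact $e$-dependence that separates the cases $e\le k+1$ and $e>k+1$. The extremal systems --- degree-$k$ plane curves through $e$ general points, respectively degree-$(k-1)$ curves plus base points --- realize equality and guide the bookkeeping throughout.
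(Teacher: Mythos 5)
The paper does not prove this proposition at all --- it simply cites Hartshorne (\emph{Generalized divisors on Gorenstein curves and a theorem of Noether}, Thm.~2.1) and Coppens--Kato, so there is no internal argument to compare against. Judged on its own terms, your part $(i)$ and the reduction to complete systems are fine ($\deg K_C=d(d-3)=2g-2$, so $n>d(d-3)$ forces non-speciality). But part $(ii)$ has a genuine gap: the ``core estimate'' $c_j\le(k-j)+1$ is \emph{false} as stated. Take $d=6$, so $g=10$, and let $\Delta$ be a general effective divisor of degree $n=17=3\cdot 6-1$, so $k=3$, $e=1$. Then $\Delta$ and $\Delta-H$ are both non-special (their Serre-dual bundles $3H-\Delta$ and $4H-\Delta$ have degrees $1$ and $7$ and are general in their Picard varieties, hence have no sections), so $h^0(\mathcal O_C(\Delta))=8$, $h^0(\mathcal O_C(\Delta-H))=2$, and $c_0=6=d>k+1=4$. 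The heuristic ``a bundle of degree $\approx(k-j)d$ restricted to $\ell$ behaves like a form of degree $k-j$'' is valid only for the restrictions $\mathcal O_C(k-j)$ of plane curves, not for an arbitrary line bundle of that degree: a non-special $\Delta$ with $h^1(\Delta-H)=0$ has evaluation map of full rank $d$ at \emph{any} $d$ points, collinear or not. (The final bound survives in this example only because the later increments collapse --- $c_1=c_2=1$ --- which is exactly the compensation your term-by-term bookkeeping cannot see.)

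The standard repairs are precisely what your sketch omits. One must split off the non-special case, where Riemann--Roch gives $r=n-g$ and a direct computation (the quadratic $k^2-(2d-3)k+(d-1)(d-2)=(k-(d-1))(k-(d-2))\ge 0$ for $k\le d-3$) shows the stated bound holds with room to spare; the filtration/uniform-position analysis is then needed only for special $\Delta$, where one exploits $h^1(\mathcal O_C(\Delta))=h^0(\mathcal O_C((d-3)H-\Delta))\ne 0$ and residuation with respect to adjoint curves to control the increments. Your proposal never makes this split, asserts a pointwise inequality that fails, and handles both the $-e$ correction and the $e>k+1$ dichotomy only with ``I would refine\ldots'' and ``I expect\ldots''. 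As written it is a plausible plan with the hardest step both unproved and, in the form claimed, incorrect; the honest course here is to do what the paper does and cite Hartshorne or Coppens--Kato, or else carry out the special/non-special dichotomy in full.
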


\begin{proof}
See \cite{H6} Theorem $2.1$ or \cite{KC} Remark $1.3$.
\end{proof}

\begin{corollary}\label{wH1}
On a smooth plane sextic curve $C$, let $g^r_n$ be a linear system on $C$, then \


$(i)$ If $n=15$, then $r \leq 6$.\

$(ii)$ If $12 \leq n \leq 14$, then $r \leq 5$.\

$(iii)$ If $n=11$, then $r \leq 4$.\

$(iv)$ If $n=10$, then $r \leq 3$.\

$(v)$ If $7 \leq n \leq 9$, then $r \leq 2$.\





\end{corollary}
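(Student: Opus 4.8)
The plan is to derive everything directly from Proposition \ref{H1} by specializing to $d=6$. For a smooth plane sextic curve $C$ we have $g(C)=\frac{(d-1)(d-2)}{2}=10$ and $d(d-3)=18$. Since every value $n$ appearing in the statement satisfies $7\le n\le 15\le 18$, each case falls under part $(ii)$ of Proposition \ref{H1}, so the non-special estimate $(i)$ never intervenes. Thus the entire corollary reduces to plugging into the two explicit bounds of part $(ii)$.

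The first step is to record, for each $n$, the unique representation $n=6k-e$ subject to $0\le k\le 3$ and $0\le e<6$. Uniqueness is immediate: the requirement $e=6k-n\in[0,6)$ forces $k$ to be the unique integer in the half-open interval $[n/6,\,n/6+1)$, so that $k=\lceil n/6\rceil$ and $e=6k-n$ are determined. The second step is then to apply the appropriate branch, using $r\le\frac{(k-1)(k+2)}{2}$ when $e>k+1$ and $r\le\frac{k(k+3)}{2}-e$ when $e\le k+1$.

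Carrying this out gives all five clauses. For $n=15$ we get $(k,e)=(3,3)$ with $e\le k+1$, hence $r\le 9-3=6$; for $n=14$ we get $(k,e)=(3,4)$ with $e\le k+1$ (the boundary case $e=k+1$), giving $r\le 9-4=5$; for $n=13$ we get $(k,e)=(3,5)$ with $e>k+1$, giving $r\le\frac{2\cdot 5}{2}=5$; for $n=12$ we get $(k,e)=(2,0)$, giving $r\le 5$; for $n=11$ we get $(k,e)=(2,1)$, giving $r\le 5-1=4$; for $n=10$ we get $(k,e)=(2,2)$, giving $r\le 5-2=3$; and for $n\in\{7,8,9\}$ the pairs are $(2,5),(2,4),(2,3)$, yielding $r\le 2$ in every case (the first two via the $e>k+1$ branch with bound $\frac{1\cdot 4}{2}=2$, the last via $5-3=2$). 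These match the stated bounds, and the ranges $12\le n\le 14$ and $7\le n\le 9$ collapse into single estimates precisely because the resulting bound is constant across each range.

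The computation is entirely mechanical, so there is no substantial obstacle; the only points demanding care are the boundary $e=k+1$ (occurring at $n=14$), where one must confirm that the second branch applies, and the selection of the correct representative near the top of the range, where a naive choice of $k$ could slip (e.g. $n=12$ must be written as $6\cdot 2-0$, not $6\cdot 3-6$, since $e<6$ is required). Enforcing the strict constraint $0\le e<6$ resolves every such ambiguity.
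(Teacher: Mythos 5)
Your proposal is correct and is exactly the intended argument: the paper states the corollary without proof, leaving implicit precisely this mechanical specialization of Proposition \ref{H1} to $d=6$, and all of your $(k,e)$ decompositions and branch selections (including the boundary case $e=k+1$ at $n=14$) check out.
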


We end the discussion on curves by mentioning the Riemann-Roch Theorem for line bundles on curves. Let $\mathcal L$ be a line bundle on a  curve $C$ of genus $g$. Then the Riemann-Roch Theorem states that(see \cite{AG}, Chapter-$IV$, Theorem $1.3$) :\
\begin{align}\label{RRC}
h^0(C, \mathcal L)-h^0(C, \mathcal L^* \otimes K_C)= \text{deg}(\mathcal L)-g+1.\
\end{align}

Next we will discuss several useful results related to line bundles on $X^{(d)}$.\

From the piece of the following long exact cohomology sequence:
\begin{align*}
H^1(\mathbb P^3, \mathcal O_{\mathbb P^3}(m)) \to H^1(X^{(d)}, \mathcal O_{X^{(d)}}(m)) \to H^2(\mathbb P^3, \mathcal O_{\mathbb P^3}(m-d)) 
\end{align*}
and using the facts from the Cohomology of projective space (cf. \cite{AG}, Chapter-$3$, Theorem $5.1(b)$), we see that $H^1((X^{(d)}, \mathcal O_{X^{(d)}}(m)) =0, \forall m \in \mathbb Z$. More precisely, using Serre duality and above observation we have the following:
\begin{itemize}

\item  $H^0(\mathbb P^3, \mathcal O_{\mathbb P^3}(m)) \cong H^0(X^{(d)}, \mathcal O_{X^{(d)}}(m))$, for $m \leq d-1$.\

\item  $H^1(\mathbb P^3, \mathcal O_{\mathbb P^3}(m)) \cong H^1(X^{(d)}, \mathcal O_{X^{(d)}}(m)) =  0$,  $ \forall m \in  \mathbb Z$.\

\item  $H^2(X^{(d)}, \mathcal O_{X^{(d)}}(m)) \cong  H^0(\mathbb P^3, \mathcal O_{\mathbb P^3}(d-4-m))^*$, provided  $m \geq -3$.\

\end{itemize}

Note that, on $X^{(d)}$, one has by the adjunction formula $K_{X^{(d)}} \cong \mathcal O_{X^{(d)}}(d-4)$. One can compute its Euler characteristic as $\chi(\mathcal O_{X^{(d)}}) = 1 + \binom{d-1}{3}$. Let $C$ be a smooth hyperplane section of $X^{(d)}$. If $D$ is a non-zero effective divisor  on $X^{(d)}$, then 
\begin{itemize}

\item the arithmetic genus is given by: $P_a(D) = \frac{1}{2}D.(D+(d-4)C)+1$.\

\item the Riemann-Roch theorem for $\mathcal O_{X^{(d)}}(D)$ is given by: $\chi(\mathcal O_{X^{(d)}}(D))=  \frac{1}{2}D(D -(d-4)C)+ 1 + \binom{d-1}{3}$

\end{itemize}

Next, we mention a remark on sextic surfaces, which expresses the Euler characteristic of a divisor $D$ in terms of $C.D$ and arithmetic genus $P_a(D)$. This remark is exploited significantly in the next section.\

\begin{remark}
Consider, $X=X^{(6)}$. Let $D$ be a non-zero effective divisor on $X$. Then one has the following Riemann-Roch relation:
\begin{align}\label{K}
h^0(\mathcal O_X(D))-h^1(\mathcal O_X(D))+h^0(\mathcal O_X(2C-D))= \chi(\mathcal O_X(D)) =P_a(D)-2C.D+10.
\end{align}
\end{remark}

If $D$ is reduced and irreducible, then $P_a(D) \geq 0$. The following Proposition gives another sufficient condition  for $P_a(D) \geq 0$, which can be considered as a generalized version of \cite{W5}, Proposition $2.2$.\

\begin{proposition}\label{P1}
Let  $d \geq 4$. Let $D$ be a non-zero effective divisor on $X^{(d)}$. If $h^1(\mathcal O_{X^{(d)}}(-D)) =0$, then $P _a(D) \geq 0$.
\end{proposition}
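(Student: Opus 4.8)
The plan is to pass from the line bundle $\mathcal{O}_{X^{(d)}}(-D)$ to the structure sheaf $\mathcal{O}_D$ of the divisor itself, and to read off $P_a(D)$ through the standard identity $P_a(D) = 1 - \chi(\mathcal{O}_D)$. The key move is to interpret the cohomological hypothesis $h^1(\mathcal{O}_{X^{(d)}}(-D)) = 0$ as a statement about $h^0(\mathcal{O}_D)$, rather than attempting to estimate the self-intersection $\tfrac12 D.(D+(d-4)C)$ directly.

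First I would write down the structure sequence of $D$ on $X := X^{(d)}$,
\[
0 \to \mathcal{O}_X(-D) \to \mathcal{O}_X \to \mathcal{O}_D \to 0,
\]
and take the associated long exact cohomology sequence. Two vanishing facts feed in. Since $C$ is a very ample (hence ample) hyperplane class and $D$ is a non-zero effective divisor, one has $C.D > 0$; consequently $-D$ cannot be linearly equivalent to an effective divisor, so $H^0(\mathcal{O}_X(-D)) = 0$. Moreover, as recorded above for smooth hypersurfaces in $\mathbb{P}^3$, $H^1(X, \mathcal{O}_X) = 0$. Using $h^0(\mathcal{O}_X) = 1$ (as $X$ is connected), the opening segment of the long exact sequence collapses to $0 \to \mathbb{C} \to H^0(\mathcal{O}_D) \to H^1(\mathcal{O}_X(-D)) \to 0$, whence $h^0(\mathcal{O}_D) = 1 + h^1(\mathcal{O}_X(-D))$. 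The hypothesis $h^1(\mathcal{O}_X(-D)) = 0$ then forces $h^0(\mathcal{O}_D) = 1$.

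Finally I would invoke the definition of the arithmetic genus of a divisor on a surface, $P_a(D) = 1 - \chi(\mathcal{O}_D)$, together with the fact that $D$ is one-dimensional so that $\chi(\mathcal{O}_D) = h^0(\mathcal{O}_D) - h^1(\mathcal{O}_D)$. Substituting $h^0(\mathcal{O}_D) = 1$ gives $P_a(D) = 1 - h^0(\mathcal{O}_D) + h^1(\mathcal{O}_D) = h^1(\mathcal{O}_D) \geq 0$, which is the assertion. The argument is short and presents no serious obstacle; the only real decision is to route everything through the structure sequence of $D$. I note that the cohomological inputs ($H^1(\mathcal{O}_X) = 0$ and $H^0(\mathcal{O}_X(-D)) = 0$) hold for every smooth surface in $\mathbb{P}^3$, so the hypothesis $d \geq 4$ serves only to place us in the stated generality and is not otherwise essential to this particular proof.
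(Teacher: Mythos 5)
Your proof is correct, but it takes a genuinely different route from the paper's. You run the structure sequence $0 \to \mathcal O_X(-D) \to \mathcal O_X \to \mathcal O_D \to 0$, use $h^0(\mathcal O_X(-D))=0$ and $h^1(\mathcal O_X)=0$ to extract $h^0(\mathcal O_D)=1+h^1(\mathcal O_X(-D))=1$, and then read off $P_a(D)=1-\chi(\mathcal O_D)=h^1(\mathcal O_D)\geq 0$. The paper instead computes $\chi(\mathcal O_{X^{(d)}}(-D))=P_a(D)+\binom{d-1}{3}$ by Riemann--Roch, converts $h^2(\mathcal O_{X^{(d)}}(-D))$ into $h^0(\mathcal O_{X^{(d)}}(D+(d-4)C))$ by Serre duality, and bounds this from below by $h^0(\mathcal O_{X^{(d)}}((d-4)C))=\binom{d-1}{3}$ using the inclusion coming from the effectivity of $D$ and the identification of $H^0(K_{X^{(d)}})$ with degree-$(d-4)$ forms on $\mathbb P^3$. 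Your argument buys generality and economy: it needs only $h^1(\mathcal O_X)=0$ (true for every smooth surface in $\mathbb P^3$, as you note), it avoids Serre duality and the explicit count $\binom{d-1}{3}$, and it is really the same mechanism the paper later exploits in Corollary \ref{CON1} ($1$-connectedness gives $h^0(\mathcal O_D)=1$, hence $h^1(\mathcal O_X(-D))=0$), so it makes the logical kinship between Proposition \ref{P1} and that corollary transparent. The only point worth making explicit is that your identity $P_a(D)=1-\chi(\mathcal O_D)$ agrees with the paper's adjunction-style definition $P_a(D)=\tfrac12 D.(D+(d-4)C)+1$; this follows from $\chi(\mathcal O_D)=\chi(\mathcal O_X)-\chi(\mathcal O_X(-D))=-\tfrac12 D.(D+K_X)$, and is standard, so nothing is lost.
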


\begin{proof}
Applying Riemann-Roch Theorem for line bundle $\mathcal O_{X^{(d)}}(-D)$ on the surface $X^{(d)}$, we have the following :

\begin{align*}
\chi(\mathcal O_{X^{(d)}}(-D))= & \frac{1}{2}(-D)(-D -(d-4)C)+ 1 + \binom{d-1}{3}\\
                      = & \frac{1}{2}D(D +(d-4)C)+ 1 + \binom{d-1}{3}\\
                      = & P_a(D) + \binom{d-1}{3}\
\end{align*}

From the ampleness of $C$, we have $C.D \geq 1 $, which implies $C.(-D) \leq -1$ and hence  $h^0(\mathcal O_{X^{(d)}}(-D)) =0$. This observation along with the Serre duality and first cohomology vanishing hypothesis one obtains :\

$h^0(\mathcal O_{X^{(d)}}(D +(d-4)C) = P_a(D) + \binom{d-1}{3}$.\

Now in order to obtain a lower bound of L.H.S we look at the following : \

$D \hookrightarrow X^{(d)}$ induces $\mathcal O_{X^{(d)}}(-D) \hookrightarrow \mathcal O_{X^{(d)}}$. Twisting with $\mathcal O_{X^{(d)}}(D + (d-4) C)$ yields  the following inclusion : $ \mathcal O_{X^{(d)}}((d-4)C) \hookrightarrow \mathcal O_{X^{(d)}}(D+(d-4)C)$. This means, $h^0(O_{X^{(d)}}((d-4)C)) \leq h^0(O_{X^{(d)}}(D+(d-4)C)$. Since $d-4 \leq d-1$, we have, $h^0(O_{X^{(d)}}((d-4)C)) \cong h^0(\mathcal O_{\mathbb P^3}(d-4))$ and therefore, one has $h^0(O_{X^{(d)}}(D+(d-4)C)) \geq \binom{d-1}{3}$, which in turn implies $P_a(D) \geq 0$.\\
\end{proof}



Let $D$ be a divisor on $X^{(d)}$ such that $C.D>0$. Consider the following  S.E.S:
\begin{align}\label{TES}
0 \to \mathcal O_{X^{(d)}}(-D) \to \mathcal O_{X^{(d)}}(C-D) \to \mathcal O_C(C-D) \to 0
\end{align}

In the context of S.E.S \ref{TES}, we  note down an easy remark regarding  an upperbound of $h^0(\mathcal O_{X^{(d)}}(C-D))$.\

\begin{remark}\label{L1}
Applying long exact cohomology sequence to the S.E.S \ref{TES} and  noting  $h^0(\mathcal O_{X^{(d)}}(-D))=0$, one obtains $h^0(\mathcal O_{X^{(d)}}(C-D)) \leq h^0(\mathcal O_C(C-D))$. By Remark \ref{R1}, we have $h^0(\mathcal O_{X^{(d)}}(C-D)) \leq 2$.\
\end{remark}
 





Let $X= X^{(6)}$. The following lemmas are useful  in showing the emptiness of certain linear systems in section \ref{S3}. These two lemmas (Lemma \ref{L2}, \ref{L3}) are adaptations of \cite{W5} Lemma $2.1, 2.2$ to the case of sextic surface.\

\begin{lemma}\label{L2}
Let $D$ be a divisor on $X$ satisfying $C.D=1$. Then $(a)\Rightarrow (b)$, where\

$(a)$ $h^0(\mathcal O_X(D)) >0$\

$(b)$ $h^0(\mathcal O_X(D))=1, h^0(\mathcal O_X(2C-D)) \geq 7$ and $D^2 \geq -4$.\

Moreover, under the assumption $C.D=1$ and $D^2\geq -4$, if $h^0(\mathcal O_X(2C-D)) \leq 7$, then  $h^0(\mathcal O_X(D)) >0$. 

\end{lemma}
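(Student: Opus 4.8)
The whole statement reduces to the Riemann--Roch relation \eqref{K}, which, since $C.D=1$ and $K_X=\mathcal O_X(2C)$, reads
$h^0(\mathcal O_X(D))-h^1(\mathcal O_X(D))+h^0(\mathcal O_X(2C-D))=P_a(D)+8$, together with the numerical identity $P_a(D)=\tfrac12 D^2+2$ (so that $D^2\ge-4$ is equivalent to $P_a(D)\ge0$). The plan for both directions is therefore to feed the correct value of $h^0(\mathcal O_X(D))$ and of $P_a(D)$ into this single equation.

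For $(a)\Rightarrow(b)$ I would first pin down $h^0(\mathcal O_X(D))$. Tensoring the restriction sequence of $C$ by $\mathcal O_X(D)$ produces $0\to\mathcal O_X(D-C)\to\mathcal O_X(D)\to\mathcal O_C(D)\to0$. Since $C.(D-C)=1-6=-5<0$ and $C$ is ample, $\mathcal O_X(D-C)$ has no global sections, so $h^0(\mathcal O_X(D))\le h^0(\mathcal O_C(D))$. As $\mathcal O_C(D)$ has degree $C.D=1$ on the curve $C$ of genus $g(C)=10\ge1$, it admits at most one section (a degree-$1$ bundle with two sections would give a $g^1_1$, forcing $C\cong\mathbb P^1$). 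Hence $h^0(\mathcal O_X(D))\le1$, and combined with $(a)$ this gives $h^0(\mathcal O_X(D))=1$.

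Next I would obtain $D^2\ge-4$. Being effective with $H.D=C.D=1$, the divisor $D$ has degree $1$ under the embedding given by $|C|$; writing $D=\sum a_i\Gamma_i$ with each $C.\Gamma_i\ge1$ forces a single prime component with coefficient $1$, so $D$ is a reduced irreducible (degree-$1$) curve, and hence $P_a(D)\ge0$, i.e. $D^2\ge-4$. Substituting $h^0(\mathcal O_X(D))=1$ and $P_a(D)\ge0$ into \eqref{K} then gives $h^0(\mathcal O_X(2C-D))=P_a(D)+7+h^1(\mathcal O_X(D))\ge7$, which completes $(b)$.

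For the final (``Moreover'') assertion I would argue directly from \eqref{K}. The hypothesis $D^2\ge-4$ gives $P_a(D)=\tfrac12 D^2+2\ge0$, and rearranging \eqref{K} yields $h^0(\mathcal O_X(D))=P_a(D)+8+h^1(\mathcal O_X(D))-h^0(\mathcal O_X(2C-D))$. Using $P_a(D)\ge0$, $h^1(\mathcal O_X(D))\ge0$, and the hypothesis $h^0(\mathcal O_X(2C-D))\le7$, the right-hand side is at least $0+8+0-7=1$, so $h^0(\mathcal O_X(D))>0$. The only step that is not pure bookkeeping with \eqref{K} is the nonnegativity of $P_a(D)$ in the first direction, and that is exactly where the main (though mild) obstacle lies: it rests on recognizing the effective member as a reduced irreducible degree-$1$ curve rather than on a cohomological vanishing such as Proposition \ref{P1}, whose hypothesis $h^1(\mathcal O_X(-D))=0$ is not readily available here.
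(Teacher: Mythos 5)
Your proposal is correct and follows essentially the same route as the paper: the same restriction sequence $0\to\mathcal O_X(D-C)\to\mathcal O_X(D)\to\mathcal O_C(D)\to 0$ with ampleness killing $h^0(\mathcal O_X(D-C))$, the same observation that an effective $D$ with $C.D=1$ is reduced and irreducible (hence $P_a(D)\ge 0$, i.e.\ $D^2\ge -4$), and the same Riemann--Roch bookkeeping via \eqref{K} for both the bound $h^0(\mathcal O_X(2C-D))\ge 7$ and the converse assertion. The only cosmetic difference is that you bound $h^0(\mathcal O_C(D))$ by a direct degree/gonality argument where the paper cites Remark \ref{R1}, which encodes the same fact.
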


\begin{proof}
Assume $(a)$. By the hypothesis, we may assume that $D$ is effective.
Since  $C$ is ample, $D$ is reduced and irreducible. This forces, $P_a(D) \geq 0$, and hence we have, $D^2 \geq -4$.\

Consider the S.E.S :\
\begin{align}\label{d1}
0\to \mathcal O_X(D-C) \to \mathcal O_X(D) \to \mathcal O_C(D) \to 0
\end{align}

Since $C.(D-C)<0$, by the ampleness of $C$, we have $h^0(\mathcal O_X(D-C)) =0$.  By Remark \ref{R1}, one sees that $h^0(\mathcal O_C(D)) = h^0(\mathcal O_X(D)) =1$. From Riemann-Roch Theorem we have:\
\begin{align}\label{e1}
h^0(\mathcal O_X(D))+h^0(\mathcal O_X(2C-D)) \geq \chi(\mathcal O_X(D)) \geq 8
\end{align}
Since  $h^0(\mathcal O_X(D)) =1$, we have $h^0(\mathcal O_X(2C-D)) \geq 7$. Conversely, under the assumption $C.D=1$ and $D^2 \geq -4$, we still have the inequality \ref{e1}, whence it follows that, if $h^0(\mathcal O_X(2C-D)) \leq 7$, then $h^0(\mathcal O_X(D)) >0$.

\end{proof}

\begin{lemma}\label{L3}
Let $D$ be a non-zero effective divisor on $X$ with $C.D =2$. If $D^2 \leq -8$, then one of the following cases occurs :\

$(i)$ There exists a curve $D_1$ on $X$ with $D=2D_1$, $D^2_1 \geq -4$ and $C.D_1=1$.\

$(ii)$ There exist curves $D_1$ and $D_2$ with $D=D_1 + D_2$, $D_1.D_2=0,D_i^2 =-4$ and $C.D_i=1$(for $i=1,2$).\

Conversely, Let  $D$ be a non-zero effective divisor on $X$ with $C.D =2$. If one of the following cases occurs :\

$(i)$ There exists a curve $D_1$ on $X$ with $D=2D_1$, $D^2_1 \geq -4$ , $C.D_1=1$ and $ P_a(D_1) \leq 1$.\

$(ii)$ There exist curves $D_1$ and $D_2$ with $D=D_1 + D_2$, $D_1.D_2=0,D_i^2 =-4$ and $C.D_i=1$(for $i=1,2$).\

Then, $D^2 \leq -8$

\end{lemma}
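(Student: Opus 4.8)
The plan is to treat the two directions separately: the forward implication by classifying the effective divisor $D$ through its irreducible decomposition, and the converse by a direct numerical check. Throughout I will use the adjunction relation $K_X = \mathcal O_X(2C)$ on the sextic and the resulting genus formula $P_a(E) = \tfrac{1}{2}E.(E+2C)+1$, together with the standard fact that a reduced irreducible curve has non-negative arithmetic genus.

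For the forward direction, first I would exploit the ampleness of $C$: every irreducible component $\Gamma$ of $D$ satisfies $C.\Gamma \geq 1$, so writing $D = \sum_i a_i\Gamma_i$ with distinct irreducible $\Gamma_i$ and $a_i>0$, the equation $C.D = \sum_i a_i(C.\Gamma_i)=2$ leaves only three possibilities: (A) $D$ is irreducible with $C.D=2$; (B) $D = 2\Gamma_1$ with $C.\Gamma_1=1$; (C) $D = \Gamma_1+\Gamma_2$ with $\Gamma_1 \neq \Gamma_2$ irreducible and $C.\Gamma_i=1$. In case (A) the genus formula gives $P_a(D) = \tfrac{1}{2}D^2+3$, so $P_a(D)\geq 0$ forces $D^2\geq -6$, contradicting $D^2\leq -8$; hence (A) is impossible. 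In case (B), irreducibility of $\Gamma_1$ gives $P_a(\Gamma_1) = \tfrac{1}{2}\Gamma_1^2+2 \geq 0$, i.e. $\Gamma_1^2\geq -4$, which is exactly conclusion (i) with $D_1=\Gamma_1$. In case (C) the same computation gives $\Gamma_1^2,\Gamma_2^2\geq -4$, while distinctness of the two irreducible curves yields $\Gamma_1.\Gamma_2 \geq 0$; expanding $D^2 = \Gamma_1^2+2\Gamma_1.\Gamma_2+\Gamma_2^2 \geq -8$ and combining with the hypothesis $D^2\leq -8$ forces the simultaneous equalities $\Gamma_1^2=\Gamma_2^2=-4$ and $\Gamma_1.\Gamma_2=0$, which is conclusion (ii).

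For the converse, under hypothesis (i) I would convert $P_a(D_1)\leq 1$ via $P_a(D_1)=\tfrac{1}{2}D_1^2+2$ into $D_1^2\leq -2$, so that $D^2=(2D_1)^2=4D_1^2\leq -8$; under hypothesis (ii) one simply computes $D^2 = \Gamma_1^2+2\Gamma_1.\Gamma_2+\Gamma_2^2 = -4+0-4 = -8$. The argument is essentially bookkeeping, so the only points demanding care --- which I regard as the crux --- are verifying that the three-case decomposition is exhaustive and noticing that in the converse case (i) the hypothesis $P_a(D_1)\leq 1$ is indispensable: bare irreducibility of $D_1$ only yields $D_1^2 \geq -4$, hence the far weaker $D^2\geq -16$. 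I would also flag that the inequality $\Gamma_1.\Gamma_2 \geq 0$ used in case (C) rests on the two components being distinct.
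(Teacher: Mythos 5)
Your proof is correct and follows essentially the same route as the paper: decompose $D$ using the ampleness of $C$ to force $C.\Gamma_i=1$ on each piece, bound the self-intersections via non-negativity of the arithmetic genus of an irreducible curve, and conclude by arithmetic. The only cosmetic differences are that you compute $\Gamma_i^2\geq -4$ directly from the genus formula where the paper cites its Lemma 2.9 (the $C.D=1$ lemma), and you make explicit the exclusion of the irreducible case and the verification of the converse, both of which the paper leaves as implicit or "clear."
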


\begin{proof}
If $D^2 \leq -8$, then there exists a non-trivial effective decomposition $D=D_1+D_2$. As $C.D=2$, we must have $C.D_1=C.D_2=1$. Hence, by Lemma \ref{L2}, $D^2_i \geq -4(i=1,2)$. If $D_1=D_2$, then $D=2D_1$ and $D^2=4D^2_1 \geq -16$. If $D_1 \neq D_2$, then $D_1.D_2 \geq 0$. This forces,  $D^2 =-8$ and hence $D_1.D_2=0$ and $D^2_i=4(i=1,2)$. The converse assertion is clear.\

\end{proof}



Next we note down some well known facts about ACM bundles on $X=X^{(6)}$. Then we move onto preparing a proposition which will be crucial in proving the if part our main theorem. We start by recalling the definitions of the main objects concerned.\

\begin{definition}(ACMness)
We call a vector bundle $\mathcal E$ on $X$ an arithmetically Cohen-Macaulay(ACM) bundle if $H^1(\mathcal E(t))=0$ for all integers $t \in \mathbb Z$.\

\end{definition}

\begin{definition}(Initializedness)
For a sheaf $\mathcal F$ on $X$, we define the \textit{initial twist} as the integer $t$ such that $h^0(\mathcal F(t)) \neq 0$ and $h^0(\mathcal F(t-1))=0$. $\mathcal F$ is called initialized if its initial twist is $0$, i.e. if $h^0(\mathcal F)) >h^0(\mathcal F(-1))=0$.\

\end{definition}

For an ACM bundle $\mathcal E$ on $X$, we consider the graded module $H^0_{*}(\mathcal E) := \oplus_{t \in \mathbb Z}H^0(\mathcal E(t))$
over the homogeneous coordinate ring of $X$.  The following result gives an upperbound of  the minimal number of generators of it. It also deals with an upperbound of the global sections of an initialized and ACM line bundle on $X$ and an iff condition for a rank $r$ bundle on $X$ to be Ulrich.

\begin{proposition} (\cite{H3}, Theorem 3.1 and Corollary 3.5]). Let $\mathcal E$ be an ACM bundle
of rank $r$ on $X$, and let $\mu(\mathcal E)$ be the minimal number of generators of $H^0_{*}(\mathcal E)$. Then we get $\mu(\mathcal E) \leq 6r$. Moreover, if $\mathcal E$ is initialized, then $h^0(\mathcal E) \leq 6r$ and equality implies that $\mathcal E$ is an Ulrich bundle.
\end{proposition}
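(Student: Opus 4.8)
The plan is to translate the two assertions about the bundle $\mathcal E$ into commutative algebra and then read them off from a single free resolution. First I would set $S=\mathbb C[x_0,\dots,x_3]$, let $f$ be the defining sextic of $X$, and $R=S/(f)$ the homogeneous coordinate ring, which is Cohen--Macaulay. Then $M:=H^0_*(\mathcal E)=\bigoplus_{t\in\mathbb Z}H^0(\mathcal E(t))$ is a finitely generated graded $R$-module, and $\mu(\mathcal E)$ is its minimal number of generators. Since $X$ is arithmetically Cohen--Macaulay and $\mathcal E$ is ACM (so $H^1_*(\mathcal E)=0$), the module $M$ is a maximal Cohen--Macaulay $R$-module, i.e. $\operatorname{depth}_R M=\dim R=3$. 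Regarding $M$ as an $S$-module its depth is unchanged, so Auslander--Buchsbaum gives $\operatorname{pd}_S M=4-3=1$, and $M$ admits a minimal graded free resolution
\[
0\longrightarrow F_1\xrightarrow{\ A\ }F_0\longrightarrow M\longrightarrow 0,\qquad F_0=\bigoplus_{i=1}^{N}S(-a_i),\quad F_1=\bigoplus_{j=1}^{N'}S(-b_j),
\]
where $N=\mu(\mathcal E)$. Because $M$ is supported on $X$ it is an $S$-torsion module, which forces $N=N'$; hence $A$ is square, it is a matrix factorization of $f$ in the sense of Eisenbud, and $\det A$ equals $f^{\,r}$ up to a nonzero scalar, a form of degree $6r$, where $r=\operatorname{rank}\mathcal E$.

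The first inequality is then a degree count. Minimality of the resolution means no entry of $A$ is a nonzero constant, so every nonzero entry $A_{ij}$ is a form of degree $b_j-a_i\ge 1$. Expanding $\det A=\sum_{\sigma}\pm\prod_{i}A_{i,\sigma(i)}$ and choosing any permutation $\sigma$ whose term is nonzero (one exists, since $\det A\neq 0$), that term is a product of $N$ forms each of degree $\ge 1$, hence of degree $\ge N$; on the other hand its degree equals $\deg\det A=6r$. Therefore $\mu(\mathcal E)=N\le 6r$.

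For the initialized statement I would use $h^0(\mathcal E)=\dim_{\mathbb C}M_0$ together with the fact that initializedness means $M_t=0$ for $t<0$ while $M_0\neq 0$. Since the irrelevant ideal $\mathfrak m$ has no elements in degree $0$, we get $(\mathfrak m M)_0=0$, so every element of $M_0$ is a minimal generator; consequently $h^0(\mathcal E)=\dim M_0\le \mu(\mathcal E)\le 6r$. If the equality $h^0(\mathcal E)=6r$ holds, then both inequalities $h^0(\mathcal E)\le\mu(\mathcal E)\le 6r$ are equalities: all $N=6r$ minimal generators sit in degree $0$, so $a_i=0$ for every $i$ and $F_0=S^{\,N}$. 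The degree bookkeeping of the matrix factorization now reads $\sum_j b_j-\sum_i a_i=\deg\det A=6r=N$, i.e. $\sum_j b_j=N$ with $N$ summands, and minimality forces each $b_j\ge 1$; hence $b_j=1$ for all $j$. Thus $A$ is a matrix of linear forms and $M$ has a linear resolution $0\to S(-1)^{N}\xrightarrow{A}S^{N}\to M\to 0$, which is exactly the condition that $\mathcal E$ be an Ulrich bundle.

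The determinant/degree estimate and the minimal-generator count are elementary; the step that requires care is the structural input, namely establishing that $M$ is a maximal Cohen--Macaulay $R$-module with $\operatorname{pd}_S M=1$, that the presentation matrix $A$ is square, and that $\det A=f^{\,r}$ has degree exactly $6r$. I would supply this by invoking Eisenbud's matrix-factorization theorem for hypersurface rings together with the standard dictionary between ACM sheaves on the arithmetically Cohen--Macaulay surface $X$ and graded maximal Cohen--Macaulay $R$-modules. Once this input is in place, both parts of the proposition follow from the degree bookkeeping above.
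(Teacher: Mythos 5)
The paper does not prove this proposition at all: it is imported verbatim from Casanellas--Hartshorne (\cite{H3}, Theorem 3.1 and Corollary 3.5), with the bound $3r$ for cubics replaced by $dr=6r$ for the sextic. Your argument is correct and is in substance the proof given in that reference: pass to the maximal Cohen--Macaulay module $M=H^0_*(\mathcal E)$, use Auslander--Buchsbaum to get a length-one resolution over $S$ by a square matrix $A$ with $\det A = cf^{\,r}$ of degree $6r$, and read both inequalities off the degrees of the entries of $A$; the equality case forces $a_i=0$ and $b_j=1$, i.e.\ a linear matrix factorization, which is one of the standard equivalent characterizations of Ulrich (and matches the paper's Hilbert-polynomial criterion $6r\binom{t+2}{2}$, as one checks directly from the resolution $0\to S(-1)^{N}\to S^{N}\to M\to 0$). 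One small point worth making explicit: the paper's definition of initialized only gives $h^0(\mathcal E(-1))=0$, so to conclude $M_t=0$ for all $t<0$ you should note that $t\mapsto h^0(\mathcal E(t))$ is nondecreasing (multiplication by a nonzero linear form is injective on a torsion-free sheaf on the integral surface $X$); with that observation the minimal-generator count $h^0(\mathcal E)=\dim M_0\le\mu(\mathcal E)$ is complete. The structural inputs you defer to (ACM sheaf $\Leftrightarrow$ MCM module over the hypersurface ring, squareness of $A$, $\det A=cf^{\,r}$) are exactly Eisenbud's matrix-factorization package and are legitimate to invoke.
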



 Alternatively an Ulrich bundle of rank $r$ on $X$ can be characterized as an initialized and ACM bundle whose Hilbert polynomial is equal to $6r \binom{t+2}{2}$ (cf. \cite{CKM1}, Proposition $2.3$). Naturally one can ask where  the  Ulrich line bundles appears$\footnote{Note that, if there exists such a line bundle on the surface, then  it follows that the surface is linear determinantal. }$ in our main characterization Theorem (\ref{T1}) on initialized and ACM line bundles on $X$. Here we briefly remark a classification for Ulrich line bundles in the context of the promised Theorem \ref{T1}.\

\begin{remark}

We obtain the following numerical classification of Ulrich line bundles:\  

The line bundle $\mathcal O_X(D)$ defined by an effective divisor $D$ on $X$ is Ulrich iff the following conditions are satisfied:\
\begin{center}
 $P_a(D)=2C.D-4, C.D=15, h^0(\mathcal O_C(D))=6, h^0(\mathcal O_C(D-C))=0$ and $h^0(\mathcal O_C(D-2C))=0$.
 \end{center}
i.e., iff $C.D=15$ and satisfies the conditions mentioned in Theorem \ref{T1}$(iii)(a),(b), (c)$.\


\end{remark}

The following property of ACM line bundle will be useful in proving Proposition \ref{suff}.\

\begin{remark}\label{rACM}
A line bundle on $X$ is ACM if and only if the dual of it is ACM.
\end{remark}

Next we generalize Proposition $3.2$ of \cite{W5}  for $X^{(d)}$. This plays a crucial role in dealing with ACMness in the if part of Theorem \ref{T1}.\

\begin{proposition}\label{suff} 
Let $D$ be a non-zero effective divisor on $X^{(d)}$. Let $C$ be a smooth hyeperplane section of $X^{(d)}$ and  $k$ be a positive integer satisfying $C.D + d^2 <(k+4)d$. If $h^1(\mathcal O_{X^{(d)}}(tC-D)) =0$ for $0 \leq t \leq k$, then $\mathcal O_{X^{(d)}}(D)$ is ACM.
\end{proposition}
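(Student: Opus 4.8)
The plan is to prove that $\mathcal{O}_{X^{(d)}}(D)$ is ACM directly from the definition, i.e. to show $H^1(\mathcal{O}_{X^{(d)}}(D+sC))=0$ for every $s\in\mathbb{Z}$. Writing $X=X^{(d)}$ for brevity, I would split the integers $s$ into the two overlapping ranges $s\ge d-4$ and $s\le d-4$, and treat each by an induction that raises a twist by $C$ and restricts to the hyperplane section. The recurring tool is the restriction sequence
\[
0\to \mathcal{O}_{X}(E-C)\to \mathcal{O}_{X}(E)\to \mathcal{O}_C(E)\to 0,
\]
whose long exact cohomology sequence controls $H^1$ of $\mathcal{O}_X(E)$ in terms of $H^1$ of a smaller twist on $X$ together with $H^1(\mathcal{O}_C(E))$; the latter I kill whenever $\deg\mathcal{O}_C(E)=E\cdot C>2g-2$, using Riemann--Roch/Serre duality on $C$. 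Here $g=g(C)=\tfrac{(d-1)(d-2)}{2}$, so $2g-2=d^2-3d$, and I will repeatedly use $C^2=d$ and $K_X\cong\mathcal{O}_X((d-4)C)$.

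For the upper range I take $E=D+sC$, so that $\deg\mathcal{O}_C(D+sC)=C.D+sd$. Since $C.D\ge 1$ by ampleness of $C$, this exceeds $2g-2=d^2-3d$ as soon as $s\ge d-3$, whence $H^1(\mathcal{O}_C(D+sC))=0$ there. Feeding this into the restriction sequence for $E=D+sC$, with base case $H^1(\mathcal{O}_X(D+(d-4)C))=0$ — which is the $t=0$ hypothesis read through Serre duality, since $H^1(\mathcal{O}_X(-D))\cong H^1(\mathcal{O}_X(D+(d-4)C))^*$ — an upward induction in $s$ gives $H^1(\mathcal{O}_X(D+sC))=0$ for all $s\ge d-4$. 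This half uses nothing beyond $C.D>0$.

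The lower range is where the arithmetic hypothesis $C.D+d^2<(k+4)d$ enters, and it is the step I expect to be the main obstacle: a naive downward induction in $s$ would demand surjectivity of the restriction maps $H^0(\mathcal{O}_X(D+sC))\to H^0(\mathcal{O}_C(D+sC))$, which is not available. I sidestep this by dualizing. By Serre duality $H^1(\mathcal{O}_X(D+sC))\cong H^1(\mathcal{O}_X(tC-D))^*$ with $t=d-4-s$, so that $s\le d-4$ corresponds to $t\ge 0$ and a downward induction in $s$ becomes an upward induction in $t$. Taking $E=tC-D$ gives $\deg\mathcal{O}_C(tC-D)=td-C.D$, and the hypothesis rearranges to $C.D<(k+4)d-d^2$, which is exactly the inequality forcing $(k+1)d-C.D>d^2-3d=2g-2$; hence $H^1(\mathcal{O}_C(tC-D))=0$ for all $t\ge k+1$. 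With base case $H^1(\mathcal{O}_X(kC-D))=0$ (the $t=k$ hypothesis), the restriction sequence for $E=tC-D$ then yields $H^1(\mathcal{O}_X(tC-D))=0$ for all $t\ge k$; combined with the assumed vanishing for $0\le t\le k$, this gives vanishing for all $t\ge 0$, i.e. for all $s\le d-4$.

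Combining the two ranges produces $H^1(\mathcal{O}_X(D+sC))=0$ for every $s$, which is precisely the ACMness of $\mathcal{O}_X(D)$. The only points needing care are the Serre-duality bookkeeping via $K_X\cong\mathcal{O}_X((d-4)C)$ — so that the hypotheses on $tC-D$ become vanishing statements for negative twists of $D$ — and the verification that the single inequality $C.D+d^2<(k+4)d$ is exactly the threshold launching the induction in $t$ at $t=k+1$. Once these are in place, no further geometric input is required.
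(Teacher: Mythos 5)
Your proposal is correct and is essentially the paper's own argument: the paper likewise runs two upward/downward inductions on the restriction sequence, kills the curve-level $H^1$ by the same degree computation (phrased there as $h^0(\mathcal O_C(D+(d-n-4)C))=0$ via Serre duality on $C$, which is exactly your $\deg>2g-2$ threshold), and uses the hypothesis $C.D+d^2<(k+4)d$ at the identical point to launch the induction above $t=k$. The only cosmetic difference is that the paper works throughout with the twists $tC-D$ of $\mathcal O_X(-D)$ and concludes via the remark that a line bundle is ACM iff its dual is, whereas you translate half the computation back to $D+sC$ by Serre duality on $X$.
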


\begin{proof}


Note that, If  $n \geq k$, then  by hypothesis, $\text{deg}(\mathcal O_C(D+ (d-n-4)C)) = C.D +d(d-n-4) < 0$ and therefore, using Serre duality one has $h^1(\mathcal O_C((n+1)C-D)) =\footnote{\text{Observe that,} $K_C \cong \mathcal O_C(d-3)$.} h^0(\mathcal O_C(D+ (d-n-4)C))=0$. As $h^1(\mathcal O_{X^{(d)}}(tC-D)) =0$ for $0 \leq t \leq k$. Therefore, using the following S.E.S :\
\begin{center}
$0 \to \mathcal O_{X^{(d)}}(nC-D) \to \mathcal O_{X^{(d)}}((n+1)C-D) \to  \mathcal O_C((n+1)C-D) \to 0$,\
\end{center}
it can be seen that we have  by induction $h^1(O_{X^{(d)}}(nC-D)) =0,  \forall n \geq 1$. Next, note that, for $m \geq 0$, we have $h^0(\mathcal O_C(-mC-D))=0$ and from the  assumption, we have $h^1(\mathcal O_{X^{(d)}}(-D)) =0$. This means by the following S.E.S :\
\begin{center}
$0 \to \mathcal O_{X^{(d)}}(-(m+1)C-D)) \to O_{X^{(d)}}(-mC-D) \to O_C(-mC-D))  \to 0$,\

\end{center}

for any integer $m \geq 0$, we have $h^1(O_{X^{(d)}}(-mC-D))=0$ (by induction). Therefore, $\mathcal O_{X^{(d)}}(-D)$ is ACM, whence $\mathcal O_{X^{(d)}}(D)$ is  ACM by Remark \ref{rACM}. 

\end{proof}

\begin{corollary}\label{C1}
In particular for $X = X^{(6)}$, we obtain the following precise statement : Let $D$ be a non-zero effective divisor on $X$. Let $C$ be a smooth hyeperplane section of $X$ and  $k$ be a positive integer satisfying $C.D + 12 <6k$. If $h^1(\mathcal O_X(tC-D)) =0$ for $0 \leq t \leq k$, then $\mathcal O_X(D)$ is ACM.
\end{corollary}

We end this section by mentioning the concept of $1$-connectedness of a divisor, which will play an essential role in the proof of the vanishing of $h^1(\mathcal O_X(-D))$ for certain non-zero effective divisors in Proposition \ref{VAN}. We first recall the definition of $m$-connectedness of a divisor.\

\begin{definition}($m$-connected divisor)
Let $m$ be a positive integer. Then a non-zero effective divisor $D$ on a surface  is called $m$-connected, if $D_1.D_2 \geq m$, for each effective decomposition $D=D_1+D_2$.\

\end{definition}

The following Lemma gives us a necessary condition for a non-zero effective divisor $D$ to be $1$-connected.\

\begin{lemma}\label{CON}
If $D$ is $1$-connected effective divisor, then $h^0(\mathcal O_D)=1$.

\end{lemma}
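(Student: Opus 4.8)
The plan is to prove the stronger statement that every global section $f \in H^0(\mathcal{O}_D)$ is constant; since the constant $1$ always gives a nonzero section, this yields $h^0(\mathcal{O}_D) = 1$. Throughout I work with the prime decomposition $D = \sum_i n_i C_i$ on the smooth surface $S$, where each $C_i$ is an integral projective curve.

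First I would reduce to a section that vanishes on the reduced divisor $D_{\mathrm{red}} = \sum_i C_i$. For each $i$, restriction gives $f|_{C_i} \in H^0(\mathcal{O}_{C_i}) = \mathbb{C}$, so $f|_{C_i} = \lambda_i$ is a constant. If $C_i$ and $C_j$ meet at a point $p$, then evaluating $f$ at the closed point $p$ through either curve shows $\lambda_i = f(p) = \lambda_j$. Because $D$ is $1$-connected it is in particular connected (if $|D|$ were disconnected, writing $D = D_1 + D_2$ along the connected components would give $D_1 \cdot D_2 = 0$, contradicting $1$-connectedness), so the intersection graph of the $C_i$ is connected and all the $\lambda_i$ coincide with a single constant $\lambda$. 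Replacing $f$ by $f - \lambda$, I may assume $f$ vanishes on $D_{\mathrm{red}}$; it then suffices to prove that such an $f$ is $0$.

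The heart of the argument is to manufacture a forbidden decomposition of $D$ from a hypothetical nonzero $f$. I would define $A = \sum_i a_i C_i$, where $a_i$ is the order of vanishing of $f$ along $C_i$ measured at the generic point $\eta_i$ (so $a_i = n_i$ exactly when $f$ vanishes identically along $C_i$). Since $f$ vanishes on $D_{\mathrm{red}}$ we have $a_i \geq 1$, hence $A \geq D_{\mathrm{red}} > 0$; and if $f \neq 0$ then some $a_i < n_i$, so $B := D - A$ is also a nonzero effective divisor. Dividing $f$ by a local equation of $A$ produces a nonzero section $g$ of $\mathcal{O}_B(-A) = \mathcal{O}_B \otimes \mathcal{O}_S(-A)$, realized through the standard inclusion $\mathcal{O}_B(-A) \hookrightarrow \mathcal{O}_D$ coming from $0 \to \mathcal{O}_B(-A) \to \mathcal{O}_D \to \mathcal{O}_A \to 0$. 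By construction $g$ has generic vanishing order $0$ along each component $C_j$ of $B$, so its restriction to $C_j$ is a nonzero section of the line bundle $\mathcal{O}_{C_j}(-A)$ of degree $-A \cdot C_j$ on the integral curve $C_j$; a nonzero section forces $-A \cdot C_j \geq 0$. Summing over the components of $B$ with their positive multiplicities gives $A \cdot B \leq 0$.

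Finally, $D = A + B$ with both summands nonzero and effective, so $1$-connectedness forces $A \cdot B \geq 1$, contradicting $A \cdot B \leq 0$. Hence $f = 0$ after subtracting the constant, and $h^0(\mathcal{O}_D) = 1$. The routine inputs — that a negative-degree line bundle on an integral curve has no sections, and that $H^0(\mathcal{O}_{C_i}) = \mathbb{C}$ — are standard. I expect the main obstacle to be guaranteeing that the decomposition $D = A + B$ is genuinely nontrivial, i.e.\ that $A > 0$: this is precisely where the global connectedness of $|D|$ (and not merely positivity of a single intersection number) is used, through the step forcing all the restriction constants $\lambda_i$ to agree. Some care is also needed to make the generic vanishing orders $a_i$ and the associated section $g$ rigorous, but these reduce to local computations in the discrete valuation rings $\mathcal{O}_{S,\eta_i}$.
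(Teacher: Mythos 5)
Your argument is correct. The paper does not prove this lemma at all --- it simply cites \cite{BPW}, Corollary $12.3$ --- and what you have written is essentially a complete, self-contained version of the standard proof from that reference: reduce to a section vanishing on $D_{\mathrm{red}}$ via connectedness of the support, extract the maximal effective $A \leq D$ along which the section vanishes generically, and play the nonnegativity of $\deg \mathcal O_{C_j}(-A)$ on the components of $B = D-A$ against $A\cdot B \geq 1$. The only two points worth making fully explicit are (a) that generic vanishing orders $\mathrm{ord}_{\eta_i}(\tilde f)\geq a_i$ really do force a local lift $\tilde f$ into the ideal $\mathcal O_S(-A)$, which holds because the surface is smooth (hence normal) and $\mathcal O_S(-A)$ is invertible, so membership can be tested in codimension one; and (b) that $f\neq 0$ forces some $a_i<n_i$, i.e.\ $B\neq 0$, which follows from the same observation since $a_i=n_i$ for all $i$ would put $f$ in the zero subsheaf $\mathcal O_S(-D)/\mathcal O_S(-D)$. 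Both are routine, so the proof stands as a valid substitute for the citation.
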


\begin{proof}
See \cite{BPW}, Corollary $12.3$.\
\end{proof}

\begin{corollary}\label{CON1}

If a non-zero effective divisor $D$ on $X = X^{(6)}$ is $1$-connected, then by Lemma \ref{CON}, we have, $h^0(\mathcal O_D)=1$. Therefore, by applying long exact cohomology sequnce to the following S.E.S : 
\begin{center}
$0 \to \mathcal O_X(-D) \to \mathcal O_X \to \mathcal O_D \to 0$
\end{center}
one has, $h^1(\mathcal O_X(-D))=0$. This means, if $h^1(\mathcal O_X(-D)) \neq 0$, then $D$ is not $1$-connected. \

\end{corollary}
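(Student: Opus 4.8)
The plan is to deduce the vanishing directly from the long exact cohomology sequence attached to the structure sequence of the effective divisor $D$, feeding in the hypothesis of $1$-connectedness only through Lemma \ref{CON}. First I would apply Lemma \ref{CON}: because $D$ is a non-zero effective $1$-connected divisor, it yields $h^0(\mathcal O_D) = 1$.

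Next I would take the long exact cohomology sequence of
\[
0 \to \mathcal O_X(-D) \to \mathcal O_X \to \mathcal O_D \to 0,
\]
namely
\[
0 \to H^0(\mathcal O_X(-D)) \to H^0(\mathcal O_X) \to H^0(\mathcal O_D) \to H^1(\mathcal O_X(-D)) \to H^1(\mathcal O_X),
\]
and then identify each term. Since $C$ is ample and $D \neq 0$ is effective, $C.(-D) < 0$, so $-D$ cannot be effective and $h^0(\mathcal O_X(-D)) = 0$; moreover $h^0(\mathcal O_X) = 1$, and $h^0(\mathcal O_D) = 1$ by the previous step. Finally, the cohomology computation recorded earlier in this section gives $H^1(X, \mathcal O_X) = 0$.

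Putting these together, the restriction map $H^0(\mathcal O_X) \to H^0(\mathcal O_D)$ is an injection between one-dimensional spaces, hence an isomorphism; consequently the connecting homomorphism $H^0(\mathcal O_D) \to H^1(\mathcal O_X(-D))$ vanishes, and $H^1(\mathcal O_X(-D))$ injects into $H^1(\mathcal O_X) = 0$. This forces $h^1(\mathcal O_X(-D)) = 0$, and the final assertion of the statement is just its contrapositive. I do not expect any genuine obstacle here: every ingredient — Lemma \ref{CON}, the vanishing $h^0(\mathcal O_X(-D)) = 0$ coming from ampleness, and $H^1(\mathcal O_X) = 0$ — is already available, and the only point deserving a word of care is that $H^0(\mathcal O_X) \to H^0(\mathcal O_D)$ is an isomorphism rather than merely injective, which is immediate from the dimension count.
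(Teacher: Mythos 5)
Your proposal is correct and follows exactly the argument the paper intends: Lemma \ref{CON} gives $h^0(\mathcal O_D)=1$, and the long exact sequence of the structure sequence, together with $h^0(\mathcal O_X(-D))=0$ and $H^1(\mathcal O_X)=0$ (recorded earlier in Section \ref{S2}), forces the connecting map to vanish and hence $h^1(\mathcal O_X(-D))=0$. You merely make explicit the dimension count that the paper leaves implicit.
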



\section{Initialized and ACM line bundles on sextic surfaces}\label{S3}

In this section, we give a proof of Theorem \ref{T1},  which is divided into two subsections. In the first subsection, we investigate the necessary condition, and in the second subsection, we  show that if a non-zero effective divisor $D$ on $X= X^{(6)}$ satisfy  the necessary condition (i.e., condition $(i)$ and any one of the remaining conditions from $(ii)$ to $(viii)$ of the promised theorem \ref{T1}), then  the  line bundle $\mathcal O_X(D)$ given by it is initialized and ACM.\


\subsection{Necessary Condition}

Let $C$ be a smooth hyperplane section of $X = X^{(6)}$ as before. Let's assume that the line bundle $\mathcal O_X(D)$ given by a non-zero effective divisor $D$ on $X$ is initialized and ACM.  Then consider the following S.E.S :
\begin{align}\label{e2}
0 \to \mathcal O_X(C-D) \to \mathcal O_X(2C-D) \to \mathcal O_C(2C-D) \to 0.
\end{align}
Applying long exact cohomology sequence to the S.E.S \ref{e2} and using Remark \ref{rACM}, one obtains the following equality :
\begin{align}\label{N1}
h^0(\mathcal O_X(2C-D)) = h^0(\mathcal O_X(C-D))+ h^0(\mathcal O_C(2C-D)).
\end{align}
From Remark \ref{L1}, we already know that $h^0(\mathcal O_X(C-D)) \leq 2$. Note that, by remark \ref{R1} we have, $h^0(\mathcal O_C(2C-D)) \leq 8$. This means $h^0(\mathcal O_X(2C-D)) \leq 10$. In what follows we obtain   all possibilities of arithmetic genus $P_a(D)$ and corresponding range of $C.D$  for each such values of $h^0(\mathcal O_X(2C-D)) \in \{0,1,2,...,9,10\}$. In this pursuit our strategy is as follows: We  divide the numbers $0$ to $10$ into five distinct groups and use a unique technique for each one of them. To be more precise, the case $h^0(\mathcal O_X(2C-D))=0$ will be discussed as \textbf{CASE-$(i)$}, the cases $h^0(\mathcal O_X(2C-D))=1,2,3$ will be discussed as \textbf{CASE-$(ii)$}, the cases $h^0(\mathcal O_X(2C-D))=4,5,6$ will be discussed as \textbf{CASE-$(iii)$}, the cases $h^0(\mathcal O_X(2C-D))=7,8$ will be discussed as \textbf{CASE-$(iv)$} and finally it will be established that the remaining two cases can't occur.  \\

\underline{\textbf{CASE-$(i)$ $h^0(\mathcal O_X(2C-D))=0$.}}\

Here, we first analyze what happens  when for a degree $d \geq 5$ surface $X^{(d)}$, one has $h^0(\mathcal O_{X^{(d)}}(d-4)C-D)) =0$. We have from [\cite{H3}, Theorem 3.1 and Corollary 3.5], $1 \leq h^0(\mathcal O_{X^{(d)}}(D)) \leq d$. This can be rewritten as $h^0(\mathcal O_{X^{(d)}}(D)) =d-k$, where $0 \leq k \leq d-1$. By Riemann-Roch Theorem one has:\
 
\begin{center}
$\frac{1}{2}D^2 = d-k + \frac{(d-4)}{2} C.D -1 - \binom{d-1}{3}$.\

Therefore, $P_a(D) = d-k+(d-4)C.D-\binom{d-1}{3}$. \

\end{center}

From Proposition \ref{P1}, we have $P_a(D) \geq 0$, hence we have : 

\begin{center}
$\begin{cases} C.D \geq [\frac{1}{d-4}(\binom{d-1}{3}-1)] +1, &\text{ if } \frac{1}{d-4}(\binom{d-1}{3}-1)  \text{ is not an integer} \\   C.D \geq \frac{1}{d-4}(\binom{d-1}{3}-1), &\text{ otherwise }  \end{cases}$
\end{center}

We also have, $\chi(\mathcal O_{X^{(d)}}((d-3)C-D)) = h^0(\mathcal O_{X^{(d)}}(D)) + \frac{1}{2}d(d-3)-C.D$. By Remark \ref{rACM} and the assumptions of ACMness and initializedness, we have $C.D \leq h^0(\mathcal O_{X^{(d)}}(D))+ \frac{1}{2}d(d-3)$. 


 
  
This means, for $h^0(\mathcal O_{X^{(d)}}(D)) =1$, we have :
\begin{center}
     $P_a(D) =(d-4)C.D+1-\binom{d-1}{3}$ and \
     
   either  $[\frac{1}{d-4}(\binom{d-1}{3}-1)] +1 \leq C.D \leq 1+\frac{1}{2}d(d-3)$ or $\frac{1}{d-4}(\binom{d-1}{3}-1) \leq C.D \leq 1+\frac{1}{2}d(d-3)$.\ 
\end{center}

     Consider  $h^0(\mathcal O_{X^{(d)}}(D)) \geq 2$. Then by S.E.S \ref{d1} and assumptions of initializedness and ACMness, one has $h^0(\mathcal O_{X^{(d)}}(D))= h^0(\mathcal O_C(D)) = d-k \geq 2$. By Remark \ref{R1}, we have, $C.D \geq 2d-k-3$. This means, for $h^0(\mathcal O_{X^{(d)}}(D)) \geq 2 $, we have :
  
\begin{center}   
       $P_a(D) =(d-4)C.D+ d-k-\binom{d-1}{3}$ and $2d-k-3 \leq C.D \leq \frac{1}{2}d^2-\frac{1}{2}d-k$. 
 \end{center}
       
       Note that, for $d=5$, we recover \cite{W5}, Lemma $4.3$. In particular, when $d=6$, we obtain the following :
\begin{center}
for $h^0(\mathcal O_X(D)) =1$ : $P_a(D)= 2C.D-9, 5 \leq C.D \leq 10$ and\

for $h^0(\mathcal O_X(D)) \geq 2$: $P_a(D)= 2C.D- 4-k, 9-k \leq C.D \leq 15-k$\
        
\end{center}
        
         We complete the analysis of this case (for $d=6$) by mentioning that some possibilities  can't occur.\

Note that, for each $k$, one has from the assumption of ACMness and initializedness, $h^0(\mathcal O_C(D)) = h^0(\mathcal O_X(D)) =6-k$. In view of that it's not very difficult to see the following :\

\begin{itemize}

\item The cases $k=0, 9 \leq C.D \leq 10$ and $k=1, C.D=8$ are not possible by Theorem \ref{CT}.\

\item  The cases $k=0, C.D=11$, $k=1, C.D =9$ and $k=2, C.D=7$ are not possible. In all of these cases one obtains $\text{Cliff}(\mathcal O_C(D)) = 1$, a contradiction to Lemma \ref{gon}.\

\item The cases $k=1, C.D=10$ and $k=2, 8 \leq C.D \leq 9$ are not possible by Corollary \ref{wH1}.\

\end{itemize}

We represent the remaining possibilities in Table \ref{A0}.\\

\begin{table}\label{A0}
    \centering
    \begin{tabular}{c|c}
     $P_a(D)$   &  $C.D$\\
     \hline \hline
       $2CD-9$  & $5 \leq C.D \leq 10$\\
       $2CD-8$  & $5 \leq C.D \leq 11$\\
       $2CD-7$  & $6 \leq C.D \leq 12$\\
       $2CD-6$  & $10 \leq C.D \leq 13$\\
       $2CD-5$  & $11 \leq C.D \leq 14$\\
       $2CD-4$  & $12 \leq C.D \leq 15$\\
      
    \end{tabular}
    \caption{CASE-$(i)$}
    \label{tab:my_label}
\end{table}

Note that, since in each of the remaining cases we have $|2C-D| \neq \emptyset$, from \ref{N1}, one sees that in each such cases two situations arise, which are either $h^0(\mathcal O_C(2C-D)) \neq 0$ or $|C-D| \neq \emptyset$.\\

\underline{\textbf{CASE-$(ii)$ $ h^0(\mathcal O_X(2C-D))= 3-m$, where $m \in \{0,1,2\}$}}.\



Let's assume $h^0(\mathcal O_C(2C-D)) \neq 0$, then one must have $C.(2C-D) \geq 0$. This means we have $C.D \leq 12$. If $C.D=12$, then $\mathcal O_X(D) \cong \mathcal O_X(2C)$, a contradiction to the assumption that $\mathcal O_X(D)$ is initialized. This forces $C.D \leq 11$.  From the assumption of ACMness, initializedness and Remark \ref{R1}, we have $h^0(\mathcal O_C(D))=h^0(\mathcal O_X(D)) \leq 8$. Therefore, one can write $h^0(\mathcal O_C(D))=h^0(\mathcal O_X(D))=8-r$, where $r \in \{0,1,2,3,4,5,6\}$. For such a fixed $r$, one can write $C.D = 11-r+s$ (by Remark \ref{R1}), where $s \in \{0,...r\}$. For fixed $m, r$ and $s$ in the admissible range, one obtains by assumption and  relation \ref{K}, 
\begin{align}
P_a(D)=23-m-3r+2s. 
\end{align}
Note that, for any $m \in \{0,1,2\}$, the following possibilities can't occur:\
\begin{itemize}
\item The cases $r=0$, $1 \leq r \leq 2$ with $0 \leq s \leq 1$ and $r=3$ with $s=0$ are not possible by Theorem \ref{CT}.\

\item The cases $r=2$ with $s=2$, $r=3$ with $s=1$ and $r=4$ with $s=0$ are not possible. In all three cases one obtains $\text{Cliff}(\mathcal O_C(D)) = 1$, a contradiction to Lemma \ref{gon}.\

\item The cases $r=3$ with $s=2$ and $r=4$ with $1 \leq s \leq 2$ are not possible by Corollary \ref{wH1}.\
\end{itemize}

Next we consider  the case,  $h^0(\mathcal O_C(D))=h^0(\mathcal O_X(D))=1$. Since $C.D \leq 11$, from  relation \ref{K}, one obtains
\begin{align}
P_a(D)=2C.D-6-m \leq 16-m. 
\end{align}

Note that, for any $m \in \{0,1,2\}$, the following possibilities can't occur:\
\begin{itemize}
\item The case $P_a(D)= 16-m$ is not possible. Because in that case $C.(2C-D)=1$ implies by Lemma \ref{L2}, one must have $(2C-D)^2 \geq -4$, a contradiction.\

\item The case $P_a(D)=14-m$ is not possible. Note that, as $C.(2C-D)=2$, if we take $\Gamma \in |2C-D|$, then by Lemma \ref{L3},in each case one must obtain that there exists a curve $\Gamma_1$ on $X$ with $C.\Gamma_1 =1$, $\Gamma^2_1 \geq -4$ such that $\Gamma=2.\Gamma_1$. This means for $m=2$, $4\Gamma_1^2 =-14$, a contradiction. For $m=1$ this gives us $\Gamma_1^2=-3$ and hence $P_a(\Gamma_1)$ is a fraction, a contradiction. Finally for $m=0$, this forces $4.\Gamma^2_1=-10$, a contradiction.
\end{itemize}

Note that, since $\mathcal O_X(D)$ is ACM, by Proposition \ref{P1}, we have $P_a(D) \geq 0$. In this situation (i.e. when $h^0(\mathcal O_C(D))=1$), we are left with the following possibilities:
\begin{align*}
&\text{for} \thickspace m=0, P_a(D)=2i \thickspace \text{and} \thickspace C.D=3+i, \thickspace \text{where} \thickspace i \in \{0,1,2,3,4,5,6\}\\
& \text{for}\thickspace m=1, P_a(D)=2i-1\thickspace \text{and} \thickspace C.D=3+i, \thickspace \text{where} \thickspace i \in \{1,2,3,4,5,6\}\\
& \text{for} \thickspace m=2, P_a(D)=2i \thickspace \text{and} \thickspace C.D=4+i, \thickspace \text{where} \thickspace i \in \{0,1,2,3,4,5\}\
\end{align*}

Next let's consider the case $|C-D| \neq \emptyset$. From  the S.E.S \ref{TES} and the assumption of ACMness one has $h^0(\mathcal O_C(C-D)) \neq 0$. This forces $C.D \leq 6$.  If $C.D=6$, then $\mathcal O_X(D) \cong \mathcal O_X(C)$, a contradiction to the assumption that $\mathcal O_X(D)$ is initialized. This forces $C.D \leq 5$.  From the assumption of ACMness, initializedness and Remark \ref{R1}, we have $h^0(\mathcal O_C(D))=h^0(\mathcal O_X(D)) \leq 2$. Note that, the case $h^0(\mathcal O_C(D))=2, C.D=5$ is not possible. This is because, in that case $C.(C-D)=1$, which implies by Lemma \ref{L2}, $(C-D)^2 \geq -4$, a contradiction (for each values of $m$). Therefore, we are left with the case $h^0(\mathcal O_C(D))=1$. Note that, as before in this situation one finds using $C.D \leq 5$ and relation \ref{K},
\begin{align}
P_a(D)=2C.D-6-m \leq 4-m. 
\end{align} 
Note that, for any $m \in \{0,1,2\}$ the case $P_a(D)= 4-m$ is not possible. Because in that case $C.(C-D)=1$, which implies by Lemma \ref{L2}, one must have $(C-D)^2 \geq -4$, a contradiction.\

Again as before Since $\mathcal O_X(D)$ is ACM, by Proposition \ref{P1}, we have $P_a(D) \geq 0$. In this situation (i.e. when $h^0(\mathcal O_C(D))=1$), we are left with the following possibilities:
\begin{align*}
&\text{for} \thickspace m=0, P_a(D)=2i \thickspace \text{and} \thickspace C.D=3+i, \thickspace \text{where} \thickspace i \in \{0,1\}\\
& \text{for}\thickspace m=1, P_a(D)=2i-1\thickspace \text{and} \thickspace C.D=3+i, \thickspace \text{where} \thickspace i=1\\
& \text{for} \thickspace m=2, P_a(D)=2i \thickspace \text{and} \thickspace C.D=4+i, \thickspace \text{where} \thickspace i=0\
\end{align*}

We represent the possibilities that are obtained from the above analysis in the  Tables \ref{A1} (for $m=2$), \ref{A2} (for $m=1$), and \ref{A3} (for $m=0$). \\

\begin{table}\label{A1}
    \centering
    \begin{tabular}{c|c}
     $P_a(D)$   &  $C.D$\\
     \hline \hline
       
       $2CD-8$  & $4 \leq C.D \leq 9$\\
       $2CD-7$  & $5 \leq C.D \leq 11$\\
       $2CD-6$  & $6 \leq C.D \leq 11$\\
       $2CD-5$  & $10 \leq C.D \leq 11$\\
       $2CD-4$  & $ C.D = 11$\\
      
    \end{tabular}
    \caption{CASE-$(ii)$, $m=2$}
    \label{tab:my_label}
\end{table}

\begin{table}\label{A2}
    \centering
    \begin{tabular}{c|c}
     $P_a(D)$   &  $C.D$\\
     \hline \hline

       $2CD-7$  & $4 \leq C.D \leq 9$\\
       $2CD-6$  & $5 \leq C.D \leq 11$\\
       $2CD-5$  & $6 \leq C.D \leq 11$\\
       $2CD-4$  & $10 \leq C.D \leq 11$\\
       $2C.D-3$ & $ C.D = 11$\\
      
    \end{tabular}
    \caption{CASE-$(ii)$, $m=1$}
    \label{tab:my_label}
\end{table}

\begin{table}\label{A3}
    \centering
    \begin{tabular}{c|c}
     $P_a(D)$   &  $C.D$\\
     \hline \hline

       $2CD-6$  & $3 \leq C.D \leq 9$\\
       $2CD-5$  & $5 \leq C.D \leq 11$\\
       $2CD-4$  & $6 \leq C.D \leq 11$\\
       $2CD-3$  & $10 \leq C.D \leq 11$\\
       $2CD-2$  & $ C.D = 11$\\
      
    \end{tabular}
    \caption{CASE-$(ii)$, $m=0$}
    \label{tab:my_label}
\end{table}

\underline{\textbf{CASE-$(iii)$ $h^0(\mathcal O_X(2C-D))=6-m$, where $m \in \{0,1,2\}$}.}\

Note that, in this situation we have from  Lemma \ref{L1} and  relation \ref{N1} that $h^0(\mathcal O_C(2C-D)) \geq 4-m \geq 2$. From Remark \ref{R1}, we see that $C.(2C-D) \geq h^0(\mathcal O_C(2C-D)) +3 \geq 7-m$. This means in this case we have, $1 \leq C.D \leq 5+m$. From the assumption of ACMness, initializedness and Remark \ref{R1}, we have $h^0(\mathcal O_C(D))=h^0(\mathcal O_X(D)) \leq 2+m$. Therefore, one can write $h^0(\mathcal O_C(D))= h^0(\mathcal O_X(D))=2+m-r$, where $r \in \{0,....,m\}$. For  fixed $m, r$, one can write $C.D = 5+m-r+s$ (by Remark \ref{R1}), where $s \in \{0,...r\}$. For fixed $m, r$ and $s$ in the admissible range, one obtains by assumption and the relation \ref{K}, 
\begin{align}
P_a(D)=8-3r+2m+2s. 
\end{align}
Note that, for  $m=2$, the possibilitiy $r=0$ can't occur. Indeed, in this case, one obtains $\text{Cliff}(\mathcal O_C(D))=1$, a contradiction to Lemma \ref{gon}.\ 

Next we consider  the case  $h^0(\mathcal O_C(D))= h^0(\mathcal O_X(D))=1$. Since $C.D \leq 5+m$, from the relation \ref{K}, one obtains
\begin{align}
P_a(D)=2C.D-3-m \leq 7+m. 
\end{align}

Note that, for  $m \in \{0,1,2\}$, $P_a(D) \neq 7+m, 5+m$ and for $m \in \{1,2\}$, $P_a(D) \neq 3+m$. These can be realized by the following arguments.\
\begin{itemize}
\item Note that, for  $P_a(D)=7+m$, one can show that for $m \in \{1,2\}$, $|C-D| = \emptyset$. Indeed this can be seen for $m=1$ from initializedness, for $m=2$ from the observation $C.(C-D) <0$ and the S.E.S \ref{TES} (and using the assumptions of ACMness and initializedness). This forces $h^0(\mathcal O_C(2C-D)) = 6-m$ (by \ref{N1}), a contradiction by Remark \ref{R1}. For $m=0$, by Remark \ref{R1} and the S.E.S \ref{TES}, we must have $h^0(\mathcal O_X(C-D)) \leq 1$. This forces $h^0(\mathcal O_C(2C-D)) \geq 5$ (by \ref{N1}), a contradiction by Remark \ref{R1}.\

\item Note that, for $P_a(D)=5+m$, we see that for $m=2$, one has by initializedness $|C-D| = \emptyset$, which implies $h^0(\mathcal O_C(2C-D)) =4$ (by \ref{N1}), a contradiction by Remark \ref{R1}. For $m=1$, we have by Remark \ref{R1} and the S.E.S \ref{TES}, $h^0(\mathcal O_X(C-D)) \leq 1$ and hence $h^0(\mathcal O_C(2C-D)) \geq 4$ (by \ref{N1}). This forces $\text{Cliff}(\mathcal O_C(2C-D)) \leq 1$, a contradiction by Lemma \ref{gon}. For $m=0$, again by Remark \ref{R1} and the S.E.S \ref{TES}, we have $h^0(\mathcal O_X(C-D)) \leq 1$. This implies $h^0(\mathcal O_C(2C-D)) \geq 5$ (by \ref{N1}), a contradiction by Theorem \ref{CT}.\ 

\item Note that, for $P_a(D)=3+m$, we observe that for $m=2$, by Lemma \ref{L2}, one obtains $|C-D| = \emptyset$ and hence $h^0(\mathcal O_C(2C-D)) = 4$ (by \ref{N1}). This forces $\text{Cliff}(\mathcal O_C(2C-D)) = 1$, a contradiction by Lemma \ref{gon}. For $m=1$, one has by Remark \ref{R1} and the S.E.S \ref{TES}, $h^0(\mathcal O_X(C-D)) \leq 1$ and hence $h^0(\mathcal O_C(2C-D)) \geq 4$ (by \ref{N1}), a contradiction by Corollary \ref{wH1}.\
\end{itemize}

Observe that, Since $\mathcal O_X(D)$ is ACM, by Proposition \ref{P1}, we have $P_a(D) \geq 0$. In this situation (i.e. when $h^0(\mathcal O_C(D))=1$), we are left with the following possibilities:
\begin{align*}
&\text{for} \thickspace m=0, P_a(D)=2i-1 \thickspace \text{and} \thickspace C.D=1+i, \thickspace \text{where} \thickspace i \in \{1,2\}\\
& \text{for}\thickspace m=1, P_a(D)=2i \thickspace \text{and} \thickspace C.D=2+i, \thickspace \text{where} \thickspace i \in \{0,1\}\\
& \text{for} \thickspace m=2, P_a(D)=2i-1 \thickspace \text{and} \thickspace C.D=2+i, \thickspace \text{where} \thickspace i \in \{1,2\}\
\end{align*}

We can compactly rewrite the possibilities that arise from  above calculations in  tabular form.(Table \ref{A4} for $m=2$, Table \ref{A5} for $m=1$, Table \ref{A6} for $m=0$)\\


\begin{table}\label{A4}
    \centering
    \begin{tabular}{c|c}
     $P_a(D)$   &  $C.D$\\
     \hline \hline

       $2CD-5$  & $3 \leq C.D \leq 4$\\
       $2CD-4$  & $5 \leq C.D \leq 7$\\
       $2CD-3$  & $6 \leq C.D \leq 7$\\
       
    \end{tabular}
    \caption{CASE-$(iii)$, $m=2$}
    \label{tab:my_label}
\end{table}



\begin{table}\label{A5}
    \centering
    \begin{tabular}{c|c}
     $P_a(D)$   &  $C.D$\\
     \hline \hline

       $2CD-4$  & $2 \leq C.D \leq 3$\\
       $2CD-3$  & $5 \leq C.D \leq 6$\\
       $2CD-2$  & $C.D=6$\\
       
    \end{tabular}
    \caption{CASE-$(iii)$, $m=1$}
    \label{tab:my_label}
\end{table}



\begin{table}\label{A6}
    \centering
    \begin{tabular}{c|c}
     $P_a(D)$   &  $C.D$\\
     \hline \hline

       $2CD-3$  & $2 \leq C.D \leq 3$\\
       $2CD-2$  & $C.D=5$\\
       
    \end{tabular}
    \caption{CASE-$(iii)$, $m=0$}
    \label{tab:my_label}
\end{table}

\underline{\textbf{CASE-$(iv)$ $h^0(\mathcal O_X(2C-D))=8-m$, where $m \in \{0,1\}$}.}\

Consider the situation $h^0(\mathcal O_X(C-D))=2$. In this case by the assumption of initializedness, Remark \ref{R1} and the S.E.S \ref{TES}, we have, $h^0(\mathcal O_C(C-D))=2$. This means by Remark \ref{R1}, $C.D=1\footnote{\text{In this case it is easy to see that, we have}, $h^0(\mathcal O_C(D))=h^0(\mathcal O_X(D))=1$}$ and hence by relation \ref{K},
\begin{align}
P_a(D)=1-m.\
\end{align}

Next let's consider the situation $h^0(\mathcal O_X(C-D))=2-j$, where $j\in \{1,2\}$. In this case by relation \ref{N1}, we obtain $h^0(\mathcal O_C(2C-D))=6-m+j \geq 2$. By Remark \ref{R1}, this implies, $C.D \leq 3+m-j$. Therefore for a fixed $m$ and $j$ in the admissible range we can write $C.D =3+m-j-l$, where $l \in \{0,1,2\}\footnote{\text{In this representation we must have} $C.D \geq 1$. \text{ Therefore, under this notation the following situations don't occur}\
\begin{itemize}
\item For $m=0,j=1$, $l$ can't be $2$ and for $m=0,j=2$, $l$ can't be $1$ or $2$.\
\item For $m=1,j=2$, $l$ can't be $2$.
\end{itemize}}$. Also note that, in this situation one must have by Remark \ref{R1} and the assumptions of ACMness and initializedness, $h^0(\mathcal O_X(D))=h^0(\mathcal O_C(D))=1$. Thus from relation \ref{K}, one obtains:
\begin{align*}
P_a(D)=5+m-2j-2l.
\end{align*}

Therefore, the possibilities that arise out of the above analysis are as follows:
\begin{align*}
& \text{for} \thickspace m=1, \thickspace P_a(D)=2C.D-2 \thickspace \text{and} \thickspace 1 \leq C.D \leq 3. \\
& \text{for} \thickspace m=0, \thickspace P_a(D)=2C.D-1 \thickspace \text{and} \thickspace 1 \leq C.D \leq 2.\\
\end{align*}

Next we will show that the cases \underline{\textbf{$h^0(\mathcal O_X(2C-D))=10-m$, where $m \in \{0,1\}$}} are not possible. Note that, in this situation by ampleness of $C$, \ref{N1} and  Remark \ref{R1}, we must have  $h^0(\mathcal O_X(C-D)) \neq 0, 1-m$. Now consider the situation $h^0(\mathcal O_X(C-D))=2$. In this case by the assumption of initializedness, Remark \ref{R1} and the S.E.S \ref{TES}, we have, $h^0(\mathcal O_C(C-D))=2$. This means by Remark \ref{R1}, $C.D=1$ and hence by relation \ref{K},
\begin{align}
P_a(D)=3-m.\
\end{align}
Next Let's consider the case $h^0(\mathcal O_X(C-D))=1$ (Observe that this can only happen when $m=1$). Then from  \ref{N1} and Remark \ref{R1}, one obtains $C.D=1$ (and hence $h^0(\mathcal O_C(D))=1$). Therefore, from relation \ref{K}, one obtains $P_a(D)=2$. Thus, the possibilities that arise out of the above analysis are as follows:
\begin{align*}
& \text{for} \thickspace m=1, \thickspace P_a(D)=2C.D \thickspace \text{and} \thickspace  C.D =1. \\
& \text{for} \thickspace m=0, \thickspace P_a(D)=2C.D+1 \thickspace \text{and} \thickspace  C.D=1.
\end{align*}

These in particular tell us that in each case we have $D^2=2(1-m)$. Therefore, by Riemann-Roch Theorem we obtain:\
\begin{align*}
h^0(\mathcal O_X(C+D))+h^0(\mathcal O_X(C-D)) =\chi(\mathcal O_X(C+D))=9-m
\end{align*}

This means by Remark \ref{L1}, $h^0(\mathcal O_X(D+C)) \geq 7-m$. Applying the long exact cohomology sequence to the  following S.E.S:

\begin{align}
0 \to \mathcal O_X(D) \to \mathcal O_X(D+C) \to \mathcal O_C(D+C) \to 0
\end{align} 

and using the assumption of ACMness of initializedness, one obtains $h^0(\mathcal O_C(C+D)) \geq 6-m \geq 2$. This forces by Remark \ref{R1}, $m\geq 2$, a contradiction.\\

Now that we have analyzed all the cases and obtained all the possibilities of $P_a(D)$ and $C.D$, we further move onto improving the scenario of all possibilities. To be more precise, in what follows next we show that certain cases that obtained as possibilities in previous analysis can't occur.\\

We start by showing that the case $P_a(D)=2C.D-1$ is not possible. Note that, in this case we had $C.D=1+i$, where $i \in \{0,1\}$. In this situation it is easy to see that $h^0(\mathcal O_C(D))=h^0(\mathcal O_X(D))=1, h^0(\mathcal O_X(C-D)) \leq 2-i$. Therefore, by relations \ref{N1} and \ref{K}, we end up having  $h^0(\mathcal O_C(2C-D)) \geq 6+i$. This amounts to $\text{Cliff}(\mathcal O_C(2C-D)) \leq 1$, a contradiction to Lemma \ref{gon}.\\


Next we will establish that in case $P_a(D)=2C.D-2$ except $C.D=1$, the other possibilities can't occur.\

Consider the possibility $C.D=11$. Note that, in this situation one has, $h^0(\mathcal O_X(D))=h^0(\mathcal O_C(D)) \leq 6$. Otherwise from \ref{RRC} we have $h^0(\mathcal O_C(3C-D)) \geq 5$, a contradiction by Remark \ref{R1}. Then from  relation \ref{K}, one obtains $h^0(\mathcal O_X(2C-D)) \geq 2$, a contradiction by Lemma \ref{L2}.\

Consider the possibilities $C.D=5+i$, where $i \in \{0.1\}$. By Remark \ref{R1}, we have $h^0(\mathcal O_C(D)) \leq 2+i$. Also it's easy to see that $h^0(\mathcal O_X(C-D)) \leq 1-i \footnote{ \text{For} i=1, \text{this follows from initializedness and for} i=0 \text{this follows by Remark \ref{R1} }}$. Then from  relations \ref{K} and \ref{N1}, we end up having $h^0(\mathcal O_C(2C-D))\geq 5$, a contradiction by Remark \ref{R1}.\

Consider the possibilities $C.D=2+i$, where $i \in \{0.1\}$. By Remark \ref{R1}, we have $h^0(\mathcal O_C(D))= h^0(\mathcal O_X(D))=1, h^0(\mathcal O_X(C-D)) \leq 1$.  Therefore, by relations \ref{K} and \ref{N1}, we obtain $h^0(\mathcal O_C(2C-D))\geq 6$, a contradiction by Theorem \ref{CT}.\\

Next we will show that the case $P_a(D)=2C.D-3$, can't occur.\

Consider the possibility $C.D=11$. Note that, in this case we have $h^0(\mathcal O_C(D)) \geq 7$ (Observe that, if $|2C-D| \neq \emptyset$, then this can be realized from Lemma \ref{L2} and the S.E.S \ref{d1} and if $|2C-D| = \emptyset$, then it follows from  relation \ref{K} and the short exact sequence \ref{d1}), a contradiction by Theorem \ref{CT}.\

Consider the possibility $C.D=10$.  Note that, in this situation one has, $h^0(\mathcal O_X(D))=h^0(\mathcal O_C(D)) \leq 6$. Otherwise from \ref{RRC} we have $h^0(\mathcal O_C(3C-D)) \geq 6$, a contradiction by Remark \ref{R1}. Also by Theorem \ref{CT}, $h^0(\mathcal O_C(D)) \neq 6$. Then from  relations \ref{K} and \ref{N1}, we  have $h^0(\mathcal O_C(2C-D))\geq 2$, a contradiction by Remark \ref{R1}.\

Next let's consider the possibilities $C.D =6+i$,  where $i \in \{0,1\}$. In this situation by Remark \ref{R1}, we have $h^0(\mathcal O_C(D))\leq 3+i$. This coupled with relation \ref{K} gives us $h^0(\mathcal O_X(2C-D)) \geq 4-i$. It's not difficult to see that in both cases we have $h^0(\mathcal O_X(C-D))=0$. This forces by \ref{N1}, $h^0(\mathcal O_C(2C-D)) \geq 4-i$, a contradiction by Remark \ref{R1}.\

Now consider the possibilities $C.D=2+i$, where $i \in \{0,1,3\}$. By Remark \ref{R1}, we have the following : for $i=3$, $h^0(\mathcal O_X(D))=h^0(\mathcal O_C(D)) \leq 2$, for $i \in \{0,1\}$, $h^0(\mathcal O_C(D))=h^0(\mathcal O_X(D))=1$ and for all values of $i$, $h^0(\mathcal O_X(C-D)) \leq 1$. Therefore, from relations \ref{K} and  \ref{N1}, we have for $i=3$, $h^0(\mathcal O_C(2C-D)) \geq 4$ and  for $i \in \{0,1\}$, $h^0(\mathcal O_C(2C-D)) \geq 5$. For $i \in \{1,3\}$,  this means $\text{Cliff}(\mathcal O_C(2C-D)) \leq 1$, a contradiction to Lemma \ref{gon}. By the same argument for $i=0$, $h^0(\mathcal O_C(2C-D))$ can't be $\geq 6$, whence we have $h^0(\mathcal O_C(2C-D)) =5$, a contradiction to Corollary \ref{wH1}.\\

Next we show that for $P_a(D)=2C.D-4$ the possibilities $C.D=11,10,9,8,7,3$ can't occur.\

For $C.D=11$, one can argue as in the previous case (i.e., $P_a(D)=2C.D-3$ and $C.D=11$) to obtain in any case $h^0(\mathcal O_C(D)) \geq 6$. This forces $\text{Cliff}(\mathcal O_C(D)) \leq 1$, a contradiction to Lemma \ref{gon}.\

Consider the possibilities $C.D=9+i$, where $i \in \{0,1\}$. Then as before by \ref{RRC} and Remark \ref{R1}, one must obtain $h^0(\mathcal O_C(D)) \leq 6$. Note that, by Theorem \ref{CT}, $h^0(\mathcal O_C(D)) \neq 6$. For $i=1$, by Corollary \ref{wH1}, $h^0(\mathcal O_C(D)) \neq 5$ and for $i=0$, by Lemma \ref{gon}, $h^0(\mathcal O_C(D)) \neq 5$. This yields by relations \ref{K} and  \ref{N1}, $h^0(\mathcal O_C(2C-D)) \geq 2$, a contradiction by Remark \ref{R1}.\

Consider the possibilities $C.D=7+i$, where $i \in \{0,1\}$. Then by Remark \ref{R1}, one has $h^0(\mathcal O_C(D)) \leq 4+i$. This implies by relations \ref{K}, \ref{N1} and Remark \ref{R1}, $h^0(\mathcal O_C(2C-D))=h^0(\mathcal O_X(2C-D))=2-i$. This forces $h^0(\mathcal O_C(D))=4+i$, a contradiction by Corollary \ref{wH1}.\

For $C.D=3$, by Remark \ref{R1}, we have $h^0(\mathcal O_C(D))= h^0(\mathcal O_X(D))=1, h^0(\mathcal O_X(C-D)) \leq 1$.  Thus, by relations \ref{K} and \ref{N1}, we end up having $h^0(\mathcal O_C(2C-D))\geq 4$, a contradiction by Corollary \ref{wH1}.\\

Finally  we show that for $P_a(D)=2C.D-5$, the cases $C.D=8,9$ can't occur.\

Here in both the cases using  relations \ref{K},  \ref{N1} and Remark \ref{R1}, one can show that the situation $h^0(\mathcal O_X(D)) \leq 3$ isn't possible. This forces $h^0(\mathcal O_C(D)) \geq 4$, a contradiction by Corollary \ref{wH1}.\\




We summarize the content of the above discussion in the following Theorem.\

\begin{theorem}\label{P3.1}
With the notations as before, If $\mathcal O_X(D)$ is ACM and initialized, then the following conditions are satisfied:\

$(a)$ either $2C.D-9 \leq P_a(D) \leq 2C.D-4$ or $P_a(D)=2C.D-2$.\

$(b)$ If $P_a(D) =2C.D-9$, then $5 \leq C.D \leq 10$.\

$(c)$ If $P_a(D) =2C.D-8$, then $4 \leq C.D \leq 11$.\

$(d)$ If $P_a(D) =2C.D-7$, then $4 \leq C.D \leq 12$.\

$(e)$ If $P_a(D) =2C.D-6$, then $3 \leq C.D \leq 13$.\

$(f)$ If $P_a(D) =2C.D-5$, then $3 \leq C.D \leq 7$ or $10 \leq C.D \leq 14$.\

$(g)$ If $P_a(D) =2C.D-4$, then $ C.D =2$ or $5 \leq C.D \leq 6$ or $12 \leq C.D \leq 15$.\


$(i)$ If $P_a(D) =2C.D-2$, then $ C.D =1$ .\

\end{theorem}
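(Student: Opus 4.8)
The plan is to prove Theorem \ref{P3.1} by assembling the outcome of a case-by-case analysis organized according to the value of $h^0(\mathcal{O}_X(2C-D))$. The starting point is the short exact sequence \ref{e2} together with Remark \ref{rACM}, which yields the additive relation \ref{N1}; combining it with the bound $h^0(\mathcal{O}_X(C-D)) \leq 2$ from Remark \ref{L1} and the gonality bound $h^0(\mathcal{O}_C(2C-D)) \leq 8$ from Remark \ref{R1} shows $h^0(\mathcal{O}_X(2C-D)) \leq 10$. This reduces the problem to the eleven values $h^0(\mathcal{O}_X(2C-D)) \in \{0,1,\dots,10\}$, which I would partition into the five blocks $\{0\}$, $\{1,2,3\}$, $\{4,5,6\}$, $\{7,8\}$ and $\{9,10\}$, treating each block by a uniform method so that the conclusions can be read off and collected into the stated windows $(a)$--$(i)$.

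First I would treat $h^0(\mathcal{O}_X(2C-D))=0$ using the Casanellas--Hartshorne bound $1 \leq h^0(\mathcal{O}_X(D)) \leq 6$, Riemann--Roch \ref{K}, and the non-negativity $P_a(D) \geq 0$ from Proposition \ref{P1}; writing $h^0(\mathcal{O}_X(D))=6-k$ pins down $P_a(D)$ as a linear function of $C.D$ and confines $C.D$ to a finite window, after which Clifford's theorem \ref{CT}, the Clifford/gonality formula \ref{gon} and Corollary \ref{wH1} eliminate the impossible $(k,C.D)$ pairs. For the blocks $\{1,2,3\}$ and $\{4,5,6\}$ I would again use \ref{N1} and \ref{K}, but now branch according to whether $h^0(\mathcal{O}_C(2C-D))\neq 0$ or $|C-D|\neq\emptyset$, bounding $C.D$ from the sections of $2C-D$ via Remark \ref{R1} and invoking Lemmas \ref{L2} and \ref{L3} to kill the borderline genera (those forcing a divisor in $|2C-D|$ or $|C-D|$ with $C$-degree $1$ or $2$ and too negative a self-intersection). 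The block $\{7,8\}$ is dispatched by analysing $h^0(\mathcal{O}_X(C-D))\in\{0,1,2\}$ directly, and the block $\{9,10\}$ is shown to be empty by pushing the same sections through the adjunction sequence for $\mathcal{O}_X(C+D)$ and deriving a contradiction with Remark \ref{R1}.

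Having catalogued all surviving $(P_a(D),C.D)$ pairs, the remaining task is a final elimination pass that tightens the raw windows to exactly those listed in $(a)$--$(i)$: one shows that $P_a(D)=2C.D-1$ and $P_a(D)=2C.D-3$ never occur, that $P_a(D)=2C.D-2$ forces $C.D=1$, and that the extreme values of $C.D$ inside the $2C.D-4$ and $2C.D-5$ strips are excluded. Each such removal runs through the same loop: estimate $h^0(\mathcal{O}_C(D))$ from Remark \ref{R1}, feed it into \ref{K} and \ref{N1} to produce a forced value of $h^0(\mathcal{O}_C(2C-D))$, and contradict either Clifford's theorem \ref{CT}, the sharp gonality bound \ref{gon}, or Corollary \ref{wH1}.

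I expect the principal obstacle to be bookkeeping rather than any single hard idea: the argument is a large but finite decision tree, and the delicate point is selecting, in each borderline subcase, the one sharp inequality (the equality clause of Clifford's theorem \ref{CT}, the exact Clifford index $d-4=2$ in \ref{gon}, or the precise plane-curve bound in \ref{wH1}) that gets violated. The subcases in which $2C-D$ or $C-D$ has small $C$-degree are the most delicate, since there the coarse gonality estimate alone is insufficient and one must appeal to the structural Lemmas \ref{L2} and \ref{L3} to exclude specific self-intersection values such as $-3$, $-10$, or $-14$.
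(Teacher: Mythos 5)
Your proposal follows essentially the same route as the paper's own proof: the same reduction to $h^0(\mathcal O_X(2C-D))\leq 10$ via the relation \ref{N1}, the same partition into the five blocks $\{0\}$, $\{1,2,3\}$, $\{4,5,6\}$, $\{7,8\}$, $\{9,10\}$ with the same methods for each (Casanellas--Hartshorne bound plus Proposition \ref{P1} for the first block, the branching on $h^0(\mathcal O_C(2C-D))\neq 0$ versus $|C-D|\neq\emptyset$ with Lemmas \ref{L2}, \ref{L3} for the middle blocks, and the $\mathcal O_X(C+D)$ sequence to rule out the last block), followed by the same final elimination of $P_a(D)=2C.D-1$, $2C.D-3$, the non-trivial $2C.D-2$ cases, and the endpoints of the $2C.D-4$ and $2C.D-5$ strips. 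The plan is correct and matches the paper's argument in both structure and the specific tools invoked at each delicate subcase.
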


With Theorem \ref{P3.1} at hand we are now ready prove the only if direction of the promised Theorem \ref{T1}.\

$\textit{Proof of the only if part of the Theorem \ref{T1}}$

\begin{proof}

Note that, $(i)$, $(ii)$ and $(iii)(a)$ follows from Theorem \ref{P3.1}. To establish $(iii)(b),(c)$ we argue as follows :\

For $(iii)(b)$  first part Consider $C.D=6$ or  $12 \leq C.D \leq 15$.  Then applying long exact cohomology sequence to the following  S.E.S:\

\begin{align}\label{s1}
0 \to \mathcal O_X(D-2C) \to \mathcal O_X(D-C) \to \mathcal O_C(D-C) \to 0
\end{align}

and from the assumption of ACMness and initializedness, we have $h^0(\mathcal O_C(D-C)) =0$.\

For the second part let $12 \leq C.D \leq 15$. Then consider the following S.E.S :

\begin{align}\label{m1}
0 \to \mathcal O_X(D-3C) \to \mathcal O_X(D-2C) \to \mathcal O_C(D-2C) \to 0
\end{align}

Taking long exact cohomology sequence to the S.E.S \ref{m1} and using the assumption of ACMness and initializedness, one has $h^0(\mathcal O_C(D-2C)) =0$.\

For $(iii)(c)$ first part, note that, for $C.D=12$, one must have $|2C-D|=\emptyset$ (Otherwise $\mathcal O_X(D) \cong \mathcal O_X(2C)$, a contradiction). Then from relation \ref{N1} the desired result follows. For the second part Consider $C.D=15$. Since $C.(2C-D) <0$, we have $h^0(\mathcal O_C(2C-D))=0$ and hence $h^0(\mathcal O_X(2C-D))=0$ (by \ref{N1}). Then from  relation \ref{K}, one has $h^0(\mathcal O_X(D)) = 6$. This means $h^0(\mathcal O_C(D)) = 6$  from the S.E.S \ref{d1}.\\

Next, Consider the case $(iv)$. In this case observe that $(iv)(a)$ follows from Theorem \ref{P3.1} and $(iv)(b)$ first part follows by the same argument as that of $(iii)(b)$ first part.\



For $(iv)(b)$ second part if  $C.D=11$, then by Lemma \ref{L2} and if $C.D=12$, then by the same argument as in $(iii)(c)$ first part, one obtains $|2C-D| = \emptyset$. This means by relation \ref{N1}, $h^0(\mathcal O_C(2C-D))=0$.\

For $(iv)(c)$  first part, note that, by initializedness, one has $|C-D| = \emptyset$. Then from relations \ref{K}, \ref{N1} and S.E.S \ref{d1}, one must have $h^0(\mathcal O_C(D))+h^0(\mathcal O_C(2C-D)) =5$, whence the claim follows. The second part follows from ACMness and the S.E.S \ref{TES}\

For $(iv)(d)$  let $12 \leq C.D \leq 14$. In this case for $13 \leq C.D \leq 14$ from $C.(2C-D) <0$ and relation \ref{N1} it's easy to see that, we have $h^0(\mathcal O_C(2C-D))=h^0(\mathcal O_X(2C-D))=0$. Note that we already have this for $C.D=12$. Then from  relation \ref{K} and the S.E.S \ref{d1}, one obtains $h^0(\mathcal O_C(D)) = 5$.\\

Next, Consider the case $(v)$. In this case observe that $(v)(a)$ follows from Theorem \ref{P3.1}, $(v)(b)$ first part follows by the same argument as that of $(iii)(b)$ first part and $(v)(b)$ second part $11 \leq C.D \leq 12$ follows by the same argument as $(iv)(b)$ second part.\

For $(v)(b)$ second part for $C.D=10$, note that, by relation \ref{N1} and by $C.(C-D) < 0$ it's enough to show that $|2C-D| = \emptyset$. This follows from Lemma \ref{L3}.\



Note that, $(v)(c)$ first part follows from the fact $h^0(\mathcal O_X(D))=1$, relation \ref{N1}, \ref{K} and from S.E.S \ref{TES}. The argument for $(v)(c)$ second part is same as that of $(iv)(c)$ second part.\

It's clear that $(v)(d)$, $(v)(e)$ and $(v)(f)$ first part follows from  S.E.S \ref{d1}, relations \ref{K}, \ref{N1} and observing that in each such case $|C-D| = \emptyset\footnote{\text{For} $C.D=7,$ \text{this follows from} $C.(C-D) <0,$ \text{for} $C.D=6,$ \text{this follows from initializedness and for} $C.D=5,$ \text{this follows from Lemma \ref{L2}.}}$.\

For $(v)(f)$ second part let $11 \leq C.D \leq 13$. Since $C.(2C-D) <0$ for $C.D=13$, we have $h^0(\mathcal O_C(2C-D))=0$. We have the same vanishing for $C.D=11,12$ from $(v)(b)$ second part. Then from relations \ref{K}, \ref{N1}, one has in all  three situations $h^0(\mathcal O_X(D)) = 4$. This means $h^0(\mathcal O_C(D)) = 4$  from short exact sequence \ref{d1}.\\

Now consider the case $(vi)$. Note that, here $(vi)(a)$ follows from Theorem \ref{P3.1}, $(vi)(b)$ follows by the same argument as that of $(v)(b)$. Note here that for $C.D=5$,   $|C-D| = \emptyset$ follows from Lemma \ref{L2}  and for $C.D=6$, $|C-D| = \emptyset$ follows from initializedness. Then $(vi)(c),(d),(e), (f)$ follows from the relations \ref{K}, \ref{N1} and the S.E.S \ref{d1}.\

For $(vi)(g)$,  one obtains for $C.D=4$, $|C-D| = \emptyset$ by Lemma \ref{L3}. Note that, here we have by Remark \ref{R1} and S.E.S \ref{d1}, $h^0(\mathcal O_C(D))=h^0(\mathcal O_X(D))=1$. Then the desired result follows from relations \ref{K} and \ref{N1}.\


Since for $C.D=4,6$, we have $|C-D| =\emptyset$, argument for $(vi)(h)$ first part is same as that of $(iv)(c)$ second part. For $(vi)(h)$ second part observe that $(vi)(b)$ second part, relations \ref{K}, \ref{N1} and S.E.S \ref{d1} give  us the desired condition.\\

Consider the case $(vii)$. As before $(vii)(a)$ follows from Theorem \ref{P3.1}. It can be shown by Lemma \ref{L2} that for $C.D=5$, one has $|C-D|= \emptyset$. Note that, we have the same result for $C.D=6$ by initializedness. Then $(vii)(b)$ first part follows by the same argument as that of $(vi)(c)$.  $(vii)(b)$ second part is clear from initializedness, ACMness, S.E.S \ref{d1} and the relation \ref{K}.\

 For $C.D=4$, it is not difficult to see that $|C-D|= \emptyset$ (by lemma \ref{L3}). Then $(vii)(c)$ first part follows (all three cases) by the same argument as that of $(vi)(h)$ first part. For $(vii)(c)$ second part, note that, if $h^0(\mathcal O_C(2C-D)) \geq 2$, then by relations \ref{K}, \ref{N1}, one obtains $h^0(\mathcal O_X(D))=h^0(\mathcal O_C(D))=0$, a contradiction. \

 $(vii)(d)$ first part follows by noting that $h^0(\mathcal O_X(D))=1$ and using relations \ref{K}, \ref{N1}. To establish $(vii)(d)$ second part we argue as before that in both the situations one has $h^0(\mathcal O_C(2C-D))=0$. This forces by relations \ref{K}, \ref{N1}, S.E.S \ref{d1} the desired condition.\\

Finally let's consider the case $(viii)$. Note that $(viii)(a)$ first part follows from  Theorem \ref{P3.1}.  For $C.D=5$ one can show by Lemma \ref{L2} that $|C-D| = \emptyset$. The same observation can be made for $C.D=6$ by initializedness. Then it can be seen that  $(viii)(a)$ second part follows by relation \ref{K}, \ref{N1} and the S.E.S \ref{d1}. The last condition $(viii)(b)$ follows by the same argument as $(vii)(c)$ first part. \\

\end{proof}

\subsection{Sufficient Condition}

In this section we give a proof of the "if" part of the Theorem \ref{T1}. Let
 $X \subset \mathbb P^3$ be a smooth sextic hypersurface. Let $H$ be the hyperplane class of $X$ and $C \in |H|$ be a smooth member as before. Let $D$ be a non-zero effective divisor on $X$ of arithmetic genus $P_a(D)$. Let $D$ satisfy the condition $(i)$ and any one of the remaining conditions (i.e., from $(ii)$ to $(viii)$ with the restrictions mentioned under the condition) of the promised Theorem \ref{T1}. Our first aim to show that $\mathcal O_X(D)$ is initialized.\\
 
$\textit{Proof of the  if part of the Theorem \ref{T1}}$\

\begin{proof}

\textit{\textbf{Initializedness}}

Note that, for any value of arithmetic genus of $D$,  if $C.D \leq 5$, then initializedness follows from the observation  $C.(D-C) <0$.\

Note that, irrespective of the value of the arithmetic genus of $D$, one has for $9 \leq C.D \leq 11$, $C.(D-2C)<0$ and hence $h^0(\mathcal O_X(D-2C))=0$. Therefore, in these cases it's enough to have $h^0(\mathcal O_C(D-C))=0$ (because in that case initializedness follows from by applying long exact cohomology sequnce to the S.E.S \ref{s1}). Now for $9 \leq C.D \leq 11$, this is there by assumptions $(iv)(b),(v)(b), (vi)(b)$ and by the observation that in remaining cases $h^0(\mathcal O_C(D))<3$. For $C.D=6$, if $|D-C| \neq \emptyset$, then we have $\mathcal O_X(D)) \cong \mathcal O_X(C)$, a contradiction (as then $P_a(D)=10$).\

Again for any value of arithmetic genus of $D$, for $C.D =7$, $|D-C| = \emptyset$ can be obtained by applying Lemma \ref{L2}, to the divisor $D-C$.\

Similarly for the cases $(vi),(vii),(viii)$, If $C.D =8$, $|D-C| = \emptyset$ can be obtained by applying Lemma \ref{L3}, to the divisor $D-C$. For the case $(v)$, if $C,D=8$, then initiazedness can be realized by the same argument as that of the situations $9 \leq C.D \leq 11$.\

Finally consider the cases,  $12 \leq C.D \leq 15$. Note that, in each such cases we  have $C.(D-3C) <0$ and $h^0(\mathcal O_C(D-2C))=0$ ( for $12 \leq C.D \leq 15$, this follows either by the assumption $(iii)(b)$ second part or by noting that $h^0(\mathcal O_C(D)) <6$). Then from the  S.E.S \ref{m1}, one has $h^0(\mathcal O_X(D-2C))=0$. Finally applying long exact cohomology sequence to the S.E.S \ref{s1} and using $h^0(\mathcal O_C(D-C))=0$, we have $h^0(\mathcal O_X(D-C))=0$ ( Note here that, $h^0(\mathcal O_C(D-C))=0$ is either there by assumptions $(iii)(b),(iv)(b),(v)(b),(vi)(b)$ or can be seen by noting that in the remaining case one has $h^0(\mathcal O_C(D)) <3$).\\

To prove  ACMness, we need a Proposition which deals with the vanishing of $h^1(\mathcal O_X(-D))$ for effective divisors with low values of $C.D$. This Proposition uses a consequence of a non-zero effective divisor $D$ being not $1$-connected.\

\begin{proposition}\label{VAN}
Let $D$ be a non-zero effective divisor on a smooth sextic surface $X \subset \mathbb P^3$ such that $h^1(\mathcal O_X(D))=0$ and $h^0(\mathcal O_X(D-C))=0$. Then we have, $h^1(\mathcal O_X(-D))=0$, if the following conditions are satisfied:

$(i)$ $2C.D-7 \leq P_a(D) \leq   2C.D-5$, $C.D=5$.\

$(ii)$ $P_a(D)=2C.D-4$, $C.D=6$.\



\end{proposition}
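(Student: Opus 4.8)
The plan is to prove the stronger, purely geometric statement that $D$ is $1$-connected, which suffices. Indeed, from the structure sequence $0 \to \mathcal O_X(-D) \to \mathcal O_X \to \mathcal O_D \to 0$ together with $h^0(\mathcal O_X(-D))=0$ (as $C.D>0$) and $h^1(\mathcal O_X)=0$, one gets $h^1(\mathcal O_X(-D)) = h^0(\mathcal O_D)-1$, so by Lemma \ref{CON} it is enough to show that $D$ is $1$-connected. Accordingly I assume for contradiction that $D$ is not $1$-connected and fix an effective decomposition $D = D_1 + D_2$ with $D_1, D_2$ non-zero and $D_1.D_2 \le 0$.

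Setting $a_i := C.D_i$, ampleness of $C$ gives $a_1,a_2 \ge 1$ and $a_1 + a_2 = C.D$. The hypothesis $h^0(\mathcal O_X(D-C))=0$ propagates to each summand: if some $|D_i - C| \neq \emptyset$, then $D - C = (D_i - C) + D_j$ would be effective, a contradiction; hence each $D_i$ is initialized, i.e. $h^0(\mathcal O_X(D_i - C))=0$. The adjunction-theoretic genus identity $P_a(D) = P_a(D_1) + P_a(D_2) + D_1.D_2 - 1$ is purely numerical, so $D_1.D_2 \le 0$ gives $P_a(D) \le P_a(D_1)+P_a(D_2)-1$. A contradiction will therefore follow once $P_a(D_i)$ is bounded from above sharply enough.

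The heart of the argument is the following bound for an initialized effective divisor $E$ with $C.E = a$ small. From $0 \to \mathcal O_X(E-C) \to \mathcal O_X(E) \to \mathcal O_C(E) \to 0$ and $h^0(\mathcal O_X(E-C))=0$ I get $h^0(\mathcal O_X(E)) \le h^0(\mathcal O_C(E))$, which Proposition \ref{H1} (equivalently Corollary \ref{wH1}) bounds in terms of $a$. Feeding this into the Riemann--Roch relation \ref{K}, which reads $P_a(E) = h^0(\mathcal O_X(E)) + h^0(\mathcal O_X(2C-E)) - h^1(\mathcal O_X(E)) + 2a - 10$, and bounding $h^0(\mathcal O_X(2C-E)) \le h^0(\mathcal O_X(C-E)) + h^0(\mathcal O_C(2C-E))$ via Remark \ref{L1} (for the $X$-term) and Corollary \ref{wH1} (for the $C$-term), I expect to obtain $P_a(E) \le 0$ for $a \in \{1,2\}$, $P_a(E) \le 1$ for $a=3$, $P_a(E)\le 3$ for $a = 4$, and $P_a(E) \le 6$ for $a=5$. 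For $a=1$ one moreover has $P_a(E)\ge 0$ by Lemma \ref{L2}, so $P_a(E)=0$.

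Finally I run over the finitely many splittings of $C.D$. For $C.D=5$ the only possibilities are $(a_1,a_2)=(1,4)$ and $(2,3)$, giving $P_a(D) \le (0+3)-1 = 2$ and $P_a(D)\le (0+1)-1 = 0$ respectively; since case $(i)$ forces $P_a(D) \ge 2C.D-7 = 3$, this is a contradiction. For $C.D=6$ the splittings $(1,5),(2,4),(3,3)$ yield $P_a(D)\le 5,\,2,\,1$ respectively, contradicting $P_a(D)=8$ in case $(ii)$. Hence no such decomposition exists, $D$ is $1$-connected, and $h^1(\mathcal O_X(-D))=0$. The main obstacle is the third step, namely making the upper bounds on $P_a(D_i)$ tight enough to beat the prescribed $P_a(D)$: this requires applying Proposition \ref{H1} simultaneously to $\mathcal O_C(E)$ and to $\mathcal O_C(2C-E)$ and carefully controlling $h^0(\mathcal O_X(C-E))$ for the larger pieces $a=4,5$. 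I note that the hypothesis $h^1(\mathcal O_X(D))=0$ is not actually needed for this particular vanishing, since only the trivial inequality $h^1(\mathcal O_X(D_i))\ge 0$ enters the estimates.
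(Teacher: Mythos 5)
Your proposal is correct, and it takes a genuinely different route from the paper's. Both arguments reduce the vanishing to showing that $D$ admits no effective decomposition $D=D_1+D_2$ with $D_1.D_2\le 0$ (via Lemma \ref{CON} and the structure sequence of $D$), but from there they diverge. The paper normalizes so that $D.D_1\le \frac{D^2}{2}$, derives a lower bound for $\chi(\mathcal O_X(D_2))$ in terms of $h^0(\mathcal O_X(D))+h^0(\mathcal O_X(2C-D))$ and the intersection numbers (its inequality \ref{3.6}), and then contradicts the upper bounds on $h^0(\mathcal O_C(2C-D_2))$ and $h^0(\mathcal O_X(C-D_2))$ coming from Corollary \ref{wH1} and Remark \ref{R1}; that route genuinely uses $h^1(\mathcal O_X(D))=0$, since it converts $h^0(\mathcal O_X(D))$ into $\chi(\mathcal O_X(D))$. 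You instead note that $h^0(\mathcal O_X(D-C))=0$ forces $h^0(\mathcal O_X(D_i-C))=0$ for each summand, bound $P_a(D_i)$ from above by applying relation \ref{K} to $D_i$ itself together with $h^1\ge 0$, Remark \ref{R1}, Remark \ref{L1} and Corollary \ref{wH1}, and conclude via the additivity $P_a(D)=P_a(D_1)+P_a(D_2)+D_1.D_2-1$. I checked your claimed bounds and they do hold: for an effective $E$ with $h^0(\mathcal O_X(E-C))=0$ one gets $P_a(E)\le h^0(\mathcal O_C(E))+h^0(\mathcal O_C(2C-E))+h^0(\mathcal O_X(C-E))+2C.E-10$, which evaluates to $0,0,1,3,6$ for $C.E=1,\dots,5$, and every splitting of $C.D=5$ or $6$ then violates the assumed lower bound on $P_a(D)$. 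Your version is symmetric in $D_1,D_2$, avoids the normalization step, and, as you observe, shows the hypothesis $h^1(\mathcal O_X(D))=0$ is superfluous for these cases; the paper's version keeps $\chi(\mathcal O_X(D_2))$ explicit, which is what it reuses in the remark following the proof to pin down the structure of the borderline decompositions for $P_a(D)=2C.D-8$, $C.D=5$ and $P_a(D)=2C.D-5$, $C.D=6$ (though equality analysis in your genus inequality would recover essentially the same information).
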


\begin{proof}


We suitably adapt the techniques used in Lemma $2.3$, \cite{W5} according to our situation. In what follows, We note down an important consequence of such $D$ being not $1$-connected. We work under the  assumption that $D^2 =2C.D-2k$ for some positive integer $k\footnote{\text{Observe that, when} $P_a(D)=2C.D-r$, \text{then we always have this expression for some integer} $k$.}$.\

Assume that $D$ is not $1$-connected. Let $D=D_1+D_2$ be a non-trivial effective decomposition with $D_1.D_2 \leq 0$. Since $D^2=D.D_1+D.D_2$, we may assume that $D.D_1 =D^2-D.D_2 \leq \frac{D^2}{2}$. Then we have, $\frac{D^2}{2} \leq D.D_2=D_2^2 +D_1.D_2 \leq D_2^2$. Note that,

\begin{align*}
4\chi(\mathcal O_X(D_2)) &= 4(\frac{1}{2}D_2(D_2-2C)+11)\\
                         &= 2D_2^2 -4C.D_2+44
\end{align*}
and 
\begin{align*}
4h^0(\mathcal O_X(D)) &= 4\chi(\mathcal O_X(D)) -4 h^0(\mathcal O_X(2C-D))\\
                      &= 2D^2-4 C.D +44 -4 h^0(\mathcal O_X(2C-D))
\end{align*}
Therefore,
\begin{align*}
4\chi(\mathcal O_X(D_2)) - 4h^0(\mathcal O_X(D)) & =(2D_2^2 -4C.D_2)-(2D^2-4C.D) + 4 h^0(\mathcal O_X(2C-D))\\
& \geq D^2-4C.D_2 +4k +4h^0(\mathcal O_X(2C-D))\\
& =2C.D-2k-4C.D_2 +4k +4h^0(\mathcal O_X(2C-D))\\
& =2C.D_1-2C.D_2 +2k +4h^0(\mathcal O_X(2C-D)) 
\end{align*}

This in turn gives us:

\begin{align}\label{3.6}
\chi(\mathcal O_X(D_2)) \geq h^0(\mathcal O_X(D)) +h^0(\mathcal O_X(2C-D)) +\frac{1}{2}(C.D_1-C.D_2 +k) \
\end{align}



With inequality \ref{3.6} at hand, we now analyze what happens in each case if we start with the assumption  $h^1(\mathcal O_X(-D)) \neq 0$. Note that,  by Corollary \ref{CON1}, this assumption implies $D$ is not $1$-connected, which enables us to apply the inequality \ref{3.6}. Therefore, to conclude the Proposition, it is enough to obtain contradiction in both the cases.\

$(i)$ Let's consider \textbf{$P_a(D)=2C.D-r, C.D=5$},  where $r \in \{5,6,7\}$\

Note that, in this situation, one has  $D^2=8-2r$ and hence $D^2 =2C.D-2k$ for $k=r+1$. From \ref{3.6}, there exist divisors $D_1,D_2$ on $X$ with $D=D_1+D_2$ such that  $\chi(\mathcal O_X(D_2)) \geq 10-r +\frac{1}{2}(C.D_1-C.D_2 +k)$. Assume that $C.D_1 < C.D_2$. Since, $C.D_1 \geq 1$, we have $C.D_2 \geq 2$. In this situation the only possibilities are either $C.D_1= 1, C.D_2=4$ or $C.D_1=2, C.D_2=3$.  Consider the case $C.D_1=1, C.D_2=4$. Since, $C.D_1-C.D_2 +k=r-2$, one obtains $h^0(\mathcal O_X(D_2)) +h^0(\mathcal O_X(2C-D_2)) \geq \chi(\mathcal O_X(D_2)) \geq 9-\frac{r}{2}$. Applying long exact cohomology sequence to the S.E.S \ref{e2} and noting $h^0(\mathcal O_X(D_2)) \leq h^0(\mathcal O_C(D_2)) \leq 1$, we have $1 +h^0(\mathcal O_C(2C-D_2))+h^0(\mathcal O_X(C-D_2))  \geq 9-\frac{r}{2} $. Since $C.D_2$ is  $4$, by Corollary \ref{wH1}, we have $h^0(\mathcal O_C(2C-D_2)) \leq 3$ and by Remark \ref{R1}, we have $h^0(\mathcal O_X(C-D_2))\leq 1$. This forces $5 \geq 1 +h^0(\mathcal O_C(2C-D_2))+h^0(\mathcal O_X(C-D_2)) \geq 9-\frac{r}{2}$, a contradiction. Consider the case $C.D_1=2, C.D_2=3$. Since, $C.D_1-C.D_2 +k=r$, as before, one obtains $h^0(\mathcal O_C(D_2)) +h^0(\mathcal O_C(2C-D_2))+h^0(\mathcal O_X(C-D_2)) \geq 10-\frac{r}{2}$. Since $C.D_2$ is  $3$, by Corollary \ref{wH1}, we have $h^0(\mathcal O_C(2C-D_2)) \leq 3$ and by Remark \ref{R1}, we have $h^0(\mathcal O_X(C-D_2))\leq 1$. This forces $5 \geq 1 +h^0(\mathcal O_C(2C-D_2))+h^0(\mathcal O_X(C-D_2)) \geq 10-\frac{r}{2}$, a contradiction. Now consider the case $C.D_2 \leq C.D_1$. In this situation the only possibilities are either $C.D_1= 4, C.D_2=1$ or $C.D_1=3, C.D_2=2$.  Again as before, we have, $1 +h^0(\mathcal O_C(2C-D_2))+h^0(\mathcal O_X(C-D_2)) \geq 10-r +\frac{1}{2}(C.D_1-C.D_2 +k)$. Consider the case, $C.D_2=2, C.D_1 =3$, then we have $C.D_1-C.D_2 +k =r+2$ and therefore, one obtains $1+h^0(\mathcal O_X(C-D_2)) +h^0(\mathcal O_C(2C-D_2))  \geq 11-\frac{r}{2}$. By Remark \ref{R1}, $h^0(\mathcal O_X(C-D_2)) \leq 1$ and by Corollary \ref{wH1}, we have $h^0(\mathcal O_C(2C-D_2)) \leq 4$. But this then means $6 \geq 1 +h^0(\mathcal O_C(2C-D_2))+h^0(\mathcal O_X(C-D_2)) \geq 11-\frac{r}{2}$, a contradiction. Finally consider the case, $C.D_2=1, C.D_1 =4$, then we have $C.D_1-C.D_2 +k=r+4$ and therefore, one obtains $1 +h^0(\mathcal O_C(2C-D_2))+h^0(\mathcal O_X(C-D_2)) \geq 12-\frac{r}{2}$. By Remark \ref{R1}, $ h^0(\mathcal O_X(C-D_2)) \leq 2$ and by Corollary \ref{wH1}, we have $h^0(\mathcal O_C(2C-D_2)) \leq 5$. This means $8 \geq 1 +h^0(\mathcal O_C(2C-D_2))+h^0(\mathcal O_X(C-D_2)) \geq 12-\frac{r}{2}$, a contradiction.\\

To establish $(ii)$  Let's consider \textbf{$P_a(D)=2C.D-r, C.D=6$}, where $r=4$. Note that, in this situation, one has  $D^2=10-2r$ and hence $D^2 =2C.D-2k$,  for $k=r+1$. From \ref{3.6}, there exist divisors $D_1,D_2$ on $X$ with $D=D_1+D_2$ such that  $\chi(\mathcal O_X(D_2)) \geq 10-r +\frac{1}{2}(C.D_1-C.D_2 +k)$. Assume that $C.D_1 < C.D_2$. Since, $C.D_1 \geq 1$, we have $C.D_2 \geq 2$. In this situation the only possibilities are either $C.D_1= 1, C.D_2=5$ or $C.D_1=2, C.D_2=4$.  Consider the case $C.D_1=1, C.D_2=5$. Since, $C.D_1-C.D_2 +k=r-3$, one obtains $h^0(\mathcal O_X(D_2)) +h^0(\mathcal O_X(2C-D_2)) \geq \chi(\mathcal O_X(D_2)) \geq 8.5-\frac{r}{2}$. Applying long exact cohomology sequence to the S.E.S \ref{e2} and noting $h^0(\mathcal O_X(D_2)) \leq h^0(\mathcal O_C(D_2)) \leq 2$, we have $2 +h^0(\mathcal O_C(2C-D_2))+h^0(\mathcal O_X(C-D_2))  \geq 8.5-\frac{r}{2} $. Since $C.D_2$ is  $5$, by Corollary \ref{wH1}, we have $h^0(\mathcal O_C(2C-D_2)) \leq 3$ and by Remark \ref{R1}, we have $h^0(\mathcal O_X(C-D_2))\leq 1$. This forces $6 \geq 2 +h^0(\mathcal O_C(2C-D_2))+h^0(\mathcal O_X(C-D_2)) \geq 8.5-\frac{r}{2}$, a contradiction. Consider the case $C.D_1=2, C.D_2=4$. Since, $C.D_1-C.D_2 +k=r-1$, as before, one obtains $h^0(\mathcal O_C(D_2)) +h^0(\mathcal O_C(2C-D_2))+h^0(\mathcal O_X(C-D_2)) \geq 9.5-\frac{r}{2}$. Since $C.D_2$ is  $4$, by Corollary \ref{wH1}, we have $h^0(\mathcal O_C(2C-D_2)) \leq 3$ and by Remark \ref{R1}, we have $h^0(\mathcal O_X(C-D_2))\leq 1$. This forces $5 \geq 1 +h^0(\mathcal O_C(2C-D_2))+h^0(\mathcal O_X(C-D_2)) \geq 9.5-\frac{r}{2}$, a contradiction. Now consider the case $C.D_2 \leq C.D_1$.  In this situation the only possibilities are either $C.D_1= 5, C.D_2=1$ or $C.D_1=4, C.D_2=2$ or $C.D_1=3, C.D_2=3$ .  Again as before, we have, $1 +h^0(\mathcal O_C(2C-D_2))+h^0(\mathcal O_X(C-D_2)) \geq 10-r +\frac{1}{2}(C.D_1-C.D_2 +k)$. Consider the case, $C.D_2=2, C.D_1 =4$, then we have $C.D_1-C.D_2 +k =r+3$ and therefore, one obtains $1+h^0(\mathcal O_X(C-D_2)) +h^0(\mathcal O_C(2C-D_2))  \geq 11.5-\frac{r}{2}$. By Remark \ref{R1}, $h^0(\mathcal O_X(C-D_2)) \leq 1$ and by Corollary \ref{wH1}, we have $h^0(\mathcal O_C(2C-D_2)) \leq 4$. But this then means $6 \geq 1 +h^0(\mathcal O_C(2C-D_2))+h^0(\mathcal O_X(C-D_2)) \geq 11.5-\frac{r}{2}$, a contradiction. Next consider the case, $C.D_2=1, C.D_1 =5$, then we have $C.D_1-C.D_2 +k=r+5$ and therefore, one obtains $1 +h^0(\mathcal O_C(2C-D_2))+h^0(\mathcal O_X(C-D_2)) \geq 12.5-\frac{r}{2}$. By Remark \ref{R1}, $ h^0(\mathcal O_X(C-D_2)) \leq 2$ and by Corollary \ref{wH1}, we have $h^0(\mathcal O_C(2C-D_2)) \leq 5$. This means $8 \geq 1 +h^0(\mathcal O_C(2C-D_2))+h^0(\mathcal O_X(C-D_2)) \geq 12.5-\frac{r}{2}$, a contradiction. Finally consider the case, $C.D_2=3, C.D_1 =3$, then we have $C.D_1-C.D_2 +k=r+1$ and therefore, one obtains $1 +h^0(\mathcal O_C(2C-D_2))+h^0(\mathcal O_X(C-D_2)) \geq 10.5-\frac{r}{2}$. By Remark \ref{R1}, $ h^0(\mathcal O_X(C-D_2)) \leq 1$ and by Corollary \ref{wH1}, we have $h^0(\mathcal O_C(2C-D_2)) \leq 3$. This means $5 \geq 1 +h^0(\mathcal O_C(2C-D_2))+h^0(\mathcal O_X(C-D_2)) \geq 10.5-\frac{r}{2}$, a contradiction.\\

\begin{remark}
From the strategy of the above proof it can be seen that for case-$(ii)$, if we consider $r=5$, then we get contradiction in each case except the case $C.D_1=1, C.D_2=5$. In this situation it can be deduced that $D_i^2=0$ for $i \in \{1,2\}$ and $D_1.D_2=0$. For Case-$(i)$, if we consider $r=8$, then we get contradiction except in two situations. The first such situation is when  $C.D_1=1, C.D_2=4$. In this case again one obtains $D_i^2=-4$ for $i \in \{1,2\}$ and $D_1.D_2=0$. In second situation we have $C.D_1=4, C.D_2=1$. In this case we have $D_i^2=-4$ for $i \in \{1,2\}$ and $D_1.D_2=0$. In this situation it can also be noticed that $D_2$ is an ACM line bundle of type $(ii)$ (i.e., satisfies the condition $(ii)$ of the promised Theorem \ref{T1} and is ACM and initialized).

\end{remark}

\end{proof}

With Proposition \ref{VAN} at hand we are now ready to prove the ACMness part of the if direction of the promised Theorem \ref{T1}. Before proceeding further we would like to briefly describe our strategy of proving ACMness (In the context of Corollary \ref{C1}) in $6$ steps.\

\underline{\textbf{Step-$(I)$}} : We use Remark \ref{R1} (and occasionally Corollary \ref{wH1}) to obtain an upperbound of $h^0(\mathcal O_C(D))$ and hence an upperbound of $h^0(\mathcal O_X(D))$ by S.E.S \ref{d1} (Here in some situations for convenience we may first start with an upperbound of $h^0(\mathcal  O_C(2C-D))$ and use the same technique as mentioned here to obtain precise cohomological values). By relation \ref{K} this gives us a lowerbound of $h^0(\mathcal O_X(2C-D))$ and hence an lowerbound of $h^0(\mathcal O_C(2C-D))$ (by S.E.S \ref{e2}). Then by assumption or Corollary \ref{wH1} or by Remark \ref{R1}, we obtain precise values of $h^0(\mathcal O_X(D)) =h^0(\mathcal O_C(D)), h^0(\mathcal O_C(2C-D)), h^0(\mathcal O_X(2C-D))$. This values when substituted to the relation \ref{K} give us $h^1(\mathcal O_X(D))=h^1(\mathcal O_X(2C-D))=0$. We here mention that this is the most important step towards proving ACMness.\

\underline{\textbf{Step-$(II)$}} : We apply long exact cohomology sequence to the S.E.S \ref{d1}. Then from the initializedness and step-$(I)$, we have $h^1(\mathcal O_X(3C-D))=h^1(\mathcal O_X(D-C))=0$ (Note that, from step-$(I)$, we will always have $h^0(\mathcal O_X(D))=h^0(\mathcal O_C(D))$).\

\underline{\textbf{Step-$(III)$}} : We apply long exact cohomology sequence to the S.E.S \ref{s1}. Then from step-$(II)$ and  $h^0(\mathcal O_C(D-C))=0$, one obtains $h^1(\mathcal O_X(4C-D))=h^1(\mathcal O_X(D-2C))=0$.\

\underline{\textbf{Step-$(IV)$}} :  We apply long exact cohomology sequence to the S.E.S \ref{m1}. Then from step-$(III)$ and  $h^0(\mathcal O_C(D-2C))=0$, one obtains $h^1(\mathcal O_X(5C-D))=h^1(\mathcal O_X(D-3C))=0$.\

\underline{\textbf{Step-$(V)$}} :  We apply long exact cohomology sequence to the S.E.S \ref{e2}. Then using the informations obtained in step-$(I)$, we obtain $h^1(\mathcal O_X(C-D))=0$.\

\underline{\textbf{Step-$(VI)$}} : If we have $h^0(\mathcal O_C(C-D))=h^0(\mathcal O_X(C-D))$, then we apply long exact cohomology sequence to the S.E.S \ref{TES} and use step-$(V)$ to see $h^1(\mathcal O_X(-D))=0$ or else the vanishing of $h^1(\mathcal O_X(-D))$ will follow from Proposition \ref{VAN}.\\

Also note that, for establishing ACMness, by Corollary \ref{C1}, irrespective of the value of the arithmetic genus of $D$ we have the following :

\begin{itemize}

\item For $C.D \leq 5$, it's  enough to carry out steps-$(I),(II),(V),(VI)$.\

\item For $6 \leq C.D \leq 11$, it's  enough to carry out all steps except step-$(IV)$.\

\item  For $12 \leq C.D \leq 15$, we need to carry out all the $6$ steps.\

\end{itemize}

\textit{\textbf{ ACMness}
 }

Consider  \underline{\textbf{$P_a(D)=2C.D-2.$}}

Let $C.D=1$. Then $D$ is reduced and irreducible and therefore, step-$(VI)$ is done. By Remark \ref{R1}, initializedness and S.E.S \ref{d1}, \ref{TES}, we have  $h^0(\mathcal O_C(D))=h^0(\mathcal O_X(D))=1, h^0(\mathcal O_X(C-D)) \leq 2$. Now from the relation \ref{K}, one has $h^0(\mathcal O_X(2C-D)) \geq 7$.  From the S.E.S \ref{e2}, we have $h^0(\mathcal O_X(2C-D)) \leq h^0(\mathcal O_C(2C-D))+h^0(\mathcal O_X(C-D))$.  If $h^0(\mathcal O_C(2C-D))$ is $ \geq 6$, then $\text{Cliff}(\mathcal O_C(2C-D)) \leq 1$, a contradiction to Lemma \ref{gon}. This means $h^0(\mathcal O_C(2C-D))=5$ and hence $h^0(\mathcal O_X(2C-D))=7$, $h^0(\mathcal O_X(C-D))=h^0(\mathcal O_C(C-D))=2$. Then from relation \ref{K}, we see that step-$(I)$ is completed and hence step-$(V)$ is also done. It's clear that step-$(II)$ follows from step-$(I)$ and initializedness. \\

Consider  \underline{\textbf{$P_a(D)=2C.D-4.$}}

Let $C.D=2$. Again as before by Remark \ref{R1}, initializedness and S.E.S \ref{d1}, \ref{TES},  we have $h^0(\mathcal O_C(D))=h^0(\mathcal O_X(D))=1, h^0(\mathcal O_X(C-D)) \leq 1$. By the same argument as in previous case Step-$(I)$, one can obtain $h^0(\mathcal O_C(2C-D)) \geq 4$. By Corollary \ref{wH1}, this means  $h^0(\mathcal O_C(2C-D)) = 4$.  From S.E.S \ref{e2}, one sees that, $ h^0(\mathcal O_X(C-D))=h^0(\mathcal O_C(C-D))=1, h^0(\mathcal O_X(2C-D))=5$.  Then from relation \ref{K}, step-$(I)$ is completed. Steps-$(II),(V)$ is done by the same arguments as in the previous case. Since from step-$(I)$ we have $h^0(\mathcal O_X(C-D))=h^0(\mathcal O_C(C-D))$, step-$(VI)$ is done using step-$(V)$. \\

Let $C.D=5$.  Note that, here we have $h^0(\mathcal O_C(D)) \leq 2, h^0(\mathcal O_C(C-D))\leq 1$ (by Remark \ref{R1}). By the same arguments as in previous case step-$(I)$, one can obtain $h^0(\mathcal O_C(2C-D)) \geq 3$. By Corollary \ref{wH1}, this means  $h^0(\mathcal O_C(2C-D)) = 3$.  From S.E.S \ref{e2}, one sees that, $ h^0(\mathcal O_X(C-D))=h^0(\mathcal O_C(C-D))=1, h^0(\mathcal O_X(2C-D))=4$. This forces by relation \ref{K} and S.E.S \ref{TES}, $h^0(\mathcal O_X(D))=h^0(\mathcal O_C(D))=2$. Then from relation \ref{K}, step-$(I)$ is completed. The arguments for steps-$(II),(V),(VI)$ are same as that of previous case (i.e, the case $C.D=2$).\\

Let $C.D=6$.  From Remark \ref{R1}, we have $h^0(\mathcal O_C(D)) \leq 3$. Note that, in this case $|C-D| =\emptyset$ (Otherwise $\mathcal O_X(D) \cong \mathcal O_X(C)$, a contradiction). Then by the same argument as in previous case step-$(I)$, one can obtain $h^0(\mathcal O_C(2C-D)) \geq 3$. By Remark \ref{R1}, this means  $h^0(\mathcal O_C(2C-D)) =3$.  From S.E.S \ref{e2}, one sees that, $ h^0(\mathcal O_X(2C-D))= h^0(\mathcal O_C(2C-D))= 3$. This forces from relation \ref{K} and S.E.S \ref{TES}, $h^0(\mathcal O_X(D))=h^0(\mathcal O_C(D))=3$. Then from relation \ref{K}, step-$(I)$ is completed. The arguments for steps-$(II),(V)$ are same as that of previous case (i.e the case $C.D=2$). Step-$(III)$ follows from step-$(II)$ and the assumption $(iii)(b)$ first part. Step-$(VI)$ follows from Proposition \ref{VAN}.\\

Let $12 \leq C.D \leq 15$.  Note that, in this situation we have, $C.(2C-D)<0$, for $13 \leq C.D \leq 15$ and for $C.D=12$, we have $|2C-D| = \emptyset$ (from assumption $(iii)(c)$ first part and S.E.S \ref{e2}). This means $h^0(\mathcal O_C(D)) \geq 6$ (by relation \ref{K} and S.E.S \ref{d1}).
From Corollary \ref{wH1} and from assumption $(iii)(c)$ second part we have $h^0(\mathcal O_C(D))=h^0(\mathcal O_X(D))=6$. Then from relation \ref{K}, step-$(I)$ is completed. From step-$(I)$ one can see that step-$(II)$ follows. Step-$(III)$ follows from step-$(II)$ and assumption $(iii)(b)$ first part. Step-$(IV)$ follows from Step-$(III)$ and assumption $(iii)(b)$ second  part. Step-$(V)$ follows from step-$(I)$ and from the facts $|C-D| =\emptyset, h^0(\mathcal O_X(2C-D))=h^0(\mathcal O_C(2C-D))=0$. Finally step-$(VI)$ follows from step-$(V)$ and the observation $C.(C-D) <0$.\\

Consider \underline{\textbf{$P_a(D)=2C.D-5$.}}

Let $3 \leq C.D \leq 4$. Here we have by Remark \ref{R1}, S.E.S \ref{d1}, S.E.S \ref{TES} and initializedness, $h^0(\mathcal O_C(D))=h^0(\mathcal O_X(D))=1, h^0(\mathcal O_X(C-D)) \leq 1$. Then using the relation \ref{K} and applying long exact cohomology sequence to the S.E.S \ref{e2}, we have 
$h^0(\mathcal O_C(2C-D)) \geq 3$.  Corollary \ref{wH1} forces $h^0(\mathcal O_C(2C-D))=3, h^0(\mathcal O_X(C-D))=h^0(\mathcal O_C(C-D))=1, h^0(\mathcal O_X(2C-D))=4$. Then from the relation \ref{K} step-$(I)$ is done. Step-$(II)$ follows from step-$(I)$ and the observation mentioned in the beginning of this paragraph. It's easy to see that step-$(V)$ follows from step-$(I)$ (and the cohomological information obtained therein) and step-$(VI)$ follows from step-$(I)$ and step-$(V)$.\\
 


Let $ C.D=5 $.  By Remark \ref{R1}, we have $h^0(\mathcal O_C(D)) \leq 2$. Note that, by Lemma \ref{L2}, one has $|C-D|= \emptyset$. Then using the relation \ref{K} and S.E.S  \ref{e2}, \ref{d1}, we have $h^0(\mathcal O_C(2C-D)) \geq 3$.  Corollary \ref{wH1} forces $h^0(\mathcal O_C(2C-D))=h^0(\mathcal O_X(2C-D))=3, h^0(\mathcal O_X(D))=h^0(\mathcal O_C(D))=2$. Then from  relation \ref{K} step-$(I)$ is done. Steps-$(II),(V)$ follows from step-$(I)$ and the cohomological data obtained there.  Step-$(VI)$ follows from the Proposition \ref{VAN}.\\


Let $ C.D=6 $. Again we have by Remark \ref{R1}, $h^0(\mathcal O_C(D)) \leq 3$. Note that, we  have,  $|C-D|= \emptyset$. Then using the relation \ref{K} and S.E.S  \ref{e2}, \ref{d1}, we have $h^0(\mathcal O_C(2C-D)) \geq 2$. It can be easily deduced that $h^0(\mathcal O_X(D))\neq 1$. Then assumption $(iv)(c)$ first part, the abovementioned S.E.S's and relation \ref{K} force that we have either $h^0(\mathcal O_C(2C-D))=h^0(\mathcal O_X(2C-D))=3, h^0(\mathcal O_X(D))=h^0(\mathcal O_C(D))=2$ or $h^0(\mathcal O_C(2C-D))=h^0(\mathcal O_X(2C-D))=2, h^0(\mathcal O_X(D))=h^0(\mathcal O_C(D))=3$. Then from the relation \ref{K}, step-$(I)$ is done. Arguments for steps-$(II), (V)$ is same as that of the case $C.D=5$. Step-$(III)$ follows from step-$(II)$ and assumption $(iv)(b)$ first part. It can be seen that step-$(VI)$ follows from step-$(V)$ and assumption $(iv)(c)$ second part.\\


Let $ C.D=7 $. In this case we have by Corollary \ref{wH1}, $h^0(\mathcal O_C(D)) \leq 3$ . Then using the relation \ref{K} and S.E.S  \ref{e2}, \ref{d1}, one has $h^0(\mathcal O_C(2C-D)) \geq 2   $.   Remark \ref{R1} forces $h^0(\mathcal O_C(2C-D))=h^0(\mathcal O_X(2C-D))=2$ and hence from relation \ref{K} and S.E.S \ref{d1}, one sees that $h^0(\mathcal O_X(D))=h^0(\mathcal O_C(D))=3$. Then from  relation \ref{K} step-$(I)$ is done. Again the arguments for steps-$(II), (V)$ is same as that of the case $C.D=6$. Step-$(III)$ follows from step-$(II)$ and assumption $(iv)(b)$ first part. Finally note that, step-$(VI)$ follows from step-$(V)$ and the observation that $C.(C-D)<0$.\\



Let $ C.D=10$. In this case we have by Remark \ref{R1}, $h^0(\mathcal O_C(2C-D)) \leq 1$ . Then using the relation \ref{K} and S.E.S  \ref{e2}, we have $h^0(\mathcal O_X(D)) \geq 4   $.  Corollary \ref{wH1} and S.E.S \ref{d1} forces $h^0(\mathcal O_C(2C-D))=h^0(\mathcal O_X(2C-D))=1, h^0(\mathcal O_X(D))=h^0(\mathcal O_C(D))=4$. Then from the relation \ref{K} step-$(I)$ is done. The arguments for the remaining cases are same as the case $C.D=7$.\\


Let $ C.D=11$.  By assumption $(iv)(b)$ second part, \ref{K} and S.E.S  \ref{e2}, we have $h^0(\mathcal O_X(D)) \geq 5   $.  Corollary \ref{wH1} and S.E.S \ref{d1} forces $ h^0(\mathcal O_X(D))=h^0(\mathcal O_C(D))=5$. Then from the relation \ref{K} step-$(I)$ is done. The arguments for the remaining cases are same as the case $C.D=10$.\\


Finally let $12 \leq C.D \leq 14$. Note that, in each such situation one has $h^0(\mathcal O_C(2C-D))=0$. Then by \ref{K} and S.E.S  \ref{e2}, we have $h^0(\mathcal O_X(D)) \geq 5   $.  Assumption $(iv)(d)$ and S.E.S \ref{d1} forces $ h^0(\mathcal O_X(D))=h^0(\mathcal O_C(D))=5$. Then from the relation \ref{K} step-$(I)$ is done. Step-$(II)$ follows from step-$(I)$ and the cohomological equalities obtained there. Step-$(III)$ follows from step-$(II)$ and assumption $(iv)(b)$ first part. Step-$(IV)$ follows from step-$(III)$ and  the observation $h^0(\mathcal O_C(D))<6$. It's easy to see that step-$(V)$ follows from step-$(I)$ and assumption $(iv)(b)$ second part. Finally step-$(VI)$ follows by the same reason as $C.D=10$.\\\

Consider  \underline{\textbf{$P_a(D)=2C.D-6$.}}

Let $3 \leq C.D \leq 4$. As before by Remark \ref{R1}, S.E.S \ref{d1}, S.E.S \ref{TES} and initializedness, $h^0(\mathcal O_C(D))=h^0(\mathcal O_X(D))=1, h^0(\mathcal O_X(C-D)) \leq 1$. From the relation \ref{K} and S.E.S \ref{e2}, we have $h^0(\mathcal O_C(2C-D)) \geq 2$. Note that, by Corollary \ref{wH1}, $h^0(\mathcal O_C(2C-D)) \leq 3$. Then one can deduce using the assumption $(v)(c)$ first part, relation \ref{K} and S.E.S \ref{e2} that either   
  we have $h^0(\mathcal O_X(C-D))=h^0(\mathcal O_C(C-D)) =1$ and $h^0(\mathcal O_X(2C-D))=3, h^0(\mathcal O_C(2C-D)) = 2$ or
   $h^0(\mathcal O_X(C-D))=h^0(\mathcal O_C(C-D)) =0$ and $h^0(\mathcal O_X(2C-D))= h^0(\mathcal O_C(2C-D)) = 3$. Then from the relation \ref{K} step-$(I)$ is completed. The remaining steps follow by the same arguments as that of the case $P_a(D)=2C.D-5, 3 \leq C.D \leq 4$.\\

Let $ C.D=5 $.  By Remark \ref{R1}, we have $h^0(\mathcal O_C(D)) \leq 2$. Note that, by Lemma \ref{L2}, one has $|C-D|= \emptyset$. Then using the relation \ref{K} and S.E.S  \ref{e2}, \ref{d1}, we have $h^0(\mathcal O_C(2C-D)) \geq 2$. By  Corollary \ref{wH1}, relation \ref{K} and assumption $(v)(f)$ first part, one can have either  $h^0(\mathcal O_C(2C-D))=h^0(\mathcal O_X(2C-D))=3, h^0(\mathcal O_X(D))=h^0(\mathcal O_C(D))=1$ or $h^0(\mathcal O_C(2C-D))=h^0(\mathcal O_X(2C-D))=2, h^0(\mathcal O_X(D))=h^0(\mathcal O_C(D))=2$. Then from the relation \ref{K} step-$(I)$ is done. The remaining steps follow by the same arguments as that of the case $P_a(D)=2C.D-5, C.D=5$.\\

Let $ C.D=6 $. As before by Remark \ref{R1}, one has $h^0(\mathcal O_C(D)) \leq 3$. Note that, we  have,  $|C-D|= \emptyset$. Then using the relation \ref{K} and S.E.S  \ref{e2}, \ref{d1}, we have $h^0(\mathcal O_C(2C-D)) \geq 1$.  Assumption $(v)(e)$ (and the abovementioned S.E.S's) forces that we have either $h^0(\mathcal O_C(2C-D))=h^0(\mathcal O_X(2C-D))=3, h^0(\mathcal O_X(D))=h^0(\mathcal O_C(D))=1$ or $h^0(\mathcal O_C(2C-D))=h^0(\mathcal O_X(2C-D))=2, h^0(\mathcal O_X(D))=h^0(\mathcal O_C(D))=2$ or $h^0(\mathcal O_C(2C-D))=h^0(\mathcal O_X(2C-D))=1, h^0(\mathcal O_X(D))=h^0(\mathcal O_C(D))=3$ . Then from the relation \ref{K} step-$(I)$ is completed. Steps-$(II),(V)$ follow from step-$(I)$ and the cohomological observations obtained there. Step-$(III)$ follows from step-$(II)$ and assumption $(v)(b)$ first part. Then step-$(VI)$ follows from step-$(V)$ and assumption $(v)(c)$ second part.\\


Let $ C.D=7 $. In this case we have by Corollary \ref{wH1}, $h^0(\mathcal O_C(D)) \leq 3$. Then using the relation \ref{K} and S.E.S  \ref{e2}, \ref{d1}, we have $h^0(\mathcal O_C(2C-D)) \geq 1$.  Then Remark \ref{R1}, assumption $(v)(d)$, relation \ref{K} forces either $h^0(\mathcal O_C(2C-D))=h^0(\mathcal O_X(2C-D))=2, h^0(\mathcal O_X(D))=h^0(\mathcal O_C(D))=2$ or $h^0(\mathcal O_C(2C-D))=h^0(\mathcal O_X(2C-D))=1, h^0(\mathcal O_X(D))=h^0(\mathcal O_C(D))=3$. Then from the relation \ref{K} step-$(I)$ is done. Steps-$(II),(V)$ follow from step-$(I)$ and the observations mentioned there. Step-$(III)$ follows from step-$(II)$ and assumption $(v)(b)$ first part. Then step-$(VI)$ follows from step-$(V)$ and the observation that $C.(C-D)<0$.\\

Let $8 \leq C.D \leq 9 $. Then we have by Corollary \ref{wH1}, $h^0(\mathcal O_C(D)) \leq 3$. Then using Remark \ref{R1}, relation \ref{K} and S.E.S  \ref{e2}, one can see that $h^0(\mathcal O_X(D)) \leq 2$ isn't possible.  Then Remark \ref{R1}, relation \ref{K} yields $h^0(\mathcal O_C(2C-D))=h^0(\mathcal O_X(2C-D))=1, h^0(\mathcal O_X(D))=h^0(\mathcal O_C(D))=3$. Then from the relation \ref{K} step-$(I)$ is done. The arguments for the remaining cases are same as that of the previous case (i.e., $C.D=7$).\\

Let $ C.D=10$.  Then using the relation \ref{K} and S.E.S  \ref{e2} and the assumption $(v)(b)$ second part, we have $h^0(\mathcal O_X(D)) \geq 4   $.  Corollary \ref{wH1} and S.E.S \ref{d1} forces $ h^0(\mathcal O_X(D))=h^0(\mathcal O_C(D))=4$. Then from the relation \ref{K} step-$(I)$ is done. The arguments for the remaining cases are same as that of the  case $C.D=7$.\\

Let $ C.D=11$. In this case we have by assumption $(v)(b)$ second part, relation \ref{K} and S.E.S  \ref{e2},  $h^0(\mathcal O_X(D)) \geq 4   $.  Then assumption $(v)(e)$ second part and S.E.S \ref{d1} enable us to obtain  $ h^0(\mathcal O_X(D))=h^0(\mathcal O_C(D))=4$. Then from the relation \ref{K} step-$(I)$ is done. The arguments for the remaining cases are same as that of the  case $C.D=7$.\\

Finally let $12 \leq C.D \leq 13$. Note that, in each such situation one has $h^0(\mathcal O_C(2C-D))=0$. Then by \ref{K} and S.E.S  \ref{e2}, we have $h^0(\mathcal O_X(D)) \geq 4   $.  Assumption $(v)(f)$ second part and S.E.S \ref{d1} forces $ h^0(\mathcal O_X(D))=h^0(\mathcal O_C(D))=4$. Then from the relation \ref{K} step-$(I)$ is done. Step-$(II)$ follows from step-$(I)$ and the cohomological equality obtained there. Step-$(III)$ follows from step-$(II)$ and assumption $(v)(b)$ first part. Step-$(IV)$ follows from step-$(III)$ and  the observation $h^0(\mathcal O_C(D))<6$. It's easy to see that step-$(V)$ follows from step-$(I)$, assumption $(v)(b)$ second part (for $C.D=12$) and the observation $C.(2C-D)<0$ (for $C.D=13$). Finally step-$(VI)$ follows by the same reason as $C.D=10$.\\

Consider \underline{\textbf{$P_a(D)=2C.D-7$.}}

Let $C.D=4$. By Remark \ref{R1}, S.E.S \ref{d1} and initializedness it follows that $h^0(\mathcal O_C(D)) =h^0(\mathcal O_X(D))=1$.  Note that, one can use Lemma \ref{L3} to obtain $|C-D|= \emptyset$.  From the relation \ref{K} and S.E.S \ref{e2}, we have $h^0(\mathcal O_C(2C-D)) \geq 2$. Then one can deduce using the assumption $(vi)(g)$, relation \ref{K} and S.E.S \ref{e2}, $h^0(\mathcal O_X(2C-D))= h^0(\mathcal O_C(2C-D)) = 2$ . Then from  relation \ref{K} step-$(I)$ is completed. Step-$(II)$ follows from step-$(I)$ and the observation mentioned in the beginning of this paragraph. It's clear that step-$(V)$ follows from step-$(I)$ and step-$(VI)$ follows from assumption $(vi)(h)$ first part and step-$(V)$.\\

Let $ C.D=5 $. In this case we have by Remark \ref{R1}, $h^0(\mathcal O_C(D)) \leq 2$. Note that, by Lemma \ref{L2}, one has $|C-D|= \emptyset$. Then using the relation \ref{K} and S.E.S  \ref{e2}, we have $h^0(\mathcal O_C(2C-D)) \geq 1$. By  corollary \ref{wH1}, relation \ref{K} and assumption $(vi)(f)$  one can have either  $h^0(\mathcal O_C(2C-D))=h^0(\mathcal O_X(2C-D))=2, h^0(\mathcal O_X(D))=h^0(\mathcal O_C(D))=1$ or $h^0(\mathcal O_C(2C-D))=h^0(\mathcal O_X(2C-D))=1, h^0(\mathcal O_X(D))=h^0(\mathcal O_C(D))=2$. Then from the relation \ref{K} step-$(I)$ is done.  Steps-$(II),(V)$ follow by the same arguments as that of the case $P_a(D)=2C.D-5, C.D=5$. Step-$(VI)$ follows from step-$(V)$ and Proposition \ref{VAN}.\\

Let $ C.D=6 $. Then by Remark \ref{R1}, one has $h^0(\mathcal O_C(D)) \leq 3$. Note that, we  have,  $|C-D|= \emptyset$. Then using  assumption $(vi)(e)$, relation \ref{K}, S.E.S \ref{e2}, S.E.S \ref{d1},   we have either $h^0(\mathcal O_C(2C-D))=h^0(\mathcal O_X(2C-D))=0, h^0(\mathcal O_X(D))=h^0(\mathcal O_C(D))=3$ or $h^0(\mathcal O_C(2C-D))=h^0(\mathcal O_X(2C-D))=2, h^0(\mathcal O_X(D))=h^0(\mathcal O_C(D))=1$ or $h^0(\mathcal O_C(2C-D))=h^0(\mathcal O_X(2C-D))=1, h^0(\mathcal O_X(D))=h^0(\mathcal O_C(D))=2$ . Therefore, from the relation \ref{K} step-$(I)$ is done. Steps-$(II), (V)$ follow from step-$(I)$ and the observation obtained there. Step-$(III)$ follows from step-$(II)$ and assumption $(vi)(b)$ first part. Then step-$(VI)$ follows from step-$(V)$ and assumption $(vi)(h)$ first part.\\

Let $ C.D=7 $. In this case we have by Remark \ref{R1}, $h^0(\mathcal O_C(2C-D)) \leq 2$. Then using the relation \ref{K} and S.E.S  \ref{e2}, we have $h^0(\mathcal O_X(D)) \geq 1$.  Then Remark \ref{R1}, Corollary \ref{wH1}, assumption $(vi)(d)$, relation \ref{K},  forces either $h^0(\mathcal O_C(2C-D))=h^0(\mathcal O_X(2C-D))=2, h^0(\mathcal O_X(D))=h^0(\mathcal O_C(D))=1$ or $h^0(\mathcal O_C(2C-D))=h^0(\mathcal O_X(2C-D))=1, h^0(\mathcal O_X(D))=h^0(\mathcal O_C(D))=2$ or $h^0(\mathcal O_C(2C-D))=h^0(\mathcal O_X(2C-D))=0, h^0(\mathcal O_X(D))=h^0(\mathcal O_C(D))=3$. Thus from  relation \ref{K} step-$(I)$ is done. Steps-$(II), (V)$ follows by the same arguments as in the prvious case (i.e $C.D=6$). Step-$(III)$ follows from step-$(II)$ and assumption $(vi)(b)$ first part. Then step-$(VI)$ follows from step-$(V)$ and the observation that $C.(C-D)<0$.\\

Let $8 \leq C.D \leq 9 $. Then we have by Corollary \ref{wH1}, $h^0(\mathcal O_C(D)) \leq 3$. From Remark \ref{R1}, relation \ref{K} and S.E.S  \ref{e2}, one can see that $h^0(\mathcal O_X(D))= 1$ isn't possible.  Then Remark \ref{R1}, relation \ref{K}, assumption $(vi)(c)$ forces either $h^0(\mathcal O_C(2C-D))=h^0(\mathcal O_X(2C-D))=1, h^0(\mathcal O_X(D))=h^0(\mathcal O_C(D))=2$ or $h^0(\mathcal O_C(2C-D))=h^0(\mathcal O_X(2C-D))=0, h^0(\mathcal O_X(D))=h^0(\mathcal O_C(D))=3$. Therefore, from  relation \ref{K}, step-$(I)$ is done. The arguments for the remaining cases are same as that of the previous case(i.e $C.D=7$).\\


Let $ 10 \leq  C.D \leq 11$. In this case we have by assumption $(vi)(b)$ second part, relation \ref{K} and S.E.S  \ref{e2}, we have $h^0(\mathcal O_X(D)) \geq 3  $.  From assumption $(vi)(h)$ second part and S.E.S \ref{d1}, we have $ h^0(\mathcal O_X(D))=h^0(\mathcal O_C(D))=3$, whence step-$(I)$ is acheived from relation \ref{K}. The arguments for the remaining cases are same as that of the  case $C.D=7$.\\

Finally let $C.D=12$. In this case all the steps except step-$(IV)$ follow by the same argument as that of case $C.D=10$. It is easy to see that step-$(IV)$ follows from step-$(III)$ and  the observation $h^0(\mathcal O_C(D))<6$.\\

Consider \underline{\textbf{$P_a(D)=2C.D-8$.}}

Let $C.D=4$. As before by Remark \ref{R1}, S.E.S \ref{d1} and initializedness, one has $h^0(\mathcal O_C(D)) =h^0(\mathcal O_X(D))=1$  and by Lemma \ref{L3} one obtains $|C-D|= \emptyset$.  Then  relation \ref{K}, S.E.S \ref{e2} and the assumption $(vii)(d)$ first part enables us to deduce $h^0(\mathcal O_X(2C-D)) =h^0(\mathcal O_C(2C-D))=1$, whence  step-$(I)$ is completed by relation \ref{K}. Again as before, step-$(II)$ follows from initializedness, step-$(I)$ and the observation mentioned at the beginning of this paragraph. It is easy to see that step-$(V)$ follows from the cohomological information obtained in step-$(I)$ and step-$(VI)$ follows from the assumption $(vii)(c)$ first part  and step-$(V)$.\\

Let $ C.D=5 $. Then by Remark \ref{R1}, we have $h^0(\mathcal O_C(D)) \leq 2$. Again as in previous situations, by Lemma \ref{L2}, one has $|C-D|= \emptyset$. Then using  relation \ref{K}, S.E.S  \ref{e2}, S.E.S \ref{d1}, initializedness, assumptions $(vii)(b)$ first part and $(vii)(c)$ second part, we have either  $h^0(\mathcal O_C(2C-D))=h^0(\mathcal O_X(2C-D))=1, h^0(\mathcal O_X(D))=h^0(\mathcal O_C(D))=1$ or $h^0(\mathcal O_C(2C-D))=h^0(\mathcal O_X(2C-D))=0, h^0(\mathcal O_X(D))=h^0(\mathcal O_C(D))=2$. Then from  relation \ref{K} step-$(I)$ is done.  Steps-$(II),(V)$ follow by the same arguments as that of the case $P_a(D)=2C.D-5, C.D=5$. Step-$(VI)$ follows from step-$(V)$ and the assumption $(vii)(c)$ first part.\\

Let $ C.D=6 $.  By Remark \ref{R1} and assumption $(vii)(b)$ second part, one has $h^0(\mathcal O_C(D)) \leq 2$. Note that,  by initializedness, we have $|C-D|= \emptyset$. Then   relation \ref{K}, S.E.S \ref{e2}, S.E.S \ref{d1}, initializedness and assumption $(vii)(c)$ second part forces  either $h^0(\mathcal O_C(2C-D))=h^0(\mathcal O_X(2C-D))=0, h^0(\mathcal O_X(D))=h^0(\mathcal O_C(D))=2$ or $h^0(\mathcal O_C(2C-D))=h^0(\mathcal O_X(2C-D))=1, h^0(\mathcal O_X(D))=h^0(\mathcal O_C(D))=1$. Then from the relation \ref{K} step-$(I)$ is done. Step-$(II)$ follows from initializedness, step-$(I)$ and the cohomological observations obtained therein. Step-$(III)$ follows from step-$(II)$ and statement asserted in the second line of this paragraph. Again as in other cases step-$(V)$ follows from the cohomological values obtained in step-$(I)$ and step-$(VI)$ follows from step-$(V)$ and assumption $(vii)(c)$ first part.\\

Let $ C.D=7 $. In this situation we have by Corollary \ref{wH1} and assumption $(vii)(b)$ second part, we have $h^0(\mathcal O_C(D)) \leq 2$. Then  relation \ref{K} and S.E.S  \ref{e2},  S.E.S \ref{d1}, initializedness, Remark \ref{R1} and assumptions $(vii)(b)$ first part with $(vii)(c)$ second part forces either $h^0(\mathcal O_C(2C-D))=h^0(\mathcal O_X(2C-D))=1, h^0(\mathcal O_X(D))=h^0(\mathcal O_C(D))=1$ or $h^0(\mathcal O_C(2C-D))=h^0(\mathcal O_X(2C-D))=0, h^0(\mathcal O_X(D))=h^0(\mathcal O_C(D))=2$, whence step-$(I)$ is evident from relation \ref{K}. Step-$(II)$ follows from step-$(I)$ and the cohomological observations obtained in the previous line. Step-$(III)$ follows from step-$(II)$ and the upperbound mentioned in the second line of this paragraph. It's clear that Step-$(V)$ follows from step-$(I)$ and two sets of cohomological possibilities seen therein. Then step-$(VI)$ follows from step-$(V)$ and the fact that $C.(C-D)<0$.\\

Let $8 \leq C.D \leq 9 $. Note that, by Remark \ref{R1}, one has $h^0(\mathcal O_C(2C-D))\leq 1$. Then the rest of the arguments are the same as in the previous case (i.e., $C.D=7$). \\


Let $ 10 \leq  C.D \leq 11$. By $(vii)(d)$ second part, S.E.S \ref{d1}, initializedness, relation \ref{K} and S.E.S  \ref{e2}, one obtains $h^0(\mathcal O_C(2C-D))=h^0(\mathcal O_X(2C-D))=0, h^0(\mathcal O_X(D))=h^0(\mathcal O_C(D))=2$. Therefore, step-$(I)$ is completed by relation \ref{K}. Arguments for step-$(II),(V),(VI)$ are same as the case $C.D=7$. Finally step-$(III)$ follows from assumption $(vii)(d)$ second part.\\

Consider \underline{\textbf{$P_a(D)=2C.D-9 $.}}\

Note that, in each situation by assumption $(viii)(a)$, S.E.S \ref{d1}, initializedness, relation \ref{K} and S.E.S  \ref{e2}, one must have $h^0(\mathcal O_C(2C-D))=h^0(\mathcal O_X(2C-D))=0, h^0(\mathcal O_X(D))=h^0(\mathcal O_C(D))=1$. Then from relation \ref{K}, step-$(I)$ follows. Arguments for step-$(II),(V)$ are the same as before (i.e., case $P_a(D)=2C.D-8$, $C.D=7$).   For $5 \leq C.D \leq 6$, step-$(VI)$ follows from assumption $(viii)(b)$ and step-$(V)$. As earlier for $7 \leq C.D \leq 10$, step-$(VI)$ follows from step-$(V)$ and the observation that $C.(C-D)<0$. Lastly, for $6 \leq C.D \leq 10$, step-$(III)$ follows from step-$(II)$ and assumption $(viii)(a)$.

\end{proof}

We conclude this paper by pointing out a concrete example where we have $D$ is a non-zero effective divisor on some smooth sextic surface in $\mathbb P^3$ such that $P_a(D)=2C.D-9$, $C.D=5$ and $h^0(\mathcal O_X(D))=1$ but $h^0(\mathcal O_X(2C-D)) \neq 0$ (i.e., $\mathcal O_X(D)$ is not an ACM line bundle). This also gives us a naive understanding of why we are forced to impose the condition $h^0(\mathcal O_C(2C-D))=0$ in case $(viii)(a)$.\

\begin{example}(See \cite{Counterexample})
Let $A$ be a plane conic and $B$ be a plane cubic curve  in $\mathbb P^3$(lying in different planes) which meets transversally at $1$ point. Then it can be shown that there exists a smooth sextic surface in $\mathbb P^3$ containing $A$ and $B$ (say $X$). Denote $D:=A+B$. If $C$ is a hyperplane section of $X$, then we have $C.D=5$. Note that, $\text{deg}(K_A)= K_X.A+A^2$ gives us  $A^2=-6$. Similarly, one obtains $B^2=-6$. This in turn, by construction, means $D^2=-10$ and hence $P_a(D) =2C.D-9$ for $C.D=5$. The existence of non-trivial sections of $\mathcal O_X(2C-D)$ follows from construction.
\end{example}

\section{Acknowledgement}
 
 I thank Dr. Sarbeswar Pal for many  discussions and anonymous mathoverflow user abx for suggesting the article \cite{KC}.





\end{document}